\numberwithin{equation}{section}
\newtheorem{myDefn}{Definition}[section]
\newtheorem{myProp}[myDefn]{Proposition}
\newtheorem{myRem}[myDefn]{Remark}
\newtheorem{myExa}[myDefn]{Example}
\newtheorem{myLem}[myDefn]{Lemma}
\newtheorem{myTheorem}[myDefn]{Theorem}
\DeclareMathOperator*{\argmin}{argmin}
\def\nn{\mathrm{n}}
\def\R{\mathbb{R}}
\def\HH{\mathrm{H}}
\def\LL{\mathrm{L}}
\def\MD{\mathcal{D}}
\newcommand{\fonction}[5]{\begin{array}[t]{lrcl}#1 :&#2 &\longrightarrow &#3\\&#4& \longmapsto &#5 \end{array}}
\newcommand{\dual}[2]{\left\langle #1 , #2 \right\rangle}
\newcommand{\hookdoubleheadrightarrow}{%
  \hookrightarrow\mathrel{\mspace{-15mu}}\rightarrow
}
\newlist{primenumerate}{enumerate}{1}
\setlist[primenumerate,1]{label={\roman*$'$}}
\title{Sensitivity Analysis of a Scalar Mechanical Contact Problem with Perturbation of the Tresca’s Friction Law}
\author{Lo\"ic Bourdin\footnote{Institut de recherche XLIM. UMR CNRS 7252. Universit\'e de Limoges, France. \texttt{loic.bourdin@unilim.fr}}, 
Fabien Caubet\footnote{Universit\'e de Pau et des Pays de l'Adour, E2S UPPA, CNRS, LMAP, UMR 5142, 64000 Pau, France. \texttt{fabien.caubet@univ-pau.fr}}, Aymeric Jacob de Cordemoy\footnote{Universit\'e de Pau et des Pays de l'Adour, E2S UPPA, CNRS, LMAP, UMR 5142, 64000 Pau, France. \texttt{aymeric.jacob-de-cordemoy@univ-pau.fr}}
}
\begin{document}

\maketitle

\begin{abstract}
This paper investigates the sensitivity analysis of a scalar mechanical contact problem described by a boundary value problem involving the Tresca's friction law. The sensitivity analysis is performed with respect to right-hand source and boundary terms perturbations. In particular the friction threshold involved in the Tresca's friction law is perturbed, which constitutes the main novelty of the present work with respect to the existing literature. Hence we introduce a parameterized Tresca friction problem and its solution is characterized by using the proximal operator associated with the corresponding perturbed nonsmooth convex Tresca friction functional. Then, by invoking the extended notion of twice epi-differentiability depending on a parameter, we prove the differentiability of the solution to the parameterized Tresca friction problem, characterizing its derivative as the solution to a boundary value problem involving Signorini unilateral conditions. Finally numerical simulations are provided in order to illustrate our main result.
\end{abstract}

\bigskip

\textbf{Keywords:} mechanical contact problems, Tresca’s friction law, Signorini unilateral conditions, variational inequalities, convex subdifferential, proximal operator, sensitivity analysis, twice epi-differentiability.  

\medskip

\textbf{AMS Classification:} 49Q12, 46N10, 74M15.

\section{Introduction}
\paragraph{Mechanical context and motivations.} On the one hand, mathematical models for mechanical contact problems between deformable bodies are investigated in the literature in view of various engineering applications, such as the analysis of the wheel-ground contact for a vehicle, the study of the contact of a rocket structure with the atmosphere, etc. \textit{Contact mechanics} describes the deformation of solids that touch each other on parts of their boundaries. Mostly, the mechanical setting consists in a deformable body which is in contact with a rigid foundation without penetrating it and possibly sliding against it which causes friction. From the mathematical point of view, these phenomena translate into different constraints: the non-permeability conditions take the form of inequalities on the contact surface called {\it Signorini unilateral conditions} (see, e.g.,~\cite{15SIG,16SIG}); the friction occurring on the contact surface is typically modeled by the so-called {\it Tresca’s friction law} (see, e.g.,~\cite{KUSS}) which appears as a boundary condition involving nonsmooth inequalities depending on a friction threshold. Finally those mechanical contact problems are usually investigated through the theory of {\it variational inequalities}, and Signorini unilateral conditions and the Tresca's friction law cause nonlinearities and/or nonsmoothness in the corresponding variational formulations.

On the other hand, \textit{shape optimization} is the mathematical field aimed at finding the optimal shape of a given object for a given criterion, that is the shape which minimizes a certain cost functional while satisfying given constraints. In order to numerically solve a shape optimization problem, the standard gradient descent method requires to compute the {\it shape gradient} of the cost functional which usually depends on the solution to a partial differential equation with given boundary conditions. Therefore a first crucial point in numerical shape optimization is to perform the {\it sensitivity analysis} of the boundary value problem with respect to perturbations.

Naturally, mechanical contact problems are ubiquitous in shape optimization and increasingly popular in industry in order to identify the optimal design of a product, like for instance the optimal shape of a structure with the maximum stiffness or the minimum weight. The present work was initially motivated by shape optimization problems involving mechanical contact and friction phenomena. For this purpose, the objective of the present paper is to investigate the sensitivity analysis of a boundary value problem involving the Tresca's friction law with respect to right-hand source and boundary terms perturbations. Especially, the friction threshold associated with the Tresca's friction law is perturbed which is the crucial point and the main novelty of the present paper with respect to the existing literature. In this work we focus on the scalar version of the Tresca's friction law, constituting a first step towards the non-trivial adaptation to the elasticity case which will be the topic of future investigations.

\paragraph{Objectives and methodology.}
The sensitivity analysis of some mechanical contact problems has already been investigated in the literature. For example, the sensitivity analysis of some friction problems are studied using the notion of conical differentiability in~\cite{SOKOL2}, or with a regularization procedure in~\cite{HAN}. The present paper follows from the previous work~\cite{4ABC} in which a novel approach, based on the notion of twice epi-differentiability introduced by R.T.\ Rockafellar in 1985 (see~\cite{Rockafellar}), has been developed. Precisely we focus on the \textit{parameterized Tresca friction problem} given by
\begin{equation}\label{justeintro}\tag{TP$_{t}$}
\arraycolsep=2pt
\left\{
\begin{array}{rcll}
-\Delta u_{t} &=&f_{t}    & \text{ in } \Omega , \\
u_{t}	 &=&  0  & \text{ on } \Gamma_{\mathrm{D}} ,\\
\partial_{\nn} u_{t} &=&k_{t} 	  & \text{ on } \Gamma_{\mathrm{N}} ,\\
|\partial_{\nn}u_{t}|\leq g_{t} \text{ and }  u_{t}\partial_{\nn}u_{t}+g_{t}|u_{t}|&=&0  & \text{ on } \Gamma_{\mathrm{T}},
\end{array}
\right.
\end{equation}
for all $t\geq0$, where $\Omega \subset \R^{d}$ is a nonempty bounded connected open subset of $\R^{d}$, $d\geq1$, with a Lipschitz continuous boundary $\Gamma:=\partial\Omega$. We assume that the boundary is decomposed as follows~$\Gamma=:\Gamma_{\mathrm{D}}\cup\Gamma_{\mathrm{N}}\cup\Gamma_{\mathrm{T}}$, where $\Gamma_{\mathrm{D}}$, $\Gamma_{\mathrm{N}}$, $\Gamma_{\mathrm{T}}$ are three measurable pairwise disjoint subsets of $\Gamma$ such that $\Gamma_{\mathrm{D}}$ and $\Gamma_{\mathrm{T}}$ have a positive measure and almost every point of $\Gamma_{\mathrm{T}}$ is an interior point (see Remark~\ref{rmqhyptresca} for details). Moreover we assume that $f_{t}\in\LL^{2}(\Omega)$, $k_{t}\in\LL^{2}(\Gamma_{\mathrm{N}})$, and $g_{t}\in\LL^{2}(\Gamma_{\mathrm{T}})$ with $g_{t}>0$ almost everywhere (a.e.) on $\Gamma_{\mathrm{T}}$, for all $t\geq0$. The boundary condition that appears on $\Gamma_{\mathrm{T}}$ corresponds to the scalar version of the Tresca's friction law (see, e.g.,~\cite[Section~5 Chapter~2]{GLOW1} or~\cite[Section~1.3 Chapter~1]{GLOW2}). The main difference with the previous paper~\cite{4ABC} is that the friction threshold in the Tresca's friction law, denoted by $g_{t}$, depends on the parameter~$t \geq 0$ (while~$g_t = g$ does not in~\cite{4ABC}). Although this change may seem innocent, we emphasize that this work is not a simple replica of the previous paper~\cite{4ABC}. Indeed this novelty implies several non-trivial technical adjustments. In particular it requires an extended version of the notion of twice epi-differentiability, as explained below.

The main purpose of this work is to characterize the derivative of the map $t\in\mathbb{R}^{+}\mapsto u_{t}\in\HH^{1}_{\mathrm{D}}(\Omega)$ at $t=0$, where
$$
 \HH^{1}_{\mathrm{D}}(\Omega):=\left\{ \varphi\in\HH^{1}(\Omega) \text{, } \varphi=0 \text{ almost everywhere on } \Gamma_{\mathrm{D}}\right\}.
$$
For $t\geq0$ fixed, the main difficulty in the analysis of the parameterized Tresca friction problem~\eqref{justeintro} comes from the absolute value map $\left|\cdot\right|$ on the boundary~$\Gamma_{\mathrm{T}}$ which generates nonsmooth terms in the corresponding variational formulation, that is to find~$u_{t}\in\HH^{1}_{\mathrm{D}}(\Omega)$ such that
$$
     \int_{\Omega} \nabla u_{t}\cdot\nabla (v-u_{t})+\int_{\Gamma_{\mathrm{T}}}g_{t}|v|-\int_{\Gamma_{\mathrm{T}}}g_{t}|u_{t}|\geq \int_{\Omega}f_{t}(v-u_{t})+\int_{\Gamma_{\mathrm{N}}}k_{t}(v-u_{t}), \qquad \forall v\in\HH^{1}_{\mathrm{D}}(\Omega).
$$
Defining the \textit{parameterized Tresca friction functional} by
$$
\fonction{\Phi}{\mathbb{R}^{+}\times \HH^{1}_{\mathrm{D}}(\Omega)}{\R}{(t,w)}{\displaystyle \displaystyle \Phi(t,w):=\int_{\Gamma_{\mathrm{T}}}g_{t}|w|,}
$$
and using the proximal operator (see Definition~\ref{proxi}) introduced by J.-J. Moreau in 1965 (see~\cite{MOR}), the solution to the parameterized Tresca friction problem~\eqref{justeintro} is characterized by
$$
\displaystyle u_{t}=\mathrm{prox}_{\Phi(t,\cdot)}(F_{t}),
$$
where $F_{t}\in\HH^{1}_{\mathrm{D}}(\Omega)$ is the unique solution to the classical \textit{parameterized Dirichlet-Neumann problem} given by
$$
 \int_{\Omega}\nabla F_{t}\cdot\nabla \varphi=\int_{\Omega}f_{t}\varphi+\int_{\Gamma_{\mathrm{N}}}k_{t}\varphi, \qquad\forall\varphi\in\HH^{1}_{\mathrm{D}}(\Omega),
$$
for all~$t \geq 0$. 
For all~$t \geq 0$, note that $\Phi(t,\cdot)$ is a lower semi-continuous convex proper function on~$\HH^{1}_{\mathrm{D}}(\Omega)$ and thus $\mathrm{prox}_{\Phi(t,\cdot)}$ is well-defined. 
Then, taking into account of the above characterization of~$u_{t}$, the differentiability of the map $t\in\mathbb{R}^{+}\mapsto u_{t}\in\HH^{1}_{\mathrm{D}}(\Omega)$ at $t=0$ is strongly related to the differentiability (in a generalized sense) of the proximal operator $\mathrm{prox}_{\Phi(t,\cdot)}$. In the previous paper~\cite{4ABC}, since the friction threshold~$g_t = g$ is not perturbed, then $\Phi(t,\cdot) = \Phi$ and $u_t =\mathrm{prox}_{\Phi}(F_t)$, and thus the standard notion of twice epi-differentiability of~$\Phi$ is used in order to derive the differentiability of the map $t\in\mathbb{R}^{+}\mapsto u_{t}\in\HH^{1}_{\mathrm{D}}(\Omega)$ at $t=0$. In the present work, since the friction threshold~$g_t$ is perturbed, and thus $u_{t}=\mathrm{prox}_{\Phi(t,\cdot)}(F_{t})$, we are driven to use an extended version of the twice epi-differentiability of~$\Phi(t,\cdot)$ (depending on a parameter) introduced in the recent paper~\cite{8AB} (see also Definition~\ref{epidiffpara}).

\paragraph{Main result.}
With the previous methodology and under some appropriate assumptions described in Theorem~\ref{caractu0derivDNT}, we prove that the map $t\in\mathbb{R}^{+}\mapsto u_{t}\in\HH^{1}_{\mathrm{D}}(\Omega)$ is differentiable at $t=0$, and its derivative $u'_{0}\in\HH^{1}_{\mathrm{D}}(\Omega)$ is given by
$$
\displaystyle u_{0}'=\mathrm{prox}_{\mathrm{D}_{e}^{2}\Phi(u_{0}|F_{0}-u_{0})}(F_{0}'),
$$
where $\mathrm{D}_{e}^{2}\Phi(u_{0}|F_{0}-u_{0})$ stands for the second-order epi-derivative (see Definition~\ref{epidiffpara}) of the parameterized Tresca friction functional~$\Phi$ at~$u_{0}$ for $F_{0}-u_{0}$, and where $F'_{0}\in\HH^{1}_{\mathrm{D}}(\Omega)$ is the derivative at~$t=0$ of the map $t\in\mathbb{R}^{+}\mapsto F_{t}\in \HH^{1}_{\mathrm{D}}(\Omega)$. Moreover we prove that $u'_{0}\in\HH^{1}_{\mathrm{D}}(\Omega)$ exactly corresponds to the unique weak solution to the \textit{Signorini problem}
\begin{equation}\tag{SP$_{0}'$}
\arraycolsep=2pt
\left\{
\begin{array}{rcll}
-\Delta u_{0}' 	& = &  f_{0}'  & \text{ in } \Omega , \\
u_{0}'	& = & 0  & \text{ on } \Gamma_{\mathrm{D}}\cup\Gamma^{u_{0},g_{0}}_{\mathrm{T}_{\mathrm{S}_{\mathrm{D}}}}, \\
\partial_{\nn} u_{0}' & = & k_{0}'  & \text{ on } \Gamma_{\mathrm{N}}, \\
\partial_{\nn} u_{0}' & = & g_{0}'\frac{\partial_{\nn}u_{0}}{g_{0}}  & \text{ on } \Gamma^{u_{0},g_{0}}_{\mathrm{T}_{\mathrm{S}_{\mathrm{N}}}}, \\
 u_{0}'\leq0\text{, } \partial_{\nn}u_{0}'\leq g_{0}'\frac{\partial_{\nn}u_{0}}{g_{0}} \text{ and } u_{0}'\left(\partial_{\nn}u_{0}'-g_{0}'\frac{\partial_{\nn}u_{0}}{g_{0}}\right) & = & 0 & \text{ on } \Gamma^{u_{0},g_{0}}_{\mathrm{T}_{\mathrm{S-}}}, \\
 u_{0}'\geq0\text{, } \partial_{\nn}u_{0}'\geq g_{0}'\frac{\partial_{\nn}u_{0}}{g_{0}} \text{ and } u_{0}'\left(\partial_{\nn}u_{0}'-g_{0}'\frac{\partial_{\nn}u_{0}}{g_{0}}\right) & = & 0  & \text{ on } \Gamma^{u_{0},g_{0}}_{\mathrm{T}_{\mathrm{S+}}}.
\end{array}
\right.
\end{equation}
The subdivision $\Gamma_{\mathrm{T}}=\Gamma^{u_{0},g_{0}}_{\mathrm{T}_{\mathrm{S}_{\mathrm{N}}}}\cup
\Gamma^{u_{0},g_{0}}_{\mathrm{T}_{\mathrm{S}_{\mathrm{D}}}}\cup\Gamma^{u_{0},g_{0}}_{\mathrm{T}_{\mathrm{S-}}}\cup\Gamma^{u_{0},g_{0}}_{\mathrm{T}_{\mathrm{S+}}}$ is described in details in Theorem~\ref{caractu0derivDNT}. Here~$f'_{0}\in\LL^{2}(\Omega)$ (resp.\ $k'_{0}\in~\LL^{2}(\Gamma_{\mathrm{N}})$) is the derivative at $t=0$ of the map $t\in\mathbb{R}^{+}\mapsto f_{t}\in \mathrm{L}^{2}(\Omega)$ (resp.\ $t\in\mathbb{R}^{+}\mapsto k_{t}\in \mathrm{L}^{2}(\Gamma_{\mathrm{N}})$) and~$g'_{0}\in\LL^{2}(\Gamma_{\mathrm{T}})$ is the map defined for almost every $s\in\Gamma_{\mathrm{T}}$ by~$g'_{0}(s):=\lim_{t \to 0^{+}}\frac{g_{t}(s)-g_{0}(s)}{t}$.

We emphasize that Signorini's unilateral conditions are obtained on the boundaries~$\Gamma^{u_{0},g_{0}}_{\mathrm{T}_{\mathrm{S-}}}$ and~$\Gamma^{u_{0},g_{0}}_{\mathrm{T}_{\mathrm{S+}}}$ (see, e.g.,~\cite[Section~1]{LIONS2} for a similar scalar version of Signorini's unilateral conditions), although the sensitivity analysis focused on the Tresca's friction law perturbation. Hence, our work reveals an unexpected link between these two classical boundary conditions in contact mechanics. Roughly speaking, our main result claims that Signorini’s solution can be considered as first-order approximation of perturbed Tresca’s solutions. Precisely, for small values of $t>0$, the Tresca’s solution $u_{t}$ can be approximated by $u_{0}+tu_{0}'$ in~$\HH^{1}$-norm. Some numerical simulations are provided at the end of the paper in order to illustrate this result.

\paragraph{Organization of the paper.}
The paper is organized as follows. Section~\ref{rappel} is dedicated to some basic notions from convex analysis and functional analysis used throughout the paper. Section~\ref{Mainresult} is the core of the present work. In Section~\ref{section5} we introduce three boundary value problems involved in the sensitivity analysis of the Tresca friction problem performed in Section~\ref{section4} which is concluded with our main result (see Theorem~\ref{caractu0derivDNT}). In Section~\ref{simunum}, numerical simulations are provided in order to illustrate our main result. Finally, in Appendix~\ref{casparticulier}, some sufficient conditions that guarantee the twice epi-differentiability of the parameterized Tresca friction functional, which is a crucial assumption in our main theorem, are provided.

\section{Notions from Convex Analysis and Functional Framework }\label{rappel}
In this section we start with some notions from convex analysis in Section~\ref{rappelconvex} and we conclude with some basics of functional analysis in Section~\ref{rappelfunct}.

\subsection{Notions from Convex Analysis}\label{rappelconvex}
For notions and results presented in this section, we refer to standard references such as~\cite{BREZ2,MINTY,ROCK2} and~\cite[Chapter~12]{ROCK}. In what follows $(\mathrm{V}, \dual{\cdot}{\cdot}_{\mathrm{V}})$ stands for a general real Hilbert space. 
\begin{myDefn}[Domain and epigraph]\label{epidom}
Let  $\phi \,: \, \mathrm{V}\rightarrow \mathbb{R}\cup\left\{\pm \infty \right\}$.
The \emph{domain} and the \emph{epigraph} of~$\phi$ are respectively defined by
$$
\mathrm{dom}\left(\phi\right):=\left\{x\in \mathrm{V} \mid \phi(x)<+\infty \right\} \quad \text{and} \quad
\mathrm{epi}\left(\phi\right):=\left\{(x,t)\in \mathrm{V}\times\mathbb{R}\mid \phi(x)\leq t\right\}.
$$
\end{myDefn}
Recall that $\phi \,: \, \mathrm{V}\rightarrow \mathbb{R}\cup\left\{\pm \infty \right\}$ is said to be \textit{proper} if $\mathrm{dom}(\phi)\neq \emptyset$ and $\phi(x)>-\infty$ 
for all~$x\in\mathrm{V}$. Moreover, $\phi$ is a convex (resp.\ lower semi-continuous) function on $\mathrm{V}$ if and only if $\mathrm{epi}(\phi)$ is a convex (resp.\ closed) subset of~$\mathrm{V}\times\R$.
\begin{myDefn}[Convex subdifferential operator]\label{sousdiff}
 Let $\phi  :  \mathrm{V} \rightarrow \R\cup\left\{+\infty\right\}$ be a proper function. We denote by $\partial{\phi}  :  \mathrm{V} \rightrightarrows \mathrm{V}$ the \emph{convex subdifferential operator} of $\phi$, defined by 
 $$
 \partial{\phi}(x):=\left\{y\in\mathrm{V} \mid \forall z\in\mathrm{V}\text{, } \dual{y}{z-x}_{\mathrm{V}}\leq \phi(z)-\phi(x)\right\},
 $$
for all $x\in\mathrm{V}$.
\end{myDefn} 
\begin{myDefn}[Proximal operator]\label{proxi}
 Let $\phi  :  \mathrm{V} \rightarrow \R\cup\left\{+\infty\right\}$ be a proper lower semi-continuous convex function. The \emph{proximal operator} associated with $\phi$ is the map~$\mathrm{prox}_{\phi}  :  \mathrm{V} \rightarrow \mathrm{V}$ defined by
 $$
      \mathrm{prox}_{\phi}(x):=\underset{y\in \mathrm{V}}{\argmin}\left[ \phi(y)+\frac{1}{2}\left \| y-x \right \|^{2}_{\mathrm{V}}\right]=(\mathrm{I}+\partial \phi)^{-1}(x),
 $$
for all $x\in\mathrm{V}$, where $\mathrm{I}  :  \mathrm{V}\rightarrow \mathrm{V}$ stands for the identity operator.
\end{myDefn}
The proximal operator have been introduced by J.-J. Moreau in 1965 (see~\cite{MOR}) and can be seen as a generalization of the classical projection operators onto nonempty closed convex subsets. It is well-known that, if $\phi  :  \mathrm{V} \rightarrow \R\cup\left\{+\infty\right\}$ is a proper lower semi-continuous convex function, then~$\partial{\phi}$ is a maximal monotone operator (see, e.g.,~\cite{ROCK2}), and thus the proximal operator~$\mathrm{prox}_{\phi}$ is well-defined and a single-valued map (see, e.g.,~\cite[Chapter II]{BREZ2}).

In what follows, some definitions related to the notion of twice epi-differentiability are recalled (for more details, see~\cite[Chapter 7, section B p.240]{ROCK} for the finite-dimensional case and~\cite{DO} for the infinite-dimensional one). The strong (resp.\ weak) convergence of a sequence in~$\mathcal{H}$ will be denoted by~$\rightarrow$ (resp.\ $\rightharpoonup$) and note that all limits with respect to~$t$ will be considered for~$t \to 0^+$.
\begin{myDefn}[Mosco-convergence]\label{limitemuch}
The \emph{outer}, \emph{weak-outer}, \emph{inner} and \emph{weak-inner limits} of a parameterized family~$(A_{t})_{t>0}$ of subsets of $\mathrm{V}$ are respectively defined by
\begin{eqnarray*}
      \mathrm{lim}\sup A_{t}&:=&\left\{ x\in \mathrm{V} \mid \exists (t_{n})_{n\in\mathbb{N}}\rightarrow 0^{+}, \exists \left(x_{n}\right)_{n\in\mathbb{N}}\rightarrow x, \forall n\in\mathbb{N}, x_{n}\in A_{t_{n}}\right\},\\
     \mathrm{w}\text{-}\mathrm{lim}\sup A_{t}&:=&\left\{ x\in \mathrm{V} \mid \exists (t_{n})_{n\in\mathbb{N}}\rightarrow 0^{+}, \exists \left(x_{n}\right)_{n\in\mathbb{N}}\rightharpoonup x, \forall n\in\mathbb{N}, x_{n}\in A_{t_{n}}\right\},\\
     \mathrm{lim}\inf A_{t}&:=&\left\{ x\in \mathrm{V} \mid \forall (t_{n})_{n\in\mathbb{N}}\rightarrow 0^{+}, \exists \left(x_{n}\right)_{n\in\mathbb{N}}\rightarrow x, \exists N\in\mathbb{N}, \forall n\geq N, x_{n}\in A_{t_{n}}\right\},\\
     \mathrm{w}\text{-}\mathrm{lim}\inf A_{t}&:=&\left\{ x\in \mathrm{V} \mid \forall (t_{n})_{n\in\mathbb{N}}\rightarrow 0^{+}, \exists \left(x_{n}\right)_{n\in\mathbb{N}}\rightharpoonup x, \exists N\in\mathbb{N}, \forall n\geq N, x_{n}\in A_{t_{n}}\right\}.
\end{eqnarray*}
The family~$(A_{t})_{t>0}$ is said to be \emph{Mosco-convergent} if~$
\mathrm{w}\text{-}\mathrm{lim}\sup A_{t}\subset\mathrm{lim}\inf A_{t}
$. In that case, all the previous limits are equal and we write
$$
     \mathrm{M}\text{-}\mathrm{lim} A_{t}:=\mathrm{lim}\inf A_{t}=\mathrm{lim}\sup A_{t}=\mathrm{w}\text{-}\mathrm{lim}\inf A_{t}=\mathrm{w}\text{-}\mathrm{lim}\sup A_{t}.
$$
\end{myDefn}
\begin{myDefn}[Mosco epi-convergence]
  Let $(\phi_{t})_{t>0}$ be a parameterized family of functions~$\phi_{t}  : \mathrm{V}\rightarrow \mathbb{R}\cup\left\{\pm \infty \right\}$ for all $t>0$.
 We say that $(\phi_{t})_{t>0}$ is \emph{Mosco epi-convergent} if~$(\mathrm{epi}(\phi_{t}))_{t>0}$ is Mosco-convergent in~$\mathrm{V} \times \R$. Then we denote by $\mathrm{ME}\text{-}\mathrm{lim}~ \phi_{t}  :  \mathrm{V}\rightarrow \mathbb{R}\cup\left\{\pm \infty \right\}$ the function characterized by its epigraph~$\mathrm{epi}\left(\mathrm{ME}\text{-}\mathrm{lim}~\phi_{t}\right):=\mathrm{M}\text{-}\mathrm{lim}$ $\displaystyle \mathrm{epi}\left(\phi_{t}\right)$ and we say that $(\phi_{t})_{t>0}$ Mosco epi-converges to~$\mathrm{ME}\text{-}\mathrm{lim}~\phi_{t}$.
 \end{myDefn}
 
The proof of the next proposition can be found in~\cite[Proposition 3.19 p.297]{ATTOUCH}.
\begin{myProp}[Characterization of Mosco epi-convergence]\label{caractMosco}
Let $(\phi_{t})_{t>0}$ be a parameterized family of functions~$\phi_{t}  :  \mathrm{V}\rightarrow \mathbb{R}\cup\left\{\pm \infty \right\}$ for all $t>0$ and let~$\phi  :  \mathrm{V}\rightarrow \mathbb{R}\cup\left\{\pm \infty \right\}$. Then $(\phi_{t})_{t>0}$ Mosco epi-converges to $\phi$ if and only if, for all $x\in \mathrm{V}$, the two conditions
\begin{enumerate}
    \item there exists $(x_{t})_{t>0}\rightarrow x$ such that $\mathrm{lim}\sup \phi_{t}(x_{t})\leq \phi(x)$;
    \item for all $(x_{t})_{t>0}\rightharpoonup  x$, $\mathrm{lim}\inf \phi_{t}(x_{t})\geq \phi(x)$;
\end{enumerate}
are satisfied.
\end{myProp}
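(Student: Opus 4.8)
The plan is to reformulate the functional Mosco epi-convergence as a pair of set inclusions for the epigraphs and then to identify each inclusion with one of the two conditions. First I would record the elementary chain of inclusions valid for any parameterized family $(A_t)_{t>0}$ of subsets of $\mathrm{V}$, namely $\mathrm{lim}\inf A_t \subset \mathrm{lim}\sup A_t \subset \mathrm{w}\text{-}\mathrm{lim}\sup A_t$ and $\mathrm{lim}\inf A_t \subset \mathrm{w}\text{-}\mathrm{lim}\inf A_t \subset \mathrm{w}\text{-}\mathrm{lim}\sup A_t$, which follow directly from Definition~\ref{limitemuch} since a strongly convergent sequence is weakly convergent and since an ``$\forall t_n$'' statement implies the corresponding ``$\exists t_n$'' statement. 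Applying this with $A_t := \mathrm{epi}(\phi_t)$, the single defining inclusion of Mosco-convergence together with the prescribed value $\mathrm{epi}(\phi)$ of the limit is equivalent to the pair
$$
\mathrm{w}\text{-}\mathrm{lim}\sup\,\mathrm{epi}(\phi_t) \;\subset\; \mathrm{epi}(\phi) \;\subset\; \mathrm{lim}\inf\,\mathrm{epi}(\phi_t),
$$
because these two inclusions squeeze all four set-limits onto $\mathrm{epi}(\phi)$. It then remains to prove that the right-hand inclusion is equivalent to condition~(1) holding for every $x \in \mathrm{V}$, and that the left-hand inclusion is equivalent to condition~(2) holding for every $x \in \mathrm{V}$.

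For the inner (right-hand) inclusion, the bridge between the sequential quantifier of Definition~\ref{limitemuch} and the family formulation of condition~(1) is the remark that $(x,\alpha) \in \mathrm{lim}\inf A_t$ if and only if $d((x,\alpha),A_t) \to 0$ as $t \to 0^+$, equivalently if and only if there is a family $(y_t)_{t>0}$ with $y_t \in A_t$ for small $t$ and $y_t \to (x,\alpha)$; one direction is immediate and the other follows by choosing $y_t \in A_t$ nearly realizing the distance. Granting condition~(1) and taking $(x,\alpha) \in \mathrm{epi}(\phi)$, I would pick a recovery family $(x_t) \to x$ with $\limsup_{t\to0^+}\phi_t(x_t) \leq \phi(x) \leq \alpha$ and set $\beta_t := \max(\phi_t(x_t),\alpha)$, so that $(x_t,\beta_t) \in \mathrm{epi}(\phi_t)$ and $\beta_t \to \alpha$, whence $(x,\alpha) \in \mathrm{lim}\inf\,\mathrm{epi}(\phi_t)$. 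Conversely, from the inclusion and a point $x$ with $\phi(x) \in \R$, applying it to $(x,\phi(x))$ yields a family $(x_t,\alpha_t) \to (x,\phi(x))$ with $\phi_t(x_t) \leq \alpha_t$, so that $x_t \to x$ and $\limsup_{t\to0^+}\phi_t(x_t) \leq \lim_{t\to0^+}\alpha_t = \phi(x)$; the case $\phi(x)=+\infty$ is settled by $x_t := x$ and the case $\phi(x)=-\infty$ by applying the inclusion to $(x,-m)$ for each $m \in \N$ and diagonalizing.

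For the weak-outer (left-hand) inclusion, granting condition~(2) and taking $(x,\alpha) \in \mathrm{w}\text{-}\mathrm{lim}\sup\,\mathrm{epi}(\phi_t)$, there exist $t_n \to 0^+$ and $(x_n,\alpha_n) \rightharpoonup (x,\alpha)$ with $\phi_{t_n}(x_n) \leq \alpha_n$; then $\phi(x) \leq \liminf_n \phi_{t_n}(x_n) \leq \liminf_n \alpha_n = \alpha$, i.e.\ $(x,\alpha) \in \mathrm{epi}(\phi)$. Conversely, from the inclusion and any family $(x_t) \rightharpoonup x$, I would set $\ell := \liminf_{t\to0^+}\phi_t(x_t)$, extract $t_n \to 0^+$ with $\phi_{t_n}(x_{t_n}) \to \ell$, and, when $\ell < +\infty$, note that $(x_{t_n},\phi_{t_n}(x_{t_n})) \rightharpoonup (x,\ell)$ belongs to $\mathrm{w}\text{-}\mathrm{lim}\sup\,\mathrm{epi}(\phi_t) \subset \mathrm{epi}(\phi)$, so $\phi(x) \leq \ell$; here I use that weak convergence in $\mathrm{V}\times\R$ is componentwise and that convergence of the real coordinate is strong hence weak.

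The main obstacle I anticipate is not any individual estimate but the careful bookkeeping required to match the sequential quantifiers of Definition~\ref{limitemuch} with the continuous-parameter families $(x_t)_{t>0}$ appearing in conditions~(1)--(2), handled uniformly through the distance reformulation of the inner limit, together with the systematic treatment of the extended-real values $\pm\infty$. Once this correspondence is set up, each of the four implications reduces to the vertical monotonicity of epigraphs (membership of $(x,\alpha)$ for all $\alpha \geq \phi(x)$) and to the stability of weak convergence under the real coordinate, so no further analytic input is needed beyond Definition~\ref{limitemuch}.
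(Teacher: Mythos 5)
The paper does not prove this proposition: it is quoted from Attouch (Proposition 3.19 p.297 of the cited monograph), so there is no in-paper argument to compare against. Your proof is the standard epigraphical one and is essentially the argument found in that reference: reduce Mosco epi-convergence to the two inclusions $\mathrm{w}\text{-}\limsup\mathrm{epi}(\phi_t)\subset\mathrm{epi}(\phi)\subset\liminf\mathrm{epi}(\phi_t)$ via the sandwich of the four set limits, then translate the inner inclusion into the recovery-sequence condition (1) and the weak-outer inclusion into the liminf condition (2); the distance reformulation of $\liminf A_t$ and the $\max(\phi_t(x_t),\alpha)$ lift are exactly the right devices, and all four implications check out. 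Two small points deserve one more line each in a polished write-up: when you apply condition (2) to the sequence $(x_n)$ extracted from a point of $\mathrm{w}\text{-}\limsup\mathrm{epi}(\phi_t)$, you should extend it to a family $(y_t)_{t>0}$ (e.g.\ $y_{t_n}=x_n$, $y_t=x$ otherwise) and use that the liminf over $t\to 0^+$ is bounded above by the liminf along the subsequence; and in the converse of the weak-outer inclusion the case $\ell=-\infty$ is not covered by the pair $(x,\ell)$, but is settled by applying the inclusion to $(x_{t_n},-m)$ for each fixed $m$, exactly as in your treatment of $\phi(x)=-\infty$ on the other side. Neither point affects the validity of the argument, and in the paper's applications the functions are proper, so the second issue never arises.
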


Now let us recall the notion of twice epi-differentiability introduced by R.T.~Rockafellar in~1985 (see~\cite{Rockafellar}) that generalizes the classical notion of second-order derivative to nonsmooth convex functions.
 \begin{myDefn}[Twice epi-differentiability]\label{epidiff}
   A proper lower semi-continuous convex function~$\phi  : \mathrm{V}\rightarrow \mathbb{R}\cup\left\{+\infty \right\}$ is said to be \emph{twice epi-differentiable} at $x\in\mathrm{dom}(\phi)$ for $y\in\partial\phi(x)$ if the family of second-order difference quotient functions $(\delta_{t}^{2}\phi(x|y))_{t>0}$ defined by
  $$
  \fonction{ \delta_{t}^{2}\phi(x|y) }{\mathrm{V}}{\mathbb{R}\cup\left\{+\infty\right\}}{z}{\displaystyle\frac{\phi(x+t z)-\phi(x)-t\dual{ y}{z}_{\mathrm{V}}}{t^{2}},}
$$
for all $t>0$, is Mosco epi-convergent. In that case we denote by
$$
\mathrm{d}_{e}^{2}\phi(x|y):=\mathrm{ME}\text{-}\mathrm{lim}~\delta_{t}^{2}\phi(x|y),
$$
which is called the second-order epi-derivative of $\phi$ at $x$ for $y$.
\end{myDefn}
\begin{myExa}\label{epidiffabs}
The classical absolute value map $\left|\cdot\right|  :  \mathbb{R}\rightarrow\mathbb{R}$, which is a proper lower semi-continuous convex function on $\mathbb{R}$, is twice epi-differentiable at any $x\in\R$ for any~$y\in\partial{\left|\cdot\right|}(x)$, and its second-order epi-derivative is given by~$ \mathrm{d}_{e}^{2}|\mathord{\cdot} |(x|y)=\mathrm{I}_{\mathrm{K}_{x,y}}$, where $\mathrm{K}_{x,y}$ is the nonempty closed convex subset of $\R$ defined by
$$
\mathrm{K}_{x,y}:=
\left\{
\begin{array}{lcll}
\R	&   & \text{ if } x\neq0 , \\
\mathbb{R}^{+}	&   & \text{ if } x=0 \text{ and } y=1 , \\
\mathbb{R}^{-}	&   & \text{ if } x=0 \text{ and } y=-1 , \\
\left\{0\right\}	&   & \text{ if } x=0 \text{ and } y\in (-1,1),
\end{array}
\right.
$$
and where~$\mathrm{I}_{\mathrm{K}_{x,y}}$ stands for the indicator function of~$\mathrm{K}_{x,y}$, defined by $\mathrm{I}_{\mathrm{K}_{x,y}}(z):=0$ if $z\in\mathrm{K}_{x,y}$, and~$\mathrm{I}_{\mathrm{K}_{x,y}}(z):=+\infty$ otherwise (see~\cite[Example 2.6. p.7]{4ABC}).
\end{myExa}

In the above classical definition of twice epi-differentiability, the function $\phi$ does not depend on the parameter~$t$. However, in this paper, the parameterized Tresca friction functional does (see Introduction). Therefore, in this paper, we will use an extended version of twice epi-differentiability which has been recently introduced in~\cite{8AB}. To this aim, when considering a function $\Phi  :  \mathbb{R}^{+}\times \mathrm{V}\rightarrow \mathbb{R}\cup\left\{+\infty\right\}$ such that, for all $t\geq0$, $\Phi(t,\cdot)$ is a proper function on $\mathrm{V}$, we will make use of the two following notations: $\partial \Phi(0,\mathord{\cdot} )(x)$ stands for the convex subdifferential operator at~$x\in\mathrm{V}$ of the map~$w\in\mathrm{V} \mapsto \Phi(0,w)\in \R\cup\left\{+\infty\right\}$, and~$\Phi^{-1}(\mathord{\cdot} , \mathbb{R}):=\left\{ x\in\mathrm{V}\mid \forall t\geq0, \; \Phi(t,x)\in\R \right\}$.

 \begin{myDefn}[Twice epi-differentiability depending on a parameter]\label{epidiffpara}
Let~$\Phi  :  \mathbb{R}^{+}\times \mathrm{V}\rightarrow \mathbb{R} \cup\left\{+\infty\right\}$ be a function such that, for all $t\geq0$, $\Phi(t,\cdot)$ is a proper lower semi-continuous convex function on $\mathrm{V}$. The function $\Phi$ is said to be \emph{twice epi-differentiable} at $x\in \Phi^{-1}(\mathord{\cdot} , \mathbb{R})$ for $y\in\partial \Phi(0,\mathord{\cdot} )(x)$ if the family of second-order difference quotient functions $(\Delta_{t}^{2}\Phi(x|y))_{t>0}$ defined by
$$
  \fonction{\Delta_{t}^{2}\Phi(x|y) }{\mathrm{V}}{\mathbb{R}\cup\left\{+\infty\right\}}{z}{\displaystyle\frac{\Phi(t,x+t z)-\Phi(t,x)-t\dual{ y}{z}_{\mathrm{V}}}{t^{2}},}
$$
for all $t>0$, is Mosco epi-convergent. In that case, we denote by
$$
\mathrm{D}_{e}^{2}\Phi(x|y):=\mathrm{ME}\text{-}\mathrm{lim}~\Delta_{t}^{2}\Phi(x|y) ,
$$
which is called the second-order epi-derivative of $\Phi$ at $x$ for $y$.
\end{myDefn}
Note that, if the function $\Phi$ is $t$-independent in Definition~\ref{epidiffpara}, then we recover Definition~\ref{epidiff}. Finally the following theorem is the key point in order to derive our main result in this paper. It is a particular case of a more general theorem that can be found in~\cite[Theorem 4.15 p.1714]{8AB}.
\begin{myTheorem}\label{TheoABC2018}
Let~$\Phi  :  \mathbb{R}^{+}\times \mathrm{V}\rightarrow \mathbb{R} \cup \left\{+\infty\right\}$ be a function such that, for all $t\geq0$, $\Phi(t,\cdot)$ is a proper lower semi-continuous convex function on $\mathrm{V}$. Let $F  :  \mathbb{R}^{+}\rightarrow \mathrm{V}$ and let~$u  :  \mathbb{R}^{+}\rightarrow \mathrm{V}$ be defined by
$$
    u(t):=\mathrm{prox}_{\Phi(t,\mathord{\cdot} )}(F(t)),
$$
for all~$t\geq 0$. If the conditions 
\begin{enumerate}
    \item $F$ is differentiable at $t=0$;
    \item $\Phi$ is twice epi-differentiable at $u(0)$ for $F(0)-u(0)\in\partial \Phi(0,\mathord{\cdot} )(u(0))$;
    \item $\mathrm{D}_{e}^{2}\Phi(u(0)|F(0)-u(0))$ is a proper lower semi-continuous convex function on $\mathrm{V}$;
\end{enumerate}
are satisfied, then $u$ is differentiable at $t=0$ with
$$
u'(0)=\mathrm{prox}_{\mathrm{D}_{e}^{2}\Phi(u(0)|F(0)-u(0))}(F'(0)).
$$
\end{myTheorem}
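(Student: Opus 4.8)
The plan is to prove differentiability at $t=0$ by showing that the difference quotient $z_{t}:=\frac{1}{t}\left(u(t)-u(0)\right)$ converges strongly in $\mathrm{V}$ as $t\to 0^{+}$ to the announced limit. Throughout I write $x:=u(0)$ and $y:=F(0)-u(0)$, so that Condition~2 guarantees $x\in\Phi^{-1}(\mathord{\cdot},\mathbb{R})$, that $y\in\partial\Phi(0,\mathord{\cdot})(x)$, and the twice epi-differentiability of $\Phi$ at $x$ for $y$. The heart of the argument is to recognize that $z_{t}$ is itself a proximal point: it equals the proximal operator of the second-order difference quotient function $\Delta_{t}^{2}\Phi(x|y)$ of Definition~\ref{epidiffpara}, evaluated at the difference quotient $\frac{1}{t}(F(t)-F(0))$ of $F$. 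Once this is established, the conclusion follows by passing to the limit, Condition~1 controlling the argument and Condition~2 controlling the functions.

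First I would establish the proximal reformulation. Since $u(t)=\mathrm{prox}_{\Phi(t,\mathord{\cdot})}(F(t))$, the resolvent identity in Definition~\ref{proxi} gives $F(t)-u(t)\in\partial\Phi(t,\mathord{\cdot})(u(t))$ for every $t>0$. On the other hand, for fixed $t>0$ the map $z\mapsto\Delta_{t}^{2}\Phi(x|y)(z)$ is obtained from $\Phi(t,\mathord{\cdot})$ by composition with the affine map $z\mapsto x+tz$ (whose linear part $t\,\mathrm{I}$ is invertible), by subtraction of the linear term $z\mapsto t\dual{y}{z}_{\mathrm{V}}$, and by division by $t^{2}$. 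The exact convex chain rule therefore yields
$$
\partial\big(\Delta_{t}^{2}\Phi(x|y)\big)(z)=\frac{1}{t}\Big(\partial\Phi(t,\mathord{\cdot})(x+tz)-y\Big), \qquad \forall z\in\mathrm{V}.
$$
Writing $u(t)=x+tz_{t}$ and recalling $y=F(0)-x$, the membership $F(t)-u(t)\in\partial\Phi(t,\mathord{\cdot})(u(t))$ becomes
$$
\frac{1}{t}\big(F(t)-F(0)\big)-z_{t}\;\in\;\partial\big(\Delta_{t}^{2}\Phi(x|y)\big)(z_{t}),
$$
which is exactly $\frac{1}{t}(F(t)-F(0))\in\big(\mathrm{I}+\partial\Delta_{t}^{2}\Phi(x|y)\big)(z_{t})$, that is
$$
z_{t}=\mathrm{prox}_{\Delta_{t}^{2}\Phi(x|y)}\Big(\tfrac{1}{t}(F(t)-F(0))\Big).
$$
Note that $\Delta_{t}^{2}\Phi(x|y)$ is proper (because $x\in\Phi^{-1}(\mathord{\cdot},\mathbb{R})$), lower semi-continuous and convex, so this proximal operator is well-defined and single-valued.

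It then remains to pass to the limit $t\to0^{+}$ in this identity. By Condition~2 the family $(\Delta_{t}^{2}\Phi(x|y))_{t>0}$ Mosco epi-converges to $\mathrm{D}_{e}^{2}\Phi(x|y)$, and by Condition~3 the limit is proper lower semi-continuous convex, so its proximal operator is likewise well-defined. The decisive tool is Attouch's theorem (see~\cite{ATTOUCH}), which asserts that for proper lower semi-continuous convex functions Mosco epi-convergence is equivalent to graph convergence of the subdifferentials, and in particular to the pointwise strong convergence of the associated resolvents, that is $\mathrm{prox}_{\Delta_{t}^{2}\Phi(x|y)}(v)\to\mathrm{prox}_{\mathrm{D}_{e}^{2}\Phi(x|y)}(v)$ for each fixed $v\in\mathrm{V}$. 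Since each resolvent is nonexpansive, a standard triangle-inequality argument upgrades this to convergence with a moving argument: using Condition~1, which gives $\frac{1}{t}(F(t)-F(0))\to F'(0)$, we obtain
$$
z_{t}=\mathrm{prox}_{\Delta_{t}^{2}\Phi(x|y)}\Big(\tfrac{1}{t}(F(t)-F(0))\Big)\;\longrightarrow\;\mathrm{prox}_{\mathrm{D}_{e}^{2}\Phi(x|y)}(F'(0))
$$
strongly in $\mathrm{V}$. This shows that $u$ is differentiable at $t=0$ with $u'(0)=\mathrm{prox}_{\mathrm{D}_{e}^{2}\Phi(u(0)|F(0)-u(0))}(F'(0))$, as claimed.

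The main obstacle is twofold. The first delicate point is the proximal reformulation itself: one must justify the exact convex chain rule for the subdifferential of $\Delta_{t}^{2}\Phi(x|y)$ (here granted by the invertibility of the linear part $t\,\mathrm{I}$, which removes any constraint-qualification issue) and carefully track the affine shift so as to reveal the resolvent structure. The deeper obstacle is the limit passage, which rests on the nontrivial equivalence between Mosco epi-convergence of convex functions and the convergence of their resolvents; by contrast, the upgrade from a fixed argument to the moving argument $\frac{1}{t}(F(t)-F(0))$ is a routine consequence of the nonexpansiveness of proximal operators.
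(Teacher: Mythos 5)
Your proof is correct. Note first that the paper itself does not prove this theorem: it imports it verbatim as a particular case of \cite[Theorem 4.15 p.1714]{8AB}, so there is no internal proof to compare against. Your argument supplies the missing content, and it follows what is in fact the standard (and, as far as I can tell, the intended) route. The two pillars are both sound: (a) the algebraic identity $z_{t}:=\frac{1}{t}(u(t)-u(0))=\mathrm{prox}_{\Delta_{t}^{2}\Phi(x|y)}\bigl(\frac{1}{t}(F(t)-F(0))\bigr)$, which you verify correctly --- the exact subdifferential chain rule needs no qualification condition here because the linear part $t\,\mathrm{I}$ of the affine substitution is an isomorphism, and unwinding $t\bigl[\frac{1}{t}(F(t)-F(0))-z_{t}\bigr]+y=F(t)-u(t)\in\partial\Phi(t,\cdot)(u(t))$ is exactly the resolvent inclusion from Definition~\ref{proxi}; and (b) Attouch's theorem, which converts the Mosco epi-convergence of $(\Delta_{t}^{2}\Phi(x|y))_{t>0}$ to $\mathrm{D}_{e}^{2}\Phi(x|y)$ (Condition~2, with Condition~3 making the limit a legitimate proper lower semi-continuous convex function) into pointwise strong convergence of the proximal mappings; the upgrade to the moving argument $\frac{1}{t}(F(t)-F(0))\to F'(0)$ by nonexpansiveness is routine, as you say. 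Two small points worth making explicit if you write this up: the properness of $\Delta_{t}^{2}\Phi(x|y)$ uses that $u(0)\in\Phi^{-1}(\cdot,\mathbb{R})$, which is built into Definition~\ref{epidiffpara} and hence into Condition~2 (it is not automatic from $u(0)\in\mathrm{dom}\,\Phi(0,\cdot)$ alone, since $\Phi(t,u(0))$ must be finite for every $t$); and Attouch's theorem is usually stated for sequences, so one should apply it along an arbitrary sequence $t_{n}\to0^{+}$ to conclude the full limit. Neither is a gap, just bookkeeping.
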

\subsection{Functional Framework}\label{rappelfunct}
Let $d\geq1$ and $\Omega$ be a nonempty bounded connected open subset of $\R^{d}$ with a Lipschitz continuous boundary $\Gamma:=\partial{\Omega}$. We denote by $\LL^{2}(\Omega)$, $\LL^{2}(\Gamma)$, $\LL^{1}(\Gamma)$, $\HH^{1}(\Omega)$, $\HH^{1/2}(\Gamma)$, $\HH^{-1/2}(\Gamma)$ the usual Lebesgue and Sobolev spaces endowed with their standard norms. 
Moreover~the~no\-tation~$\MD(\Omega)$ stands for the set of functions $\varphi  :  \Omega \rightarrow \R$ that are infinitely differentiable with compact support in $\Omega$, and $\MD'(\Omega)$ for the set of distributions on $\Omega$.
In what follows we consider a decomposition~$\Gamma=:\Gamma_{1}\cup\Gamma_{2}$ where $\Gamma_{1}$ and $\Gamma_{2}$ are two measurable disjoint subsets of $\Gamma$. Let us recall some classical embeddings useful in this paper, that can be found for instance in~\cite{BREZ} and \cite[Chapter~7, Section~2 p.395]{DAUTLIONS}.


\begin{myProp}\label{injections}
The continuous and dense embeddings~$\HH^{1}(\Omega){\hookrightarrow} \HH^{1/2}(\Gamma){\hookrightarrow} \LL^{2}(\Gamma){\hookrightarrow} \HH^{-1/2}(\Gamma)$, $\LL^{2}(\Gamma){\hookrightarrow} \LL^{1}(\Gamma)$, $\HH^{1}(\Omega){\hookrightarrow} \mathrm{L}^{2}(\Omega)$, and $\HH^{1/2}_{00}(\Gamma_{1}){\hookrightarrow} \LL^{2}(\Gamma_{1}){\hookrightarrow} \HH^{-1/2}_{00}(\Gamma_{1})$ are satisfied, where $\HH^{1/2}_{00}(\Gamma_{1})$ is the vector subspace of $\HH^{1/2}(\Gamma)$ defined by
$$
\HH^{1/2}_{00}(\Gamma_{1}):=\left\{v\in\LL^{2}(\Gamma_{1}) \mid \exists w\in\HH^{1}(\Omega), \; w=v \text{ on }\Gamma_{1} \text{ and } w=0 \text{ on }\Gamma_{2} \right\},
$$
and $\HH^{-1/2}_{00}(\Gamma_{1})$ stands for its dual space.
Furthermore the dense and compact embedding~$\HH^{1}(\Omega)\hookdoubleheadrightarrow~\mathrm{L}^{2}(\Gamma)$ holds true.
\end{myProp}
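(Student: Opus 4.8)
The plan is to assemble this statement from classical trace and embedding theorems rather than to prove anything from scratch, since each displayed arrow is a standard fact available in~\cite{BREZ} and~\cite[Chapter~7, Section~2 p.395]{DAUTLIONS}. I would organize the verification into three groups: the continuous dense inclusions that go ``downward'' in regularity, the two pairs of inclusions into dual spaces obtained by a Gelfand-triple duality argument, and finally the compact trace embedding.

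First I would treat the forward inclusions. The embedding $\HH^{1}(\Omega)\hookrightarrow\HH^{1/2}(\Gamma)$ is precisely the linear continuous (indeed surjective) trace operator, so its continuity and the density of its range are immediate from the trace theorem; the trivial inclusion $\HH^{1}(\Omega)\hookrightarrow\LL^{2}(\Omega)$ is continuous by definition of the $\HH^{1}$-norm and has dense range because it contains $\MD(\Omega)$, which is already dense in $\LL^{2}(\Omega)$. For $\HH^{1/2}(\Gamma)\hookrightarrow\LL^{2}(\Gamma)$ the inclusion is continuous since the $\HH^{1/2}$-norm dominates the $\LL^{2}$-norm, and it is dense because smooth functions lie densely in both spaces. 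For $\LL^{2}(\Gamma)\hookrightarrow\LL^{1}(\Gamma)$ I would use that $\Gamma$ carries a finite surface measure (being the boundary of a bounded Lipschitz domain) and apply the Cauchy--Schwarz inequality to get $\norm{v}_{\LL^{1}(\Gamma)}\leq|\Gamma|^{1/2}\norm{v}_{\LL^{2}(\Gamma)}$, density following once more from the density of continuous functions.

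Next I would obtain the inclusions into $\HH^{-1/2}(\Gamma)$ and $\HH^{-1/2}_{00}(\Gamma_{1})$ by the standard rigged-Hilbert-space mechanism: whenever $V\hookrightarrow H$ is continuous and dense and $H$ is identified with its own dual via the Riesz isomorphism, one gets $H\simeq H'\hookrightarrow V'$ continuously and densely. Applying this with $V=\HH^{1/2}(\Gamma)$ and $H=\LL^{2}(\Gamma)$ yields $\LL^{2}(\Gamma)\hookrightarrow\HH^{-1/2}(\Gamma)$, and applying it with $V=\HH^{1/2}_{00}(\Gamma_{1})$ and $H=\LL^{2}(\Gamma_{1})$ yields the chain $\HH^{1/2}_{00}(\Gamma_{1})\hookrightarrow\LL^{2}(\Gamma_{1})\hookrightarrow\HH^{-1/2}_{00}(\Gamma_{1})$ by definition of $\HH^{-1/2}_{00}(\Gamma_{1})$ as the dual of $\HH^{1/2}_{00}(\Gamma_{1})$. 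The delicate point here, and the step I expect to require the most care, is that the Lions--Magenes space $\HH^{1/2}_{00}(\Gamma_{1})$ is strictly smaller than the naive restriction of $\HH^{1/2}(\Gamma)$ to $\Gamma_{1}$: its definition via zero-extension across $\Gamma_{2}$ is exactly what makes the pairing $\dual{\cdot}{\cdot}$ between $\HH^{1/2}_{00}(\Gamma_{1})$ and $\HH^{-1/2}_{00}(\Gamma_{1})$ well-posed, so one must first check that $\HH^{1/2}_{00}(\Gamma_{1})$ is a Hilbert space continuously and densely included in $\LL^{2}(\Gamma_{1})$ before the duality argument applies.

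Finally, for the compact embedding $\HH^{1}(\Omega)\hookdoubleheadrightarrow\LL^{2}(\Gamma)$ I would factor the trace map as the bounded operator $\HH^{1}(\Omega)\to\HH^{1/2}(\Gamma)$ composed with the inclusion $\HH^{1/2}(\Gamma)\hookrightarrow\LL^{2}(\Gamma)$; since $\Gamma$ is a compact $(d-1)$-dimensional Lipschitz manifold, the latter inclusion is compact by a Rellich--Kondrachov argument for fractional Sobolev spaces, and the composition of a bounded operator with a compact one is compact. Density of the range again follows from the surjectivity of the trace onto $\HH^{1/2}(\Gamma)$, which is dense in $\LL^{2}(\Gamma)$. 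The only subtlety is confirming that this fractional compactness remains valid on a merely Lipschitz (rather than smooth) boundary, which is precisely what the cited references supply.
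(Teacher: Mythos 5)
Your proposal is correct: each embedding is indeed the classical fact you describe (trace theorem, Cauchy--Schwarz on the finite measure $\Gamma$, the Gelfand-triple duality $V\hookrightarrow H\simeq H'\hookrightarrow V'$, and compactness of $\HH^{1/2}(\Gamma)\hookrightarrow\LL^{2}(\Gamma)$ composed with the bounded trace). The paper offers no proof at all for this proposition, simply citing the same standard references, so your assembly of the classical arguments is entirely consistent with its approach.
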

  
The next proposition is a particular case of a more general statement that can be found in~\cite[Section 2.9 p.56]{TUC}.
\begin{myProp}\label{Ident}
Let $w\in\HH^{-1/2}_{00}(\Gamma_{1})$. If there exists $C\geq 0$ such that
$$
\dual{w}{v}_{\HH^{-1/2}_{00}(\Gamma_{1})\times \HH^{1/2}_{00}(\Gamma_{1})}\leq C\left \| v \right \|_{\LL^{2}(\Gamma_{1})},
$$
for all $v\in \HH^{1/2}_{00}(\Gamma_{1})$, then $w$ can be identified to an element $h\in \LL^{2}(\Gamma_{1})$ with $$\dual{w}{v}_{\HH^{-1/2}_{00}(\Gamma_{1})\times \HH^{1/2}_{00}(\Gamma_{1})}=\dual{h}{v}_{\LL^{2}(\Gamma_{1})},
$$
for all $v\in\HH^{1/2}_{00}(\Gamma_{1})$.
\end{myProp}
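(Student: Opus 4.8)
The plan is to view the pairing $v \mapsto \dual{w}{v}_{\HH^{-1/2}_{00}(\Gamma_{1})\times \HH^{1/2}_{00}(\Gamma_{1})}$ as a linear functional defined on the subspace $\HH^{1/2}_{00}(\Gamma_{1})$, which is dense in $\LL^{2}(\Gamma_{1})$, to show that this functional is continuous for the $\LL^{2}(\Gamma_{1})$-norm, to extend it by density to all of $\LL^{2}(\Gamma_{1})$, and finally to invoke the Riesz representation theorem in the Hilbert space $\LL^{2}(\Gamma_{1})$ in order to produce the desired element~$h$.

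First I would upgrade the one-sided bound in the hypothesis to a genuine continuity estimate. Since $\HH^{1/2}_{00}(\Gamma_{1})$ is a vector space, replacing $v$ by $-v$ in the assumed inequality gives $-\dual{w}{v}_{\HH^{-1/2}_{00}(\Gamma_{1})\times \HH^{1/2}_{00}(\Gamma_{1})} \leq C\left\|v\right\|_{\LL^{2}(\Gamma_{1})}$, whence
$$
\abs{\dual{w}{v}_{\HH^{-1/2}_{00}(\Gamma_{1})\times \HH^{1/2}_{00}(\Gamma_{1})}} \leq C\left\|v\right\|_{\LL^{2}(\Gamma_{1})}, \qquad \forall v \in \HH^{1/2}_{00}(\Gamma_{1}).
$$
Consequently, the linear map $\ell : v \in \HH^{1/2}_{00}(\Gamma_{1}) \mapsto \dual{w}{v}_{\HH^{-1/2}_{00}(\Gamma_{1})\times \HH^{1/2}_{00}(\Gamma_{1})} \in \R$ is continuous with respect to the norm $\left\|\cdot\right\|_{\LL^{2}(\Gamma_{1})}$.

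Next I would exploit the density of the embedding $\HH^{1/2}_{00}(\Gamma_{1}) \hookrightarrow \LL^{2}(\Gamma_{1})$ recalled in Proposition~\ref{injections}. Since $\ell$ is a Lipschitz (hence uniformly continuous) linear functional on the dense subspace $\HH^{1/2}_{00}(\Gamma_{1})$ of the complete space $\LL^{2}(\Gamma_{1})$, it admits a unique continuous linear extension $\widetilde{\ell} : \LL^{2}(\Gamma_{1}) \to \R$, with operator norm still bounded by $C$. Applying the Riesz representation theorem in the Hilbert space $\LL^{2}(\Gamma_{1})$ then provides a unique $h \in \LL^{2}(\Gamma_{1})$ such that $\widetilde{\ell}(v) = \dual{h}{v}_{\LL^{2}(\Gamma_{1})}$ for all $v \in \LL^{2}(\Gamma_{1})$. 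Restricting this equality to $v \in \HH^{1/2}_{00}(\Gamma_{1})$ and using that $\widetilde{\ell}$ coincides with $\ell$ on $\HH^{1/2}_{00}(\Gamma_{1})$ yields exactly $\dual{w}{v}_{\HH^{-1/2}_{00}(\Gamma_{1})\times \HH^{1/2}_{00}(\Gamma_{1})} = \dual{h}{v}_{\LL^{2}(\Gamma_{1})}$, which is the announced identification.

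This argument is entirely classical and I do not anticipate any serious obstacle; the only point requiring a small amount of care is the passage from the one-sided inequality of the hypothesis to the two-sided bound, which is immediately handled by the linearity of the pairing. After that, the conclusion follows from the standard extension-by-density and Riesz representation machinery, the identification of $w$ with $h$ being understood through the continuous dense chain $\HH^{1/2}_{00}(\Gamma_{1}) \hookrightarrow \LL^{2}(\Gamma_{1}) \hookrightarrow \HH^{-1/2}_{00}(\Gamma_{1})$ of Proposition~\ref{injections}.
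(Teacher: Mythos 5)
Your proof is correct. The paper gives no proof of this proposition, merely citing a reference, and your argument --- upgrading the one-sided bound to a two-sided one by linearity, extending by density of $\HH^{1/2}_{00}(\Gamma_{1})$ in $\LL^{2}(\Gamma_{1})$ (Proposition~\ref{injections}), and applying the Riesz representation theorem --- is precisely the standard argument that the cited result rests on, with the one genuinely delicate point (the passage from $\leq$ to $\abs{\cdot}\leq$) handled correctly.
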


The next proposition, known as Green formula, can be found in~\cite[Corollary 2.6 p.28]{GIR}.
\begin{myProp}[Green formula] \label{Green}
Let $w\in \HH^{1}(\Omega)$. If $\Delta w\in \LL^{2}(\Omega)$, then~$\nabla w$ admits a normal trace $\partial_{\nn}w\in \HH^{-1/2}(\Gamma)$ such that
$$
\int_{\Omega}\varphi\Delta w+\int_{\Omega}\nabla w\cdot\nabla\varphi=\dual{\partial_{\nn}w}{\varphi}_{\HH^{-1/2}(\Gamma)\times \HH^{1/2}(\Gamma)},
$$
for all $\varphi\in\HH^{1}(\Omega)$.
\end{myProp}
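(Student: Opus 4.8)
The plan is to \emph{construct} the normal trace $\partial_{\nn} w$ directly as an element of $\HH^{-1/2}(\Gamma)$, defined by duality through the right-hand side of the announced formula, and then to read off the identity from that very construction. First I would invoke the standard properties of the trace operator on a Lipschitz domain: the trace map $\gamma_{0} : \HH^{1}(\Omega) \to \HH^{1/2}(\Gamma)$ is linear, continuous and surjective, its kernel is $\HH^{1}_{0}(\Omega)$ (the closure of $\MD(\Omega)$ in $\HH^{1}(\Omega)$), and it admits a continuous linear right inverse $R : \HH^{1/2}(\Gamma) \to \HH^{1}(\Omega)$ satisfying $\gamma_{0} \circ R = \mathrm{Id}$ and $\norm{R\mu}_{\HH^{1}(\Omega)} \leq C \norm{\mu}_{\HH^{1/2}(\Gamma)}$. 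These facts I would take as known for Lipschitz boundaries.

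Then, for $\mu \in \HH^{1/2}(\Gamma)$, I would set
$$
\ell(\mu) := \int_{\Omega} \psi\, \Delta w + \int_{\Omega} \nabla w \cdot \nabla \psi ,
$$
where $\psi \in \HH^{1}(\Omega)$ is \emph{any} function whose trace equals $\mu$, i.e.\ $\gamma_{0}\psi = \mu$. The main obstacle, and the heart of the proof, is to show that this value does not depend on the chosen lifting $\psi$, that is, that the expression above vanishes whenever $\gamma_{0}\psi = 0$, i.e.\ $\psi \in \HH^{1}_{0}(\Omega)$. For this I would use the weak definition of the Laplacian: for $\psi \in \MD(\Omega)$, the distributional identity gives $\int_{\Omega} \nabla w \cdot \nabla \psi = -\dual{\Delta w}{\psi}_{\MD'(\Omega) \times \MD(\Omega)} = -\int_{\Omega} \psi\, \Delta w$, the last equality holding because $\Delta w \in \LL^{2}(\Omega)$; hence $\int_{\Omega} \psi\, \Delta w + \int_{\Omega} \nabla w \cdot \nabla \psi = 0$ for all $\psi \in \MD(\Omega)$. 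Since both integrals are continuous with respect to $\norm{\psi}_{\HH^{1}(\Omega)}$ and $\MD(\Omega)$ is dense in $\HH^{1}_{0}(\Omega)$, this identity extends to every $\psi \in \HH^{1}_{0}(\Omega)$, which establishes that $\ell$ is well-defined.

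It then remains to check that $\ell$ is continuous on $\HH^{1/2}(\Gamma)$, which I would do by evaluating it on the canonical lifting $\psi = R\mu$ and applying Cauchy--Schwarz:
$$
\abs{\ell(\mu)} \leq \norm{\Delta w}_{\LL^{2}(\Omega)} \norm{R\mu}_{\LL^{2}(\Omega)} + \norm{\nabla w}_{\LL^{2}(\Omega)} \norm{\nabla R\mu}_{\LL^{2}(\Omega)} \leq C \left( \norm{\Delta w}_{\LL^{2}(\Omega)} + \norm{w}_{\HH^{1}(\Omega)} \right) \norm{\mu}_{\HH^{1/2}(\Gamma)} .
$$
Thus $\ell \in \HH^{-1/2}(\Gamma)$, and I would \emph{define} $\partial_{\nn} w := \ell$. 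Finally the Green formula is immediate: given any $\varphi \in \HH^{1}(\Omega)$, the function $\varphi$ is itself an admissible lifting of its own trace $\gamma_{0}\varphi$, so by the very definition of $\ell$ one has $\dual{\partial_{\nn} w}{\varphi}_{\HH^{-1/2}(\Gamma) \times \HH^{1/2}(\Gamma)} = \ell(\gamma_{0}\varphi) = \int_{\Omega} \varphi\, \Delta w + \int_{\Omega} \nabla w \cdot \nabla \varphi$, as claimed. The only genuinely delicate point is the well-definedness in the second paragraph; everything else reduces to the continuity of the trace and lifting operators on Lipschitz domains together with the $\LL^{2}$-regularity of $\Delta w$.
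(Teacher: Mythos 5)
Your construction is correct and is exactly the standard duality argument: the paper itself gives no proof but cites~\cite[Corollary 2.6 p.28]{GIR}, where the normal trace of an $\HH(\mathrm{div})$ field (here $\nabla w$, which lies in $\HH(\mathrm{div};\Omega)$ since $\Delta w\in\LL^{2}(\Omega)$) is defined by precisely this lifting-and-density procedure. All the ingredients you invoke (surjectivity of the trace with continuous right inverse, kernel equal to $\HH^{1}_{0}(\Omega)$ on a Lipschitz domain, and the distributional identity extended by density) are the right ones, and the well-definedness step you flag as the delicate point is indeed the crux.
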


\section{Main Result}\label{Mainresult}
In this section we establish the main result of the paper (Theorem~\ref{caractu0derivDNT}). To this aim, we introduce in Section~\ref{section5} three problems with different boundary conditions: Dirichlet and Neumann conditions (see Problem~\eqref{PbNeumannDirichlet}), Signorini unilateral conditions (see Problem~\eqref{PbDirichletNeumannSigno}) and a condition with the Tresca's friction law (see Problem~\eqref{PbNeumannDirichletTresca}). For each problem, strong and weak solutions are defined, then the equivalence between both solutions is investigated and the existence/uniqueness of the weak solution is proved. All these preliminary results are used in the next Section~\ref{section4} in order to perform the sensitivity analysis of the Tresca friction problem. 

In what follows, let $d\geq1$ and $\Omega$ be a nonempty bounded connected open subset of $\R^{d}$ with a Lipschitz continuous boundary $\Gamma:=\partial{\Omega}$.
We consider the decomposition 
$$
\Gamma=:\Gamma_{\mathrm{D}}\cup\Gamma_{\mathrm{N}}\cup\Gamma_{\mathrm{T}}, 
$$
where $\Gamma_{\mathrm{D}}$, $\Gamma_{\mathrm{N}}$, $\Gamma_{\mathrm{T}}$ are three measurable pairwise disjoint subsets of $\Gamma$, such that $\Gamma_{\mathrm{D}}$ and $\Gamma_{\mathrm{T}}$ have a positive measure. 
Let us denote by $B_{\Gamma}(s,\varepsilon)$ the open ball of $\Gamma$ centered at some $s\in\Gamma$ with some radius $\varepsilon>0$, and, for some subset $\mathrm{A}$ of $\Gamma$, by $\mathrm{int}_{\Gamma}(\mathrm{A})$ the interior of $\mathrm{A}$ in $\Gamma$. Moreover, for any function $v\in\HH^{1}(\Omega)$ such that $\Delta v\in\LL^{2}(\Omega)$, we denote by $\partial_{\nn}v$ the function in $\HH^{-1/2}(\Gamma)$ given by the Green Formula (see Proposition~\ref{Green}).

In this paper we work with the closed vector subspace $\HH^{1}_{\mathrm{D}}(\Omega)$ of $\HH^{1}(\Omega)$ defined by
$$
\HH^{1}_{\mathrm{D}}(\Omega):=\left\{ \varphi\in\HH^{1}(\Omega) \mid \varphi=0 \text{ almost everywhere on } \Gamma_{\mathrm{D}}\right\}.
$$
Since $\Gamma_{\mathrm{D}}$ has a positive measure and thanks to Poincaré's inequality, note that $(\HH^{1}_{\mathrm{D}}(\Omega), \dual{\cdot}{\cdot}_{\HH^{1}_{\mathrm{D}}(\Omega)})$ is a real Hilbert space endowed with the scalar product
$$
\fonction{\dual{\cdot}{\cdot}_{\HH^{1}_{\mathrm{D}}(\Omega)}}{\HH^{1}_{\mathrm{D}}(\Omega)\times\HH^{1}_{\mathrm{D}}(\Omega)}{\R}{(u,v)}{\displaystyle \int_{\Omega}\nabla u\cdot\nabla v.}
$$
The corresponding norm, which is equivalent to the norm~$\left\|\cdot\right\|_{\HH^{1}(\Omega)}$ on~$\HH^{1}_{\mathrm{D}}(\Omega)$, will be denoted by~$\left\|\cdot\right\|_{\HH^{1}_{\mathrm{D}}(\Omega)}$.
\subsection{Some Required Boundary Value Problems
}\label{section5}
For the needs of this preliminary section, let us fix some functions $f\in \mathrm{L}^{2}(\Omega)$, $k\in \mathrm{L}^{2}(\Gamma_{\mathrm{N}})$, $h\in\LL^{2}(\Gamma_{\mathrm{T}})$ and $g\in \mathrm{L}^{2}(\Gamma_{\mathrm{T}})$ such that $g>0$ almost everywhere on $\Gamma_{\mathrm{T}}$. Here the proofs are very close to the ones presented in the recent paper~\cite{4ABC}, and thus they are left to the reader. The main differences being the use of the Hilbert space $\HH^{1}_{\mathrm{D}}(\Omega)$ instead of $\HH^{1}(\Omega)$, and the vector subspace~$\HH^{1/2}_{00}(\Gamma_{\mathrm{N}}\cup\Gamma_{\mathrm{T}})$ of $\HH^{1/2}(\Gamma)$ instead of $\HH^{1/2}(\Gamma)$.

\subsubsection{A Problem with Dirichlet-Neumann Conditions}
Let us consider the Dirichlet-Neumann problem given by
\begin{equation}\tag{DN} \label{PbNeumannDirichlet}
\arraycolsep=2pt
\left\{
\begin{array}{rcll}
-\Delta & = & f   & \text{ in } \Omega , \\
F & = & 0  & \text{ on } \Gamma_{\mathrm{D}} ,\\
\partial_{\nn} F & = & k  & \text{ on } \Gamma_{\mathrm{N}} ,\\
\partial_{\nn} F & = & h  & \text{ on } \Gamma_{\mathrm{T}}.
\end{array}
\right.
\end{equation}
Considering two different Neumann conditions in Problem~\eqref{PbNeumannDirichlet} may seem artificial to the reader. Nevertheless this framework naturally appears in our main result (see Problem~\eqref{caractu0DNT} in Theorem~\ref{caractu0derivDNT}). This is the reason why we consider it in this preliminary section. 

\begin{myDefn}[Strong solution to the Dirichlet-Neumann problem]
A strong solution to the Dirichlet-Neumann problem~\eqref{PbNeumannDirichlet} is a function $F\in\HH^{1}(\Omega)$ such that $-\Delta F=f$ in $\MD'(\Omega)$,~$F=0$ almost everywhere on $\Gamma_{\mathrm{D}}$, $\partial_{\nn}F\in\LL^{2}(\Gamma_{\mathrm{N}}\cup\Gamma_{\mathrm{T}})$ with $\partial_{\nn}F=k$ almost everywhere on $\Gamma_{\mathrm{N}}$ and $\partial_{\nn}F=h$ almost everywhere on $\Gamma_{\mathrm{T}}$.
\end{myDefn}
\begin{myDefn}[Weak solution to the Dirichlet-Neumann problem]\label{SolufaibleDN}
A weak solution to the Dirichlet-Neumann problem~\eqref{PbNeumannDirichlet} is a function $F\in\HH^{1}_{\mathrm{D}}(\Omega)$ such that
\begin{equation}\label{FaibleDirichNeumann}
    \displaystyle\int_{\Omega}\nabla F\cdot\nabla v=\int_{\Omega}fv+\int_{\Gamma_{\mathrm{N}}}kv+\int_{\Gamma_{\mathrm{T}}}hv, \qquad \forall v\in\HH^{1}_{\mathrm{D}}(\Omega).
\end{equation}
\end{myDefn}
\begin{myProp}\label{DNequiart}
A function $F\in\HH^{1}(\Omega)$ is a strong solution to the Dirichlet-Neumann problem~\eqref{PbNeumannDirichlet} if and only if $F$ is a weak solution to the Dirichlet-Neumann problem~\eqref{PbNeumannDirichlet}.
\end{myProp}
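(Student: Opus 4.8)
The plan is to establish the equivalence between strong and weak solutions by treating the two implications separately, with the Green formula (Proposition~\ref{Green}) as the central tool. First I would prove the forward implication: assume $F\in\HH^{1}(\Omega)$ is a strong solution. By definition, $-\Delta F=f$ in $\MD'(\Omega)$ with $f\in\LL^{2}(\Omega)$, so $\Delta F\in\LL^{2}(\Omega)$ and Proposition~\ref{Green} applies, yielding the existence of the normal trace $\partial_{\nn}F\in\HH^{-1/2}(\Gamma)$ together with the identity $\int_{\Omega}\varphi\Delta F+\int_{\Omega}\nabla F\cdot\nabla\varphi=\dual{\partial_{\nn}F}{\varphi}_{\HH^{-1/2}(\Gamma)\times\HH^{1/2}(\Gamma)}$ for all $\varphi\in\HH^{1}(\Omega)$. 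Restricting test functions to $v\in\HH^{1}_{\mathrm{D}}(\Omega)$, the boundary pairing collapses onto $\Gamma_{\mathrm{N}}\cup\Gamma_{\mathrm{T}}$ since such $v$ vanishes on $\Gamma_{\mathrm{D}}$; then substituting $\Delta F=-f$ and using the strong-solution data $\partial_{\nn}F=k$ on $\Gamma_{\mathrm{N}}$ and $\partial_{\nn}F=h$ on $\Gamma_{\mathrm{T}}$ (both in $\LL^{2}$, so the duality pairing becomes an ordinary integral) gives exactly the variational identity~\eqref{FaibleDirichNeumann}. Since $F=0$ a.e.\ on $\Gamma_{\mathrm{D}}$ means $F\in\HH^{1}_{\mathrm{D}}(\Omega)$, this shows $F$ is a weak solution.

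For the converse, suppose $F\in\HH^{1}_{\mathrm{D}}(\Omega)$ satisfies~\eqref{FaibleDirichNeumann}. The membership $F\in\HH^{1}_{\mathrm{D}}(\Omega)$ already encodes $F=0$ a.e.\ on $\Gamma_{\mathrm{D}}$. To recover the PDE, I would test~\eqref{FaibleDirichNeumann} against $\varphi\in\MD(\Omega)$, for which all boundary integrals vanish, obtaining $\int_{\Omega}\nabla F\cdot\nabla\varphi=\int_{\Omega}f\varphi$, i.e.\ $-\Delta F=f$ in $\MD'(\Omega)$. This in particular gives $\Delta F\in\LL^{2}(\Omega)$, so Proposition~\ref{Green} is again available and produces the normal trace $\partial_{\nn}F\in\HH^{-1/2}(\Gamma)$. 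Combining the Green identity with~\eqref{FaibleDirichNeumann} and the already-established $-\Delta F=f$, I would subtract to isolate the boundary terms, arriving at $\dual{\partial_{\nn}F}{v}_{\HH^{-1/2}(\Gamma)\times\HH^{1/2}(\Gamma)}=\int_{\Gamma_{\mathrm{N}}}kv+\int_{\Gamma_{\mathrm{T}}}hv$ for all $v\in\HH^{1}_{\mathrm{D}}(\Omega)$, or equivalently for all traces $v\in\HH^{1/2}_{00}(\Gamma_{\mathrm{N}}\cup\Gamma_{\mathrm{T}})$.

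The main obstacle, as in the analogous result in~\cite{4ABC}, is the final identification step: upgrading the abstract normal trace $\partial_{\nn}F\in\HH^{-1/2}(\Gamma)$ to an honest $\LL^{2}(\Gamma_{\mathrm{N}}\cup\Gamma_{\mathrm{T}})$ function that equals $k$ on $\Gamma_{\mathrm{N}}$ and $h$ on $\Gamma_{\mathrm{T}}$. This is precisely where Proposition~\ref{Ident} intervenes: from the boundary identity above, the right-hand side is bounded by $(\norm{k}_{\LL^{2}(\Gamma_{\mathrm{N}})}+\norm{h}_{\LL^{2}(\Gamma_{\mathrm{T}})})\norm{v}_{\LL^{2}(\Gamma_{\mathrm{N}}\cup\Gamma_{\mathrm{T}})}$ by Cauchy--Schwarz, so the continuity hypothesis of Proposition~\ref{Ident} (with $\Gamma_{1}=\Gamma_{\mathrm{N}}\cup\Gamma_{\mathrm{T}}$) is met. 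This forces $\partial_{\nn}F\in\LL^{2}(\Gamma_{\mathrm{N}}\cup\Gamma_{\mathrm{T}})$ with its $\LL^{2}$-pairing agreeing with the duality pairing. Finally, localizing test functions with trace supported in $\Gamma_{\mathrm{N}}$ (resp.\ $\Gamma_{\mathrm{T}}$) and invoking the fundamental lemma of calculus of variations on each piece yields $\partial_{\nn}F=k$ a.e.\ on $\Gamma_{\mathrm{N}}$ and $\partial_{\nn}F=h$ a.e.\ on $\Gamma_{\mathrm{T}}$, completing the proof that $F$ is a strong solution. The delicate point worth care is justifying that traces supported on one piece of the decomposition genuinely lie in $\HH^{1/2}_{00}$ of that piece and are admissible in the identity, which is exactly what the functional framework of Proposition~\ref{injections} secures.
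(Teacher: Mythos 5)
Your proposal is correct and follows essentially the same route as the paper's own argument: Green's formula in both directions, testing against $\MD(\Omega)$ to recover the PDE, the Cauchy--Schwarz estimate feeding Proposition~\ref{Ident} with $\Gamma_{1}=\Gamma_{\mathrm{N}}\cup\Gamma_{\mathrm{T}}$ to upgrade $\partial_{\nn}F$ to $\LL^{2}$, and then localization (via density of $\HH^{1/2}_{00}(\Gamma_{\mathrm{N}}\cup\Gamma_{\mathrm{T}})$ in $\LL^{2}(\Gamma_{\mathrm{N}}\cup\Gamma_{\mathrm{T}})$) to identify $\partial_{\nn}F$ with $k$ and $h$ on the respective pieces. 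No gaps worth noting.
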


Using the Riesz representation theorem, we obtain the following existence/uniqueness result.
\begin{myProp}\label{existenceunicitéDN}
The Dirichlet-Neumann problem~\eqref{PbNeumannDirichlet} admits a unique solution $F\in\HH^{1}_{\mathrm{D}}(\Omega)$. 
Moreover there exists a constant $C \geq 0$ (depending only on $\Omega$) such that
$$
\left \| F \right \|_{\HH^{1}_{\mathrm{D}}(\Omega)}\leq C\left(\left \| f  \right \|_{\LL^{2}(\Omega)} + \left \| k \right \|_{\LL^{2}(\Gamma_{\mathrm{N}})}+\left \| h \right \|_{\LL^{2}(\Gamma_{\mathrm{T}})}\right).
$$
\end{myProp}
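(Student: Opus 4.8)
The plan is to recognize the weak formulation~\eqref{FaibleDirichNeumann} as a Riesz representation problem on the Hilbert space $\HH^{1}_{\mathrm{D}}(\Omega)$. Indeed, the left-hand side $\int_{\Omega}\nabla F\cdot\nabla v$ is by construction exactly the scalar product $\dual{F}{v}_{\HH^{1}_{\mathrm{D}}(\Omega)}$, so solving~\eqref{PbNeumannDirichlet} in the weak sense amounts to finding the element $F$ representing, with respect to this scalar product, the linear functional
$$
L(v):=\int_{\Omega}fv+\int_{\Gamma_{\mathrm{N}}}kv+\int_{\Gamma_{\mathrm{T}}}hv.
$$
Since $(\HH^{1}_{\mathrm{D}}(\Omega),\dual{\cdot}{\cdot}_{\HH^{1}_{\mathrm{D}}(\Omega)})$ is a real Hilbert space (as recalled just above, via Poincaré's inequality), the entire statement will follow from the Riesz representation theorem once $L$ is shown to be a continuous linear functional on $\HH^{1}_{\mathrm{D}}(\Omega)$.

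First I would establish the continuity of $L$. Linearity is immediate; the only point is boundedness. Applying the Cauchy--Schwarz inequality to each of the three terms and then invoking the continuous embeddings collected in Proposition~\ref{injections}---namely $\HH^{1}(\Omega)\hookrightarrow\LL^{2}(\Omega)$ for the volume term and the trace embedding $\HH^{1}(\Omega)\hookrightarrow\LL^{2}(\Gamma)$ for the two boundary terms (combined with $\norm{v}_{\LL^{2}(\Gamma_{\mathrm{N}})},\norm{v}_{\LL^{2}(\Gamma_{\mathrm{T}})}\leq\norm{v}_{\LL^{2}(\Gamma)}$)---and finally the equivalence of $\norm{\cdot}_{\HH^{1}(\Omega)}$ and $\norm{\cdot}_{\HH^{1}_{\mathrm{D}}(\Omega)}$ on $\HH^{1}_{\mathrm{D}}(\Omega)$, I obtain a constant $C\geq0$ depending only on $\Omega$ such that
$$
\abs{L(v)}\leq C\left(\norm{f}_{\LL^{2}(\Omega)}+\norm{k}_{\LL^{2}(\Gamma_{\mathrm{N}})}+\norm{h}_{\LL^{2}(\Gamma_{\mathrm{T}})}\right)\norm{v}_{\HH^{1}_{\mathrm{D}}(\Omega)}
$$
for all $v\in\HH^{1}_{\mathrm{D}}(\Omega)$. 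This estimate is the only genuine computation in the proof and is its technical heart; there is no real obstacle beyond bookkeeping of the embedding constants, which is why the authors leave the details to the reader.

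Then the Riesz representation theorem yields a unique $F\in\HH^{1}_{\mathrm{D}}(\Omega)$ with $\dual{F}{v}_{\HH^{1}_{\mathrm{D}}(\Omega)}=L(v)$ for all $v\in\HH^{1}_{\mathrm{D}}(\Omega)$, which is precisely~\eqref{FaibleDirichNeumann}; existence and uniqueness of the weak solution follow at once, and by Proposition~\ref{DNequiart} the same holds for the strong solution. For the a priori bound I would test the weak formulation with $v=F$, giving $\norm{F}^{2}_{\HH^{1}_{\mathrm{D}}(\Omega)}=L(F)$, and insert the continuity bound above evaluated at $v=F$. Dividing by $\norm{F}_{\HH^{1}_{\mathrm{D}}(\Omega)}$ (the case $F=0$ being trivial) produces exactly the claimed inequality
$$
\norm{F}_{\HH^{1}_{\mathrm{D}}(\Omega)}\leq C\left(\norm{f}_{\LL^{2}(\Omega)}+\norm{k}_{\LL^{2}(\Gamma_{\mathrm{N}})}+\norm{h}_{\LL^{2}(\Gamma_{\mathrm{T}})}\right),
$$
with the same constant $C$ as in the continuity estimate.
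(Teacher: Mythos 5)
Your proposal is correct and follows exactly the route the paper indicates: the weak formulation \eqref{FaibleDirichNeumann} is a Riesz representation problem for the continuous linear functional $L$ on $(\HH^{1}_{\mathrm{D}}(\Omega),\dual{\cdot}{\cdot}_{\HH^{1}_{\mathrm{D}}(\Omega)})$, with continuity obtained from Cauchy--Schwarz and the embeddings of Proposition~\ref{injections}, and the a priori bound obtained by testing with $v=F$. This matches the paper's (omitted, one-line) argument, so there is nothing to add.
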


\subsubsection{A Signorini Problem}\label{SectionSignorinicasscalairesansu}
Here we assume that $\Gamma_{\mathrm{T}}$ can be decomposed as
$$
\Gamma_{\mathrm{T}}=:\Gamma_{\mathrm{T}_{\mathrm{S}_{\mathrm{N}}}}\cup\Gamma_{\mathrm{T}_{\mathrm{S}_{\mathrm{D}}}}\cup\Gamma_{\mathrm{T}_{\mathrm{S-}}}\cup\Gamma_{\mathrm{T}_{\mathrm{S+}}},
$$
where $\Gamma_{\mathrm{T}_{\mathrm{S}_{\mathrm{N}}}}$, $\Gamma_{\mathrm{T}_{\mathrm{S}_{\mathrm{D}}}}$, $\Gamma_{\mathrm{T}_{\mathrm{S-}}}$ and $\Gamma_{\mathrm{T}_{\mathrm{S+}}}$ are four measurable pairwise disjoint subsets of $\Gamma$, and we consider the Signorini problem given by
\begin{equation}\tag{SP} \label{PbDirichletNeumannSigno}
\arraycolsep=2pt
\left\{
\begin{array}{rcll}
-\Delta u & = &  f   & \text{ in } \Omega , \\
u & = & 0  & \text{ on } \Gamma_{\mathrm{D}}\cup\Gamma_{\mathrm{T}_{\mathrm{S}_{\mathrm{D}}}}, \\
\partial_{\nn} u & = & k  & \text{ on } \Gamma_{\mathrm{N}}, \\
\partial_{\nn} u & = & h  & \text{ on } \Gamma_{{\mathrm{T}_{\mathrm{S}_{\mathrm{N}}}}}, \\
 u\leq0\text{, } \partial_{\nn}u\leq h \text{ and } u\left(\partial_{\nn}u-h\right)	& = & 0  & \text{ on } \Gamma_{\mathrm{T}_{\mathrm{S-}}}, \\
 u\geq0\text{, } \partial_{\nn}u\geq h \text{ and } u\left(\partial_{\nn}u-h\right)	& = & 0  & \text{ on } \Gamma_{\mathrm{T}_{\mathrm{S+}}}.
\end{array}
\right.
\end{equation}
\begin{myDefn}[Strong solution to the Signorini problem]
  A strong solution to the Signorini problem~\eqref{PbDirichletNeumannSigno} is a function $u\in \HH^{1}(\Omega)$ such that $-\Delta u=f$ in $\mathcal{D}
'(\Omega)$, $u=0$ on $\Gamma_{\mathrm{D}}\cup\Gamma_{\mathrm{T}_{\mathrm{S}_{\mathrm{D}}}}$, and also~$\partial_{\nn}u\in \mathrm{L}^{2}(\Gamma_{\mathrm{N}}\cup\Gamma_{\mathrm{T}})$ with $\partial_{\nn}u=k$ almost everywhere on $\Gamma_{\mathrm{N}}$, $\partial_{\nn}u=h$ almost everywhere on~$\Gamma_{\mathrm{T}_{\mathrm{S}_{\mathrm{N}}}}$, $u\leq0$, $\partial_{\nn}u\leq h$ and $u(\partial_{\nn}u-h)=0$ almost everywhere on $\Gamma_{\mathrm{T}_{\mathrm{S-}}}$, $u\geq0$, $\partial_{\nn}u\geq h$ and~$u(\partial_{\nn}u-h)=0$ almost everywhere on $\Gamma_{\mathrm{T}_{\mathrm{S+}}}$.
\end{myDefn}
\begin{myDefn}[Weak solution to the Signorini problem]\label{WeaksolutionSigno}
A weak solution to the Signorini problem~\eqref{PbDirichletNeumannSigno} is a function $u\in\mathcal{K}^{1}(\Omega)$ such that 
\begin{equation}\label{FormuFaibleDNS}
  \displaystyle\int_{\Omega}\nabla u\cdot\nabla(v-u)\geq\int_{\Omega}f(v-u)+\int_{\Gamma_{\mathrm{N}}}k(v-u)+\int_{\Gamma_{\mathrm{T}}}h(v-u),\qquad \forall v\in\mathcal{K}^{1}(\Omega),
\end{equation}
where $\mathcal{K}^{1}(\Omega)$ is the nonempty closed convex subset of $\HH^{1}_{\mathrm{D}}(\Omega)$ given by
$$
\mathcal{K}^{1}(\Omega) := \left\{v\in\HH^{1}_{\mathrm{D}}(\Omega) \mid v\leq 0 \text{ on } \Gamma_{\mathrm{T}_{\mathrm{S-}}}\text{, } v=0 \text{ on }\Gamma_{\mathrm{T}_{\mathrm{S}_{\mathrm{D}}}} \text{ and } v\geq0 \text{ on } \Gamma_{\mathrm{T}_{\mathrm{S+}}} \right \}.
$$
\end{myDefn}

One can easily prove that a strong solution is a weak solution but, to the best of our knowledge, without additional assumptions, one cannot prove the converse. To get the equivalence, we need to assume, in particular, that the decomposition of $\Gamma$ is \textit{consistent} in the following sense.
\begin{myDefn}[Consistent decomposition]\label{regulieresens2}
 The decomposition   $\Gamma=\Gamma_{\mathrm{D}}\cup\Gamma_{\mathrm{N}}\cup\Gamma_{\mathrm{T}_{\mathrm{S}_{\mathrm{N}}}}\cup\Gamma_{\mathrm{T}_{\mathrm{S}_{\mathrm{D}}}}\cup\Gamma_{\mathrm{T}_{\mathrm{S-}}}\cup\Gamma_{\mathrm{T}_{\mathrm{S+}}}$ is said to be \emph{consistent} if
 \begin{enumerate}
     \item for almost all $s\in\Gamma_{\mathrm{T}_{\mathrm{S-}}}\cup\Gamma_{\mathrm{T}_{\mathrm{S+}}}$,  $s\in \mathrm{int}_{\Gamma}(\Gamma_{\mathrm{T}_{\mathrm{S-}}})$ or $s\in\mathrm{int}_{\Gamma}(\Gamma_{\mathrm{T}_{\mathrm{S+}}})$;
     \item the set $\mathcal{K}^{1/2}(\Gamma)$ given by
    $$         
    \mathcal{K}^{1/2}(\Gamma):=\left \{ v\in \HH^{1/2}(\Gamma) \mid v\leq 0 \text{ on } \Gamma_{\mathrm{T}_{\mathrm{S-}}}\text{, } v=0 \text{ on }\Gamma_{\mathrm{D}}\cup\Gamma_{\mathrm{T}_{\mathrm{S}_{\mathrm{D}}}} \text{ and } v\geq0 \text{ on } \Gamma_{\mathrm{T}_{\mathrm{S+}}} \right \},
    $$
(which is a nonempty closed convex subset of $\HH^{1/2}(\Gamma)$) is dense in  the nonempty closed convex subset $\mathcal{K}^{0}(\Gamma)$ of $\mathrm{L}^{2}(\Gamma)$ given by
     $$
         \mathcal{K}^{0}(\Gamma):=\left \{ v\in \mathrm{L}^{2}(\Gamma) \mid v\leq 0 \text{ on } \Gamma_{\mathrm{T}_{\mathrm{S-}}}\text{, } v=0 \text{ on }\Gamma_{\mathrm{D}}\cup\Gamma_{\mathrm{T}_{\mathrm{S}_{\mathrm{D}}}} \text{ and } v\geq0 \text{ on } \Gamma_{\mathrm{T}_{\mathrm{S+}}} \right \}.
     $$
\end{enumerate}
\end{myDefn}
\begin{myProp}\label{EquiSignoCassansu}
Let $u\in \HH^{1}(\Omega)$.
\begin{enumerate}
    \item If $u$ is a strong solution to the Signorini problem~\eqref{PbDirichletNeumannSigno}, then $u$ is a weak solution to the Signorini problem~\eqref{PbDirichletNeumannSigno}.
    \item If $u$ is a weak solution to the Signorini problem~\eqref{PbDirichletNeumannSigno} such that $\partial_{\nn}u\in \mathrm{L}^{2}(\Gamma_{\mathrm{N}}\cup\Gamma_{\mathrm{T}})$ and the decomposition $\Gamma=\Gamma_{\mathrm{D}}\cup\Gamma_{\mathrm{N}}\cup\Gamma_{\mathrm{T}_{\mathrm{S}_{\mathrm{N}}}}\cup\Gamma_{\mathrm{T}_{\mathrm{S}_{\mathrm{D}}}}\cup\Gamma_{\mathrm{T}_{\mathrm{S-}}}\cup\Gamma_{\mathrm{T}_{\mathrm{S+}}}$ is consistent, then $u$ is a strong solution to the Signorini problem~\eqref{PbDirichletNeumannSigno}.
\end{enumerate}
\end{myProp}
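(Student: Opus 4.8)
The plan is to prove the two implications separately, in both cases passing between the variational inequality~\eqref{FormuFaibleDNS} and the pointwise boundary conditions by combining Green's formula (Proposition~\ref{Green}) with the $\LL^2$-identification of the normal trace (Proposition~\ref{Ident}).

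For the first implication, assume $u$ is a strong solution. Since $f\in\LL^{2}(\Omega)$, the identity $-\Delta u=f$ holds in $\LL^{2}(\Omega)$, so Green's formula applies to $u$ tested against $v-u$ for an arbitrary $v\in\mathcal{K}^{1}(\Omega)$. As $\partial_{\nn}u\in\LL^{2}(\Gamma_{\mathrm{N}}\cup\Gamma_{\mathrm{T}})$ and the trace of $v-u$ vanishes on $\Gamma_{\mathrm{D}}$, the duality pairing reduces to the $\LL^{2}$-integral $\int_{\Gamma_{\mathrm{N}}\cup\Gamma_{\mathrm{T}}}\partial_{\nn}u\,(v-u)$. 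I would then split this integral over the four pieces of $\Gamma_{\mathrm{T}}$: on $\Gamma_{\mathrm{T}_{\mathrm{S}_{\mathrm{D}}}}$ the integrand vanishes since $u=v=0$ there; on $\Gamma_{\mathrm{N}}$ and $\Gamma_{\mathrm{T}_{\mathrm{S}_{\mathrm{N}}}}$ I substitute $\partial_{\nn}u=k$ and $\partial_{\nn}u=h$; and on $\Gamma_{\mathrm{T}_{\mathrm{S-}}}$ and $\Gamma_{\mathrm{T}_{\mathrm{S+}}}$ I rewrite the contribution as $\int(\partial_{\nn}u-h)(v-u)$ and exploit the complementarity relation $u(\partial_{\nn}u-h)=0$ to reduce it to $\int(\partial_{\nn}u-h)\,v$. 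On $\Gamma_{\mathrm{T}_{\mathrm{S-}}}$ both factors are nonpositive (since $\partial_{\nn}u\leq h$ and $v\leq0$), and on $\Gamma_{\mathrm{T}_{\mathrm{S+}}}$ both are nonnegative, so each contribution is $\geq0$. Summing yields exactly~\eqref{FormuFaibleDNS}, and the sign conditions of the strong solution give $u\in\mathcal{K}^{1}(\Omega)$.

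For the converse, assume $u$ is a weak solution with $\partial_{\nn}u\in\LL^{2}(\Gamma_{\mathrm{N}}\cup\Gamma_{\mathrm{T}})$ under a consistent decomposition. First I would take $v=u\pm\psi$ with $\psi\in\MD(\Omega)$ in~\eqref{FormuFaibleDNS} to obtain $-\Delta u=f$ in $\MD'(\Omega)$, hence in $\LL^{2}(\Omega)$. Applying Green's formula and comparing with~\eqref{FormuFaibleDNS}, together with the $\LL^{2}$-identification of Proposition~\ref{Ident} over $\HH^{1/2}_{00}(\Gamma_{\mathrm{N}}\cup\Gamma_{\mathrm{T}})$, gives $\int_{\Gamma_{\mathrm{N}}\cup\Gamma_{\mathrm{T}}}\partial_{\nn}u\,(v-u)\geq\int_{\Gamma_{\mathrm{N}}}k(v-u)+\int_{\Gamma_{\mathrm{T}}}h(v-u)$ for every $v\in\mathcal{K}^{1}(\Omega)$. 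The pointwise conditions are then extracted by localized choices of $v$: on $\Gamma_{\mathrm{N}}$ and $\Gamma_{\mathrm{T}_{\mathrm{S}_{\mathrm{N}}}}$ I take $v=u\pm\phi$ with $\phi$ supported there (no sign constraint) to get the equalities $\partial_{\nn}u=k$ and $\partial_{\nn}u=h$; on $\Gamma_{\mathrm{T}_{\mathrm{S-}}}$ I first take $v=u+\phi$ with $\phi\leq0$ to obtain $\partial_{\nn}u\leq h$, and then the admissible choices $v=0$ and $v=2u$ (on that piece) to sandwich $\int_{\Gamma_{\mathrm{T}_{\mathrm{S-}}}}(\partial_{\nn}u-h)\,u=0$; since the integrand has constant sign this forces the complementarity $u(\partial_{\nn}u-h)=0$ almost everywhere. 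The piece $\Gamma_{\mathrm{T}_{\mathrm{S+}}}$ is treated symmetrically.

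The \emph{main obstacle} lies in justifying these localized test functions, i.e.\ in showing that the admissible directions $v-u$ are rich enough to read off a.e.\ pointwise conditions on each boundary piece. This is exactly the role of the consistency assumption: condition~(i) (almost every point of $\Gamma_{\mathrm{T}_{\mathrm{S-}}}\cup\Gamma_{\mathrm{T}_{\mathrm{S+}}}$ being interior to one of the two pieces) permits localizing the unilateral constraints without interference between neighbouring pieces, while condition~(ii) (density of $\mathcal{K}^{1/2}(\Gamma)$ in $\mathcal{K}^{0}(\Gamma)$) allows passing from $\HH^{1/2}$-traces to arbitrary sign-constrained $\LL^{2}$ test functions. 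Combined with the identification of Proposition~\ref{Ident}, this density is precisely what upgrades the integral inequalities into the pointwise a.e.\ relations defining a strong solution.
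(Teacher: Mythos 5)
Your proof is correct and follows the same overall route as the paper's: Green's formula combined with Proposition~\ref{Ident} to reduce~\eqref{FormuFaibleDNS} to a boundary inequality over $\Gamma_{\mathrm{N}}\cup\Gamma_{\mathrm{T}}$, the density of $\mathcal{K}^{1/2}(\Gamma)$ in $\mathcal{K}^{0}(\Gamma)$ to enlarge the class of admissible traces, and then localized choices of $v$ to extract the pointwise conditions piece by piece. The one genuine deviation is the step $\partial_{\nn}u\leq h$ on $\Gamma_{\mathrm{T}_{\mathrm{S-}}}$ (and symmetrically on $\Gamma_{\mathrm{T}_{\mathrm{S+}}}$): the paper obtains it by a Lebesgue-point argument, testing against indicators of small balls $B_{\Gamma}(s,\varepsilon)\subset\Gamma_{\mathrm{T}_{\mathrm{S-}}}$, which is exactly where condition~(i) of Definition~\ref{regulieresens2} is invoked; you instead test against $v=u+\phi$ with an arbitrary nonpositive $\phi\in\LL^{2}(\Gamma_{\mathrm{T}_{\mathrm{S-}}})$ extended by $u$ elsewhere, which gives $\int_{A}(\partial_{\nn}u-h)\leq 0$ for every measurable $A\subset\Gamma_{\mathrm{T}_{\mathrm{S-}}}$ and hence the a.e.\ inequality directly. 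Your variant is slightly cleaner and in fact only uses the density condition~(ii) of consistency, never condition~(i); the role you assign to condition~(i) in your closing paragraph (``localizing without interference between neighbouring pieces'') is therefore not what your own argument actually relies on, but this is a misattribution of a hypothesis you do not need, not a gap. The remaining steps --- the equalities $\partial_{\nn}u=k$ on $\Gamma_{\mathrm{N}}$ and $\partial_{\nn}u=h$ on $\Gamma_{\mathrm{T}_{\mathrm{S}_{\mathrm{N}}}}$ via two-sided perturbations, and the complementarity via the admissible choices $v=0$ and $v=2u$ on the sign-constrained pieces together with the constant sign of $u(\partial_{\nn}u-h)$ --- coincide with the paper's, as does your direct verification of the first implication.
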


Using the classical characterization of the projection operator, we obtain the following existence/uniqueness result.
\begin{myProp}\label{existenceunicitepbDNS}
The Signorini problem~\eqref{PbDirichletNeumannSigno} admits a unique weak solution $u\in\HH^{1}_{\mathrm{D}}(\Omega)$ which is given by
$$
    \displaystyle u=\mathrm{proj}_{\mathcal{K}^{1}(\Omega)}(F),
$$
where $F\in\HH^{1}_{\mathrm{D}}(\Omega)$ is the solution to the Dirichlet-Neumann problem~\eqref{PbNeumannDirichlet}, and $\mathrm{proj}_{\mathcal{K}^{1}(\Omega)}$ is the classical projection operator onto the nonempty closed convex subset $\mathcal{K}^{1}(\Omega)$ of $\HH^{1}_{\mathrm{D}}(\Omega)$ for the scalar product~$\dual{\cdot}{\cdot}_{\HH^{1}_{\mathrm{D}}(\Omega)}$.
\end{myProp}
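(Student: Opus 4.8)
The plan is to recognise the weak Signorini formulation~\eqref{FormuFaibleDNS} as exactly the variational inequality characterising the metric projection of $F$ onto $\mathcal{K}^{1}(\Omega)$ for the scalar product $\dual{\cdot}{\cdot}_{\HH^{1}_{\mathrm{D}}(\Omega)}$. Since $\mathcal{K}^{1}(\Omega)$ is a nonempty closed convex subset of the Hilbert space $\HH^{1}_{\mathrm{D}}(\Omega)$ (as recorded in Definition~\ref{WeaksolutionSigno}) and $F$ is the unique Dirichlet--Neumann solution provided by Proposition~\ref{existenceunicitéDN}, the classical projection theorem guarantees that $\mathrm{proj}_{\mathcal{K}^{1}(\Omega)}(F)$ exists and is the unique element $u\in\mathcal{K}^{1}(\Omega)$ satisfying the obtuse-angle inequality
$$
\dual{F-u}{v-u}_{\HH^{1}_{\mathrm{D}}(\Omega)}\leq 0, \qquad \forall v\in\mathcal{K}^{1}(\Omega).
$$
Thus existence, uniqueness, and the announced formula will all follow at once, provided this inequality is shown to coincide with~\eqref{FormuFaibleDNS}.

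The key step is the substitution of $F$. First I would note that, for any $u,v\in\mathcal{K}^{1}(\Omega)\subset\HH^{1}_{\mathrm{D}}(\Omega)$, the difference $v-u$ again belongs to the vector space $\HH^{1}_{\mathrm{D}}(\Omega)$ and is therefore admissible as a test function in the weak Dirichlet--Neumann formulation~\eqref{FaibleDirichNeumann}. Expanding the scalar product according to its definition and then applying~\eqref{FaibleDirichNeumann} with $\varphi=v-u$ gives
\begin{align*}
\dual{F-u}{v-u}_{\HH^{1}_{\mathrm{D}}(\Omega)}
&=\int_{\Omega}\nabla F\cdot\nabla(v-u)-\int_{\Omega}\nabla u\cdot\nabla(v-u)\\
&=\int_{\Omega}f(v-u)+\int_{\Gamma_{\mathrm{N}}}k(v-u)+\int_{\Gamma_{\mathrm{T}}}h(v-u)-\int_{\Omega}\nabla u\cdot\nabla(v-u).
\end{align*}
Consequently, the projection inequality $\dual{F-u}{v-u}_{\HH^{1}_{\mathrm{D}}(\Omega)}\leq 0$ holding for every $v\in\mathcal{K}^{1}(\Omega)$ is, after moving the last term to the other side, term by term identical to~\eqref{FormuFaibleDNS}. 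This establishes that $u\in\mathcal{K}^{1}(\Omega)$ solves the weak Signorini problem if and only if $u=\mathrm{proj}_{\mathcal{K}^{1}(\Omega)}(F)$, which simultaneously yields the existence, the uniqueness, and the explicit characterisation.

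There is no serious obstacle in this argument: once the convex-analytic facts about $\mathcal{K}^{1}(\Omega)$ and the availability of $F$ are granted, the proof reduces to the purely algebraic identification carried out above. The only point deserving a word of care is the admissibility of $v-u$ as a test function in~\eqref{FaibleDirichNeumann}, which is immediate precisely because $\mathcal{K}^{1}(\Omega)$ sits inside the \emph{linear} space $\HH^{1}_{\mathrm{D}}(\Omega)$ and is hence closed under differences of its elements within that space; this is what makes the weak Signorini variational inequality the Euler inequality of a genuine projection problem.
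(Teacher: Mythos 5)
Your argument is correct and is essentially the paper's own proof: both identify the weak Signorini inequality~\eqref{FormuFaibleDNS}, via the weak Dirichlet--Neumann formulation~\eqref{FaibleDirichNeumann} tested against $v-u$, with the variational characterization $\dual{F-u}{v-u}_{\HH^{1}_{\mathrm{D}}(\Omega)}\leq 0$ of the projection onto $\mathcal{K}^{1}(\Omega)$, and then invoke the classical projection theorem for existence and uniqueness. No further comment is needed.
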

\subsubsection{A Tresca Friction Problem}\label{sectionproblèmedeTresca1ercas}
Finally let us consider the Tresca friction problem given by
\begin{equation}\tag{TP} \label{PbNeumannDirichletTresca}
\arraycolsep=2pt
\left\{
\begin{array}{rcll}
-\Delta u &	= & f   & \text{ in } \Omega , \\
u & = & 0  & \text{ on } \Gamma_{\mathrm{D}} ,\\
\partial_{\nn} u & = & k  & \text{ on } \Gamma_{\mathrm{N}} ,\\
|\partial_{\nn}u|\leq g \text{ and } u\partial_{\nn}u +g|u| & = & 0  & \text{ on } \Gamma_{\mathrm{T}}.
\end{array}
\right.
\end{equation}
We assume that almost every point of $\Gamma_{\mathrm{T}}$ are in $\mathrm{int}_{\Gamma}({\Gamma_{\mathrm{T}}})$ (see Remark~\ref{rmqhyptresca} for details). The difficulty of the above Tresca friction problem lies on the nonsmooth map $\left|\cdot\right|$ on the boundary~$\Gamma_{\mathrm{T}}$ which generates nonsmooth terms in the weak formulation~\eqref{FormuFaibleDNT} below. Therefore, to get the existence/uniqueness of the weak solution, we are led to use the notion of proximal operator from convex analysis (see Definition~\ref{proxi}).
\begin{myDefn}[Strong solution to the Tresca friction problem]
A strong solution to the Tresca friction problem~\eqref{PbNeumannDirichletTresca} is a function $u\in \HH^{1}(\Omega)$ such that $-\Delta u=f$ in $\mathcal{D}'(\Omega)$, $u=0$ almost everywhere on~$\Gamma_{\mathrm{D}}$, $\partial_{\nn}u\in\mathrm{L}^{2}(\Gamma_{\mathrm{N}}\cup\Gamma_{\mathrm{T}})$ with  $\partial_{\nn}u=k$ almost everywhere on $\Gamma_{\mathrm{N}}$, $|\partial_{\nn}u(s)|\leq g(s)$ and~$u(s)\partial_{\nn}u(s)+g(s)|u(s)|=0$ for almost all $s\in \Gamma_{\mathrm{T}}$.
\end{myDefn}
\begin{myDefn}[Weak solution to the Tresca friction problem]
A weak solution to the Tresca friction problem~\eqref{PbNeumannDirichletTresca} is a function $u\in \HH^{1}_{\mathrm{D}}(\Omega)$ such that
 \begin{equation}\label{FormuFaibleDNT}
     \displaystyle\int_{\Omega} \nabla u\cdot\nabla (v-u)+\int_{\Gamma_{\mathrm{T}}}g|v|-\int_{\Gamma_{\mathrm{T}}}g|u|\geq \int_{\Omega}f(v-u)+\int_{\Gamma_{\mathrm{N}}}k(v-u),\qquad \forall v\in \HH^{1}_{\mathrm{D}}(\Omega).
\end{equation}
\end{myDefn}
\begin{myProp}\label{Trescaequivalartc}
A function $u\in \HH^{1}(\Omega)$ is a strong solution to the Tresca friction problem~\eqref{PbNeumannDirichletTresca} if and only if $u$ is a weak solution to the Tresca friction problem~\eqref{PbNeumannDirichletTresca}.
\end{myProp}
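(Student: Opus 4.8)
The plan is to follow the scheme already used for the Dirichlet--Neumann equivalence (Proposition~\ref{DNequiart}), adapting it to accommodate the nonsmooth boundary term $\int_{\Gamma_{\mathrm{T}}} g|\cdot|$. For the implication strong $\Rightarrow$ weak, I would start from $-\Delta u = f$ in $\mathcal{D}'(\Omega)$, upgrade it to an identity in $\LL^2(\Omega)$ since $f\in\LL^2(\Omega)$, and apply the Green formula (Proposition~\ref{Green}) with the test function $v-u$ for $v\in\HH^1_{\mathrm{D}}(\Omega)$. As $v-u\in\HH^{1/2}_{00}(\Gamma_{\mathrm{N}}\cup\Gamma_{\mathrm{T}})$ and $\partial_{\nn}u\in\LL^2(\Gamma_{\mathrm{N}}\cup\Gamma_{\mathrm{T}})$, Proposition~\ref{Ident} identifies the duality pairing with the associated $\LL^2$ integral, and using $\partial_{\nn}u=k$ on $\Gamma_{\mathrm{N}}$ reduces~\eqref{FormuFaibleDNT} to showing
$$
\int_{\Gamma_{\mathrm{T}}}\left(\partial_{\nn}u\,(v-u)+g|v|-g|u|\right)\geq 0 .
$$
Substituting the complementarity relation $u\,\partial_{\nn}u=-g|u|$ collapses the left-hand side to $\int_{\Gamma_{\mathrm{T}}}\left(\partial_{\nn}u\,v+g|v|\right)$, which is nonnegative a.e.\ because $\partial_{\nn}u\,v\geq-|\partial_{\nn}u|\,|v|\geq-g|v|$ thanks to $|\partial_{\nn}u|\leq g$. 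This closes the easy direction.

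For weak $\Rightarrow$ strong, I would first test with $v=u\pm\varphi$, $\varphi\in\mathcal{D}(\Omega)$; since $\varphi$ has zero trace the $\Gamma_{\mathrm{T}}$ terms cancel and one recovers $-\Delta u=f$ in $\mathcal{D}'(\Omega)$, hence in $\LL^2(\Omega)$. Inserting the Green formula into~\eqref{FormuFaibleDNT} then gives, for every $v\in\HH^1_{\mathrm{D}}(\Omega)$,
$$
\dual{\partial_{\nn}u}{v-u}_{(\HH^{1/2}_{00}(\Gamma_{\mathrm{N}}\cup\Gamma_{\mathrm{T}}))'\times\HH^{1/2}_{00}(\Gamma_{\mathrm{N}}\cup\Gamma_{\mathrm{T}})}+\int_{\Gamma_{\mathrm{T}}}g|v|-\int_{\Gamma_{\mathrm{T}}}g|u|\geq\int_{\Gamma_{\mathrm{N}}}k(v-u).
$$

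The main obstacle is establishing $\partial_{\nn}u\in\LL^2(\Gamma_{\mathrm{N}}\cup\Gamma_{\mathrm{T}})$, which is where the nonsmooth term must be controlled. Writing $w$ for the trace of $v-u$ and using the two test functions $u+w$ and $u-w$ together with the elementary bound $\big||u+w|-|u|\big|\leq|w|$, I would derive the two-sided estimate
$$
\left|\dual{\partial_{\nn}u}{w}_{(\HH^{1/2}_{00})'\times\HH^{1/2}_{00}}\right|\leq\left(\norm{k}_{\LL^2(\Gamma_{\mathrm{N}})}+\norm{g}_{\LL^2(\Gamma_{\mathrm{T}})}\right)\norm{w}_{\LL^2(\Gamma_{\mathrm{N}}\cup\Gamma_{\mathrm{T}})},
$$
so that Proposition~\ref{Ident} identifies $\partial_{\nn}u$ with an element of $\LL^2(\Gamma_{\mathrm{N}}\cup\Gamma_{\mathrm{T}})$ and the pairing becomes a genuine integral. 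The inequality then depends only on the $\LL^2(\Gamma_{\mathrm{N}}\cup\Gamma_{\mathrm{T}})$-trace of $v$; since such traces are dense in $\LL^2(\Gamma_{\mathrm{N}}\cup\Gamma_{\mathrm{T}})$ (Proposition~\ref{injections}) and every term is $\LL^2$-continuous, it extends to all $\phi\in\LL^2(\Gamma_{\mathrm{N}}\cup\Gamma_{\mathrm{T}})$.

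Finally I would extract the pointwise conditions. Choosing $\phi=u$ on $\Gamma_{\mathrm{T}}$ and $\phi$ free on $\Gamma_{\mathrm{N}}$ gives, by linearity, $\partial_{\nn}u=k$ a.e.\ on $\Gamma_{\mathrm{N}}$, after which the inequality reads $\int_{\Gamma_{\mathrm{T}}}\left(\partial_{\nn}u\,\phi+g|\phi|\right)\geq\int_{\Gamma_{\mathrm{T}}}\left(\partial_{\nn}u\,u+g|u|\right)$ for all $\phi\in\LL^2(\Gamma_{\mathrm{T}})$, i.e.\ $u$ minimizes the convex integral functional $\phi\mapsto\int_{\Gamma_{\mathrm{T}}}\left(\partial_{\nn}u\,\phi+g|\phi|\right)$. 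Localizing to arbitrary measurable subsets of $\Gamma_{\mathrm{T}}$ shows that $u(s)$ minimizes $t\mapsto\partial_{\nn}u(s)\,t+g(s)|t|$ for a.e.\ $s$; finiteness of the infimum forces $|\partial_{\nn}u(s)|\leq g(s)$, and the optimality condition $-\partial_{\nn}u(s)\in g(s)\,\partial|\mathord{\cdot}|(u(s))$ (using the subdifferential of $|\mathord{\cdot}|$, cf.\ Example~\ref{epidiffabs}) yields $u(s)\,\partial_{\nn}u(s)+g(s)|u(s)|=0$ a.e.\ on $\Gamma_{\mathrm{T}}$, which are exactly the strong Tresca conditions. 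I expect the $\LL^2$-regularity of $\partial_{\nn}u$ in the presence of $\int_{\Gamma_{\mathrm{T}}}g|v|$ to be the delicate point, the remaining steps being bookkeeping analogous to Proposition~\ref{DNequiart}.
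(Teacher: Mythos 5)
Your proof is correct, and its overall skeleton coincides with the paper's: the strong-to-weak direction via Green's formula and the pointwise bound $\partial_{\nn}u\,v\geq -g|v|$, then for the converse the recovery of $-\Delta u=f$, the one-/two-sided estimate on the duality pairing giving $\partial_{\nn}u\in\LL^{2}(\Gamma_{\mathrm{N}}\cup\Gamma_{\mathrm{T}})$ via Proposition~\ref{Ident}, the density extension to $\LL^{2}(\Gamma_{\mathrm{N}}\cup\Gamma_{\mathrm{T}})$, and the identification $\partial_{\nn}u=k$ on $\Gamma_{\mathrm{N}}$ are all as in the paper. Where you genuinely diverge is the extraction of the Tresca law on $\Gamma_{\mathrm{T}}$. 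The paper tests the localized inequality against indicator functions of small balls $B_{\Gamma}(s,\varepsilon)\subset\Gamma_{\mathrm{T}}$ centered at Lebesgue points of $\partial_{\nn}u$ and $g$, lets $\varepsilon\to 0^{+}$ to get $|\partial_{\nn}u(s)|\leq g(s)$, and then uses the two special test traces $v=u$ and $v=2u$ on $\Gamma_{\mathrm{T}}$ to force $\int_{\Gamma_{\mathrm{T}}}\bigl(u\,\partial_{\nn}u+g|u|\bigr)=0$; this is precisely the step that consumes the standing hypothesis that almost every point of $\Gamma_{\mathrm{T}}$ lies in $\mathrm{int}_{\Gamma}(\Gamma_{\mathrm{T}})$ (cf.\ Remark~\ref{rmqhyptresca}). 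You instead observe that $u_{|\Gamma_{\mathrm{T}}}$ globally minimizes the convex integral functional $\phi\mapsto\int_{\Gamma_{\mathrm{T}}}\bigl(\partial_{\nn}u\,\phi+g|\phi|\bigr)$ over $\LL^{2}(\Gamma_{\mathrm{T}})$ and localize this to a.e.\ pointwise minimality of $x\mapsto\partial_{\nn}u(s)x+g(s)|x|$, whence $|\partial_{\nn}u(s)|\leq g(s)$ (finiteness of the infimum) and $u(s)\partial_{\nn}u(s)+g(s)|u(s)|=0$ (the minimum value is $0$). This is a legitimate alternative: the localization can be made rigorous by testing with $\phi=q$ on a measurable subset and $\phi=u$ elsewhere, for rational $q$, and it has the advantage of not requiring the interiority assumption on $\Gamma_{\mathrm{T}}$ nor any Lebesgue-point argument; the paper's ball-based argument is more elementary and self-contained but needs that topological hypothesis. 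The only point you should spell out is the measurable-localization step itself, which you currently assert in one clause.
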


From definition of the proximal operator (see Definition~\ref{proxi}), one deduces the following existence/uniqueness result.
\begin{myProp}\label{existenceunicitePbDNT}
The Tresca friction problem~\eqref{PbNeumannDirichletTresca} admits a unique solution $u\in\HH^{1}_{\mathrm{D}}(\Omega)$ given by
$$
\displaystyle u=\mathrm{prox}_{\phi}(F),
$$
where $F\in\HH^{1}_{\mathrm{D}}(\Omega)$ is the solution to the Dirichlet-Neumann problem~\eqref{PbNeumannDirichlet} with $h=0$ almost everywhere on $\Gamma_{\mathrm{T}}$, and where $\mathrm{prox}_{\phi}$ stands for the proximal operator associated with the Tresca friction functional $\phi$ defined by 
\begin{equation}\label{fonctionnelleTrescaGammaT}
\displaystyle\fonction{\phi}{\HH^{1}_{\mathrm{D}}(\Omega)}{\R}{v}{\displaystyle \phi(v):=\int_{\Gamma_{\mathrm{T}}}g|v| .}
\end{equation}
\end{myProp}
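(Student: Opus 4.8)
The plan is to recognize that the weak formulation~\eqref{FormuFaibleDNT} is nothing but the defining inclusion of the proximal operator $\mathrm{prox}_{\phi}$ at the point $F$, once one exploits that the scalar product on $\HH^{1}_{\mathrm{D}}(\Omega)$ is exactly $\dual{u}{v}_{\HH^{1}_{\mathrm{D}}(\Omega)}=\int_{\Omega}\nabla u\cdot\nabla v$. First I would check that the functional $\phi$ from~\eqref{fonctionnelleTrescaGammaT} is well-behaved enough for $\mathrm{prox}_{\phi}$ to make sense. It is finite everywhere and convex (the map $|\mathord{\cdot}|$ is convex and the integral is linear and monotone), and using the continuous embedding $\HH^{1}(\Omega)\hookrightarrow\LL^{2}(\Gamma)$ of Proposition~\ref{injections} together with $g\in\LL^{2}(\Gamma_{\mathrm{T}})$, the reverse triangle inequality and Cauchy--Schwarz give $|\phi(v)-\phi(w)|\leq\int_{\Gamma_{\mathrm{T}}}g\,\big||v|-|w|\big|\leq\norm{g}_{\LL^{2}(\Gamma_{\mathrm{T}})}\norm{v-w}_{\LL^{2}(\Gamma_{\mathrm{T}})}\leq C\norm{g}_{\LL^{2}(\Gamma_{\mathrm{T}})}\norm{v-w}_{\HH^{1}_{\mathrm{D}}(\Omega)}$, so $\phi$ is Lipschitz continuous, hence a proper lower semi-continuous convex function on $\HH^{1}_{\mathrm{D}}(\Omega)$. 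By the discussion following Definition~\ref{proxi}, $\mathrm{prox}_{\phi}$ is then well-defined and single-valued, which already secures existence and uniqueness of the candidate $u:=\mathrm{prox}_{\phi}(F)$.

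Next I would unfold the proximal characterization into the variational inequality. By Definition~\ref{proxi}, $u=\mathrm{prox}_{\phi}(F)$ is equivalent to $F-u\in\partial\phi(u)$, which by Definition~\ref{sousdiff} reads
\[
\dual{F-u}{v-u}_{\HH^{1}_{\mathrm{D}}(\Omega)}\leq \phi(v)-\phi(u),\qquad\forall v\in\HH^{1}_{\mathrm{D}}(\Omega).
\]
Writing out the scalar product and splitting $\int_{\Omega}\nabla(F-u)\cdot\nabla(v-u)=\int_{\Omega}\nabla F\cdot\nabla(v-u)-\int_{\Omega}\nabla u\cdot\nabla(v-u)$, I would invoke that $F$ solves the Dirichlet--Neumann problem~\eqref{PbNeumannDirichlet} with $h=0$, so that $\int_{\Omega}\nabla F\cdot\nabla(v-u)=\int_{\Omega}f(v-u)+\int_{\Gamma_{\mathrm{N}}}k(v-u)$ for the admissible test function $v-u\in\HH^{1}_{\mathrm{D}}(\Omega)$. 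Substituting and rearranging turns the subdifferential inequality precisely into Inequality~\eqref{FormuFaibleDNT}. Since each of these steps is an equivalence, $u\in\HH^{1}_{\mathrm{D}}(\Omega)$ is a weak solution to the Tresca friction problem~\eqref{PbNeumannDirichletTresca} if and only if $u=\mathrm{prox}_{\phi}(F)$; combined with the single-valuedness of $\mathrm{prox}_{\phi}$, this yields both existence and uniqueness.

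The rearrangement of the subdifferential inequality into~\eqref{FormuFaibleDNT} is routine, and mirrors the role played by the projection in Proposition~\ref{existenceunicitepbDNS}, with $\mathrm{prox}_{\phi}$ replacing $\mathrm{proj}_{\mathcal{K}^{1}(\Omega)}$. The only point requiring genuine care is the verification that $\phi$ is lower semi-continuous on $\HH^{1}_{\mathrm{D}}(\Omega)$, since this is what legitimizes Definition~\ref{proxi} (and the maximal monotonicity of $\partial\phi$). This is exactly where the embedding $\HH^{1}(\Omega)\hookrightarrow\LL^{2}(\Gamma)$ is essential, as it transfers the $\LL^{2}(\Gamma_{\mathrm{T}})$-integrability of the threshold $g$ into continuity of $\phi$ for the $\HH^{1}_{\mathrm{D}}(\Omega)$-topology.
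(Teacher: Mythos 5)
Your proposal is correct and follows essentially the same route as the paper: establish that $\phi$ is a proper lower semi-continuous convex function via the embedding $\HH^{1}(\Omega)\hookrightarrow\LL^{2}(\Gamma)$, then show that the weak formulation~\eqref{FormuFaibleDNT} is equivalent to $F-u\in\partial\phi(u)$, i.e.\ to $u=\mathrm{prox}_{\phi}(F)$, using the weak formulation of the Dirichlet--Neumann problem with $h=0$. The only (harmless) difference is that you prove Lipschitz continuity of $\phi$ explicitly, where the paper simply invokes the embedding.
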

\begin{myRem}\label{rmqhyptresca}
The assumption that almost every point of $\Gamma_{\mathrm{T}}$ are in $\mathrm{int}_{\Gamma}({\Gamma_{\mathrm{T}}})$ is only used to prove that a weak solution to the Tresca friction problem~\eqref{PbNeumannDirichletTresca} is also a strong solution, more precisely to get the Tresca's friction law on~$\Gamma_{\mathrm{T}}$. Of course, some sets do not satisfy this assumption (for instance the well-known Smith–Volterra–Cantor set (see, e.g,~\cite[Example 6.15 Section 6 Chapter 1]{ALIP})). Nevertheless it is trivially satisfied in most of standard cases found in practice. Furthermore, if this assumption is not satisfied, one can also prove that the weak solution to the Tresca friction problem~\eqref{PbNeumannDirichletTresca} is a strong solution by adding the assumption that $g\in\LL^{\infty}(\Gamma_{\mathrm{T}})$, and by using the isometry between the dual of~$(\LL^{1}(\Gamma_{\mathrm{T}}), \| \cdot  \|_{(\LL^{1}(\Gamma_{\mathrm{T}}))_{g}})$ and $ \LL^{\infty}(\Gamma_{\mathrm{T}})$ (with its standard norm  $\| \cdot \|_{\LL^{\infty}(\Gamma_{\mathrm{T}})}$) where~$\| \cdot \|_{\LL^{1}(\Gamma_{\mathrm{T}})_{g}}$ is the norm defined by
$$
\fonction{\left \| \cdot \right \|_{\LL^{1}(\Gamma_{\mathrm{T}})_{g}}}{\LL^{1}(\Gamma_{\mathrm{T}})}{\R}{v}{\displaystyle\int_{\Gamma_{\mathrm{T}}}g\left| v \right |.}
$$
The details are left to the reader.
\end{myRem}
\subsection{Sensitivity Analysis of the Tresca Friction Problem}\label{section4}
In this section we consider the parameterized Tresca friction problem given by
 \begin{equation}\tag{TP$_{t}$} \label{PbNeumannDirichletTrescaPara}
\arraycolsep=2pt
\left\{
\begin{array}{rcll}
-\Delta u_{t} & = & f_{t}   & \text{ in } \Omega , \\
u_{t} & = & 0  & \text{ on } \Gamma_{\mathrm{D}} ,\\
\partial_{\nn} u_{t} & = & k_{t}  & \text{ on } \Gamma_{\mathrm{N}} ,\\
|\partial_{\nn}u_{t}|\leq g_{t} \text{ and } u_{t}\partial_{\nn}u_{t}+g_{t}|u_{t}| & = & 0  & \text{ on } \Gamma_{\mathrm{T}},
\end{array}
\right.
\end{equation}
where $f_{t}\in\LL^{2}(\Omega)$, $k_{t}\in\LL^{2}(\Gamma_{\mathrm{N}})$ and $g_{t}\in\LL^{2}(\Gamma_{\mathrm{T}})$, for all $t\geq0$. We assume that, for all $t\geq0$, $g_{t}>0$ almost everywhere on $\Gamma_{\mathrm{T}}$, and almost every point of $\Gamma_{\mathrm{T}}$ belongs to $\mathrm{int}_{\Gamma}({\Gamma_{\mathrm{T}}})$.

Proposition~\ref{existenceunicitePbDNT} claims that the solution to the Tresca friction problem~\eqref{PbNeumannDirichletTresca} is related to the solution to the Dirichlet-Neumann problem~\eqref{PbNeumannDirichlet} by the proximal operator. Therefore let us start with the sensitivity analysis of the Dirichlet-Neumann problem~\eqref{PbNeumannDirichlet}, which will be useful for the sensitivity analysis of the Tresca friction problem~\eqref{PbNeumannDirichletTresca}. The following proposition is easily proved using the linearity of the Dirichlet-Neumann problem~\eqref{PbNeumannDirichlet} and Proposition~\ref{existenceunicitéDN}.
\begin{myProp}\label{AnalysesensiDN}
Let $F_{t} \in \HH^{1}_{\mathrm{D}}(\Omega)$ be the unique solution to the parameterized Dirichlet-Neumann problem given by
\begin{equation}\tag{DN$_{t}$} \label{PbNeumannDirichletPara}
\arraycolsep=2pt
\left\{
\begin{array}{rcll}
-\Delta F_{t} & = & f_{t}   & \text{ in } \Omega , \\
F_{t} & = & 0  & \text{ on } \Gamma_{\mathrm{D}} ,\\
\partial_{\nn} F_{t} & = &  k_{t}  & \text{ on } \Gamma_{\mathrm{N}} ,\\
\partial_{\nn} F_{t} & = & 0  & \text{ on } \Gamma_{\mathrm{T}},
\end{array}
\right.
\end{equation}
for all $t\geq0$. If the two conditions
\begin{enumerate}
    \item the map $t\in\mathbb{R}^{+}\mapsto f_{t}\in \mathrm{L}^{2}(\Omega)$ is differentiable at $t=0$, with its derivative denoted by~$f'_{0}\in\LL^{2}(\Omega)$;
    \item the map $t\in\mathbb{R}^{+}\mapsto k_{t}\in \LL^{2}(\Gamma_{\mathrm{N}})$ is differentiable at $t=0$, with its derivative denoted by~$k'_{0}\in\LL^{2}(\Gamma_{\mathrm{N}})$;
\end{enumerate}
are satisfied, then the map $t\in\mathbb{R}^{+}\mapsto F_{t}\in \HH^{1}_{\mathrm{D}}(\Omega)$ is differentiable at $t=0$, and its derivative, denoted by~$F'_{0}\in \HH^{1}_{\mathrm{D}}(\Omega)$, is the unique solution to the Dirichlet-Neumann problem given by
\begin{equation}\tag{DN$_{0}'$}\label{PbNeumannDirichletDerivhomogene}
\arraycolsep=2pt
\left\{
\begin{array}{rcll}
-\Delta F'_{0} & = & f'_{0}   & \text{ in } \Omega , \\
F'_{0} & = & 0  & \text{ on } \Gamma_{\mathrm{D}} ,\\
\partial_{\nn} F'_{0} & = & k'_{0}  & \text{ on } \Gamma_{\mathrm{N}} ,\\
\partial_{\nn} F'_{0} & = & 0  & \text{ on } \Gamma_{\mathrm{T}}.
\end{array}
\right.
\end{equation}
\end{myProp}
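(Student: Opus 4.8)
The plan is to exploit the linearity of the Dirichlet-Neumann problem~\eqref{PbNeumannDirichlet} together with the stability estimate of Proposition~\ref{existenceunicitéDN}. First I would observe that the candidate derivative $F'_{0}$ is well defined: applying Proposition~\ref{existenceunicitéDN} with source term $f'_{0}\in\LL^{2}(\Omega)$, Neumann datum $k'_{0}\in\LL^{2}(\Gamma_{\mathrm{N}})$ and $h=0$ on $\Gamma_{\mathrm{T}}$ produces a unique $F'_{0}\in\HH^{1}_{\mathrm{D}}(\Omega)$ solving~\eqref{PbNeumannDirichletDerivhomogene}. Since differentiability of the map $t\mapsto F_{t}$ at $t=0$ means precisely that the difference quotient converges in $\HH^{1}_{\mathrm{D}}(\Omega)$, it then remains to prove that $\tfrac{F_{t}-F_{0}}{t}\to F'_{0}$ in $\HH^{1}_{\mathrm{D}}(\Omega)$ as $t\to 0^{+}$.

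The key step is to identify this difference quotient, after correction by $F'_{0}$, with the weak solution of an auxiliary Dirichlet-Neumann problem. By Definition~\ref{SolufaibleDN} (with $h=0$ on $\Gamma_{\mathrm{T}}$), each of $F_{t}$, $F_{0}$ and $F'_{0}$ satisfies its corresponding weak formulation~\eqref{FaibleDirichNeumann}. Subtracting the formulation for $F_{0}$ from that for $F_{t}$, dividing by $t>0$ and subtracting the formulation for $F'_{0}$, the bilinearity of the gradient pairing and the linearity of the right-hand side give, for all $v\in\HH^{1}_{\mathrm{D}}(\Omega)$,
\begin{equation*}
\int_{\Omega}\nabla\left(\frac{F_{t}-F_{0}}{t}-F'_{0}\right)\cdot\nabla v=\int_{\Omega}\left(\frac{f_{t}-f_{0}}{t}-f'_{0}\right)v+\int_{\Gamma_{\mathrm{N}}}\left(\frac{k_{t}-k_{0}}{t}-k'_{0}\right)v.
\end{equation*}
In other words, $\tfrac{F_{t}-F_{0}}{t}-F'_{0}$ is exactly the weak solution to the Dirichlet-Neumann problem~\eqref{PbNeumannDirichlet} with source $\tfrac{f_{t}-f_{0}}{t}-f'_{0}\in\LL^{2}(\Omega)$, Neumann datum $\tfrac{k_{t}-k_{0}}{t}-k'_{0}\in\LL^{2}(\Gamma_{\mathrm{N}})$ and $h=0$ on $\Gamma_{\mathrm{T}}$.

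Finally I would invoke the stability estimate of Proposition~\ref{existenceunicitéDN} applied to this auxiliary problem, which yields a constant $C\geq 0$ depending only on $\Omega$ such that
\begin{equation*}
\norm{\frac{F_{t}-F_{0}}{t}-F'_{0}}_{\HH^{1}_{\mathrm{D}}(\Omega)}\leq C\left(\norm{\frac{f_{t}-f_{0}}{t}-f'_{0}}_{\LL^{2}(\Omega)}+\norm{\frac{k_{t}-k_{0}}{t}-k'_{0}}_{\LL^{2}(\Gamma_{\mathrm{N}})}\right).
\end{equation*}
The two hypotheses, namely that $t\mapsto f_{t}$ and $t\mapsto k_{t}$ are differentiable at $t=0$ in $\LL^{2}(\Omega)$ and $\LL^{2}(\Gamma_{\mathrm{N}})$ respectively, force the right-hand side to tend to $0$ as $t\to 0^{+}$, proving that $t\mapsto F_{t}$ is differentiable at $t=0$ with derivative $F'_{0}$. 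I do not anticipate any genuine obstacle here: the entire argument rests on the linearity of~\eqref{PbNeumannDirichlet}, which makes the solution map linear in the data and so lets the difference-quotient operation commute with it, transferring differentiability of the data verbatim to differentiability of the solution through the continuous-dependence estimate.
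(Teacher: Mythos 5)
Your proof is correct and follows exactly the route the paper indicates: the paper omits the proof, stating only that it follows from the linearity of the Dirichlet--Neumann problem and Proposition~\ref{existenceunicitéDN}, which is precisely your argument (the corrected difference quotient solves the auxiliary problem with difference-quotient data, and the stability estimate transfers the differentiability of the data to the solution). No gaps.
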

\subsubsection{Parameterized Tresca Friction Functional and Twice Epi-Differentiability}\label{sectionfonctiontrescaepideriv}
Let us come back to the parameterized Tresca friction problem~\eqref{PbNeumannDirichletTrescaPara}. The Tresca friction functional, defined in~\eqref{fonctionnelleTrescaGammaT}, depends now on the parameter $t\geq0$. Precisely we are led to define the parameterized Tresca friction functional given by
\begin{equation}\label{fonctionnelledeTrescaparacas2}
\displaystyle\fonction{\Phi}{\mathbb{R}^{+}\times \HH^{1}_{\mathrm{D}}(\Omega)}{\R}{(t,w)}{\displaystyle \Phi(t,w):=\int_{\Gamma_{\mathrm{T}}}g_{t}|w|.}
\end{equation}
Note that, for all $t\geq0$, $\Phi(t,\cdot)$ is a proper lower semi-continuous convex function on $\HH^{1}_{\mathrm{D}}(\Omega)$ and, from Proposition~\ref{existenceunicitePbDNT}, the unique solution to the parameterized Tresca friction problem~\eqref{PbNeumannDirichletTrescaPara} is given by
\begin{equation}\label{uproxGcas2}
   \displaystyle u_{t}=\mathrm{prox}_{\Phi(t,\mathord{\cdot})}(F_{t}),
\end{equation}
where $F_{t}$ is the unique solution to the parameterized Dirichlet-Neumann problem~\eqref{PbNeumannDirichletPara}.

As we can see in Equality~\eqref{uproxGcas2}, the proximal operator~$\mathrm{prox}_{\Phi(t,\mathord{\cdot})}$ depends on the parameter~$t\geq0$. This leads us to use Theorem~\ref{TheoABC2018} (extracted from~\cite{8AB}) which characterizes the derivative of a map given by a parameterized proximal operator, using the notion of twice epi-differentiability depending on a parameter (see Definition~\ref{epidiffpara}). Let us underline that this is an important difference with the previous paper~\cite{4ABC}, where the proximal operator was associated to a functional that did not depend on the parameter $t\geq0$, therefore the classical notion of twice epi-differentiability introduced by R.T. Rockafellar (see~\cite{Rockafellar}) was sufficient.

Let us prepare the background for the twice epi-differentiability of the parameterized Tresca friction functional defined in~\eqref{fonctionnelledeTrescaparacas2}. More specifically let us start with the characterization of the convex subdifferential of $\Phi(0,\cdot)$ (see Definition~\ref{sousdiff}). To this aim, we introduce an auxiliary problem defined, for all~$u\in\HH^{1}_{\mathrm{D}}(\Omega)$, by
\begin{equation}\tag{AP$_{u}$}\label{PbannexesousdiffDNT}
\arraycolsep=2pt
\left\{
\begin{array}{rl}
-\Delta v  =  0   & \text{ in } \Omega , \\
v  =  0 & \text{ on } \Gamma_{\mathrm{D}} , \\
\partial_{\nn}v  =  0 & \text{ on } \Gamma_{\mathrm{N}} , \\
\partial_{\nn}v(s) \in  g_{0}(s)\partial |\mathord{\cdot}|(u(s))  & \text{ on } \Gamma_{\mathrm{T}},\\
\end{array}
\right.
\end{equation}
where, for almost all $s\in\Gamma_{\mathrm{T}}$, $\partial|\mathord{\cdot}|(u(s))$ stands for the convex subdifferential of the classical absolute value map $\left|\cdot\right|  :  \mathbb{R}\rightarrow\mathbb{R}$ at $u(s)\in\R$.
For a given $u\in \HH^{1}_{\mathrm{D}}(\Omega)$, a solution to this problem~\eqref{PbannexesousdiffDNT} is a function~$v\in\HH^{1}(\Omega)$ such that $-\Delta v =0$ in~$\mathcal{D}
'(\Omega)$, $v=0$ almost everywhere on $\Gamma_{\mathrm{D}}$, and also with~$\partial_{\nn}v\in\mathrm{L}^{2}(\Gamma_{\mathrm{N}}\cup\Gamma_{\mathrm{T}})$, $\partial_{\nn}v=0$ almost everywhere on $\Gamma_{\mathrm{N}}$, and $\partial_{\nn}v(s)\in g_{0}(s)\partial |\mathord{\cdot} |(u(s))$ for almost all $s\in\Gamma_{\mathrm{T}}$.
\begin{myLem}\label{lemmeAnnexeSousdiff}
Let $u\in \HH^{1}_{\mathrm{D}}(\Omega)$. Then
\begin{center}
    $\partial \Phi(0,\cdot)(u)=$ the set of solutions to Problem~\eqref{PbannexesousdiffDNT}.
\end{center}
\end{myLem}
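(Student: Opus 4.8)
The plan is to prove the two inclusions separately, starting from the observation that, by Definition~\ref{sousdiff} and the expression of the scalar product on $\HH^{1}_{\mathrm{D}}(\Omega)$, a function $v\in\HH^{1}_{\mathrm{D}}(\Omega)$ belongs to $\partial\Phi(0,\cdot)(u)$ if and only if $\int_{\Omega}\nabla v\cdot\nabla(z-u)\leq\int_{\Gamma_{\mathrm{T}}}g_{0}(|z|-|u|)$ for every $z\in\HH^{1}_{\mathrm{D}}(\Omega)$. The whole argument runs parallel to the (omitted) proofs of Propositions~\ref{DNequiart} and~\ref{Trescaequivalartc}, the only genuinely new ingredient being the pointwise recovery of the subdifferential of the absolute value.

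For the inclusion ``solutions $\subseteq$ subgradients'', I would take a solution $v$ to~\eqref{PbannexesousdiffDNT}. Since $-\Delta v=0\in\LL^{2}(\Omega)$, Green's formula (Proposition~\ref{Green}) applies; because $z-u\in\HH^{1}_{\mathrm{D}}(\Omega)$ has trace in $\HH^{1/2}_{00}(\Gamma_{\mathrm{N}}\cup\Gamma_{\mathrm{T}})$ while $\partial_{\nn}v\in\LL^{2}(\Gamma_{\mathrm{N}}\cup\Gamma_{\mathrm{T}})$, Proposition~\ref{Ident} converts the duality pairing into $\int_{\Gamma_{\mathrm{N}}\cup\Gamma_{\mathrm{T}}}\partial_{\nn}v\,(z-u)$, which reduces to $\int_{\Gamma_{\mathrm{T}}}\partial_{\nn}v\,(z-u)$ thanks to $\partial_{\nn}v=0$ a.e.\ on $\Gamma_{\mathrm{N}}$. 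The condition $\partial_{\nn}v(s)\in g_{0}(s)\partial|\mathord{\cdot}|(u(s))$ then gives, by the very definition of the subdifferential of $|\mathord{\cdot}|$, that $\partial_{\nn}v(s)(z(s)-u(s))\leq g_{0}(s)(|z(s)|-|u(s)|)$ for a.e.\ $s\in\Gamma_{\mathrm{T}}$, and integrating yields the desired variational inequality, so $v\in\partial\Phi(0,\cdot)(u)$.

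For the reverse inclusion, starting from $v\in\partial\Phi(0,\cdot)(u)$, I would first test with $z=u\pm\varphi$, $\varphi\in\MD(\Omega)$, so that the boundary term vanishes and $-\Delta v=0$ in $\MD'(\Omega)$, hence in $\LL^{2}(\Omega)$. Green's formula then gives $\dual{\partial_{\nn}v}{z-u}_{\HH^{-1/2}(\Gamma)\times\HH^{1/2}(\Gamma)}\leq\int_{\Gamma_{\mathrm{T}}}g_{0}(|z|-|u|)$; taking $z=u-\varphi$ with $\varphi\in\HH^{1/2}_{00}(\Gamma_{\mathrm{N}}\cup\Gamma_{\mathrm{T}})$ and bounding $|u-\varphi|-|u|\leq|\varphi|$ produces the $\LL^{2}$-continuity estimate required by Proposition~\ref{Ident}, whence $\partial_{\nn}v\in\LL^{2}(\Gamma_{\mathrm{N}}\cup\Gamma_{\mathrm{T}})$ and the inequality becomes $\int_{\Gamma_{\mathrm{N}}\cup\Gamma_{\mathrm{T}}}\partial_{\nn}v\,(z-u)\leq\int_{\Gamma_{\mathrm{T}}}g_{0}(|z|-|u|)$, which extends by density to all increments $w:=z-u$ in $\LL^{2}(\Gamma_{\mathrm{N}}\cup\Gamma_{\mathrm{T}})$. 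Restricting $w$ to functions supported in $\Gamma_{\mathrm{N}}$ and using both signs gives $\partial_{\nn}v=0$ a.e.\ on $\Gamma_{\mathrm{N}}$, leaving $\int_{\Gamma_{\mathrm{T}}}\partial_{\nn}v\,w\leq\int_{\Gamma_{\mathrm{T}}}g_{0}(|u+w|-|u|)$ for all $w\in\LL^{2}(\Gamma_{\mathrm{T}})$.

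The remaining and main step is the passage from this global inequality to the pointwise inclusion. Here I would fix $\lambda\in\R$, choose $w=\lambda\mathbf{1}_{B_{\Gamma}(s,\varepsilon)}$ with $B_{\Gamma}(s,\varepsilon)\subset\Gamma_{\mathrm{T}}$ (possible for a.e.\ $s$ since a.e.\ point of $\Gamma_{\mathrm{T}}$ lies in $\mathrm{int}_{\Gamma}(\Gamma_{\mathrm{T}})$), divide by $|B_{\Gamma}(s,\varepsilon)|$ and let $\varepsilon\to0^{+}$. At a point $s$ that is simultaneously a Lebesgue point of $\partial_{\nn}v$, of $g_{0}$ and of $g_{0}(|u+\lambda_{j}|-|u|)$ for every $\lambda_{j}$ in a fixed countable dense subset $\{\lambda_{j}\}\subset\R$, this yields $\lambda_{j}\,\partial_{\nn}v(s)\leq g_{0}(s)(|u(s)+\lambda_{j}|-|u(s)|)$; since both sides are continuous in $\lambda$, the inequality extends to all $\lambda\in\R$, which is exactly $\partial_{\nn}v(s)\in g_{0}(s)\partial|\mathord{\cdot}|(u(s))$. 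The delicate bookkeeping, namely arranging a single null set valid for all directions $\lambda$ and using the interior-point hypothesis to fit the test balls inside $\Gamma_{\mathrm{T}}$, is the only real obstacle; the rest is a direct adaptation of the equivalence arguments of Section~\ref{section5}. This shows that $v$ solves~\eqref{PbannexesousdiffDNT} and completes the proof.
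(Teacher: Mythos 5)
Your proof is correct and follows essentially the same route as the paper's: both inclusions are handled via Green's formula, Proposition~\ref{Ident}, a density argument passing from $\HH^{1/2}_{00}(\Gamma_{\mathrm{N}}\cup\Gamma_{\mathrm{T}})$ to $\LL^{2}(\Gamma_{\mathrm{N}}\cup\Gamma_{\mathrm{T}})$, and a Lebesgue-point localization on balls $B_{\Gamma}(s,\varepsilon)\subset\Gamma_{\mathrm{T}}$. The only (harmless) difference is in the final step: the paper tests with $\varphi$ equal to a constant value $x$ on the ball, so by linearity in $x$ the required Lebesgue points (of $\partial_{\nn}v$, $u\partial_{\nn}v$, $g_{0}$ and $g_{0}|u|$) form a single null-complement independent of $x$ and no countable-dense-set argument is needed, whereas your constant-increment choice $w=\lambda\mathbf{1}_{B_{\Gamma}(s,\varepsilon)}$ makes the exceptional set $\lambda$-dependent and forces the dense-family-plus-continuity bookkeeping you describe --- both are valid.
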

\begin{proof}
Let $u\in \HH^{1}_{\mathrm{D}}(\Omega)$ and let us prove the two inclusions. Firstly, let $v\in\HH^{1}(\Omega)$ be a solution to Problem~\eqref{PbannexesousdiffDNT}. Then $v\in\HH^{1}_{\mathrm{D}}(\Omega)$, $\partial_{\nn}v\in\LL^{2}(\Gamma_{\mathrm{N}}\cup\Gamma_{\mathrm{T}})$ and~$\partial_{\nn}v(s)\in$ $g_{0}(s)\partial|\mathord{\cdot} |(u(s))$ for almost all~$s\in\Gamma_{\mathrm{T}}$. Hence one has
$$
\displaystyle \partial_{\nn}v(s)(\varphi(s)-u(s))\leq g_{0}(s)(|\varphi(s)|-|u(s)|),
$$
for all $\varphi\in \HH^{1}_{\mathrm{D}}(\Omega)$ and for almost all $s\in\Gamma_{\mathrm{T}}$. It follows that
$$
\displaystyle\int_{\Gamma_{\mathrm{T}}}\partial_{\nn}v(\varphi-u)\leq\int_{\Gamma_{\mathrm{T}}}g_{0}(|\varphi|-|u|),
$$
for all $\varphi\in\HH^{1}_{\mathrm{D}}(\Omega)$. Moreover $-\Delta v=0$ in $\MD'(\Omega)$, thus it holds $-\Delta v=0$ in $\LL^{2}(\Omega)$. Hence, from Green formula (see Proposition~\ref{Green}), one gets
$$
\displaystyle\int_{\Omega}\nabla v\cdot\nabla(\varphi-u)=\dual{\partial_{\nn}v}{\varphi-u}_{\HH^{-1/2}(\Gamma)\times \HH^{1/2}(\Gamma)},
$$
for all $\varphi\in\HH^{1}_{\mathrm{D}}(\Omega)$. Furthermore, for all $\varphi\in\HH^{1}_{\mathrm{D}}(\Omega)$, $\varphi\in\HH^{1/2}_{00}(\Gamma_{\mathrm{N}}\cup\Gamma_{\mathrm{T}})$ which is a vector subspace of $\HH^{1/2}(\Gamma)$. Therefore one has
$$
\displaystyle\dual{\partial_{\nn}v}{\varphi-u}_{\HH^{-1/2}(\Gamma)\times \HH^{1/2}(\Gamma)}=\dual{ \partial_{\nn}v}{\varphi-u}_{\HH^{-1/2}_{00}(\Gamma_{\mathrm{N}}\cup\Gamma_{\mathrm{T}})\times \HH^{1/2}_{00}(\Gamma_{\mathrm{N}}\cup\Gamma_{\mathrm{T}})},
$$
for all $\varphi\in\HH^{1/2}_{00}(\Gamma_{\mathrm{N}}\cup\Gamma_{\mathrm{T}})$. Since $\partial_{\nn}v\in \mathrm{L}^{2}(\Gamma_{\mathrm{N}}\cup\Gamma_{\mathrm{T}})$ and $\partial_{\nn}v=0$ almost everywhere on $\Gamma_{\mathrm{N}}$, this leads to
$$
\displaystyle
\dual{ \partial_{\nn}v}{\varphi-u}_{\HH^{-1/2}_{00}(\Gamma_{\mathrm{N}}\cup\Gamma_{\mathrm{T}})\times \HH^{1/2}_{00}(\Gamma_{\mathrm{N}}\cup\Gamma_{\mathrm{T}})}=\int_{\Gamma_{\mathrm{T}}}\partial_{\nn}v(\varphi-u),
$$
for all $\varphi\in\HH^{1}_{\mathrm{D}}(\Omega)$. Therefore one deduces
$$
\displaystyle\int_{\Omega}\nabla v\cdot\nabla(\varphi-u)\leq
\int_{\Gamma_{\mathrm{T}}}g_{0}(|\varphi|-|u|),
$$
for all $\varphi\in\HH^{1}_{\mathrm{D}}(\Omega)$, that is
$$
\dual{v}{\varphi-u}_{\HH^{1}_{\mathrm{D}}(\Omega)}\leq \Phi(0,\varphi)-\Phi(0,u),
$$
for all $\varphi\in\HH^{1}_{\mathrm{D}}(\Omega)$. Thus $v\in\partial\Phi(0,\cdot)(u)$ and the first inclusion is proved. Conversely let $v\in\partial\Phi(0,\cdot)(u)$. One has
\begin{equation}\label{inegalitesousdiffDNT}
\displaystyle\int_{\Omega}\nabla v\cdot\nabla(\varphi-u)\leq
\int_{\Gamma_{\mathrm{T}}}g_{0}(|\varphi|-|u|),
\end{equation}
for all $\varphi\in\HH^{1}_{\mathrm{D}}(\Omega)$. Considering the function $\varphi=u\pm\psi\in\HH^{1}_{\mathrm{D}}(\Omega)$ with any function $\psi\in\MD(\Omega)$, one deduces from Inequality~\eqref{inegalitesousdiffDNT} that $-\Delta v=0$ in $\MD'(\Omega)$, thus $-\Delta v=0$ in $\LL^{2}(\Omega)$. Hence, from Green formula and Inequality~\eqref{inegalitesousdiffDNT}, it follows that
$$
\dual{\partial_{\nn}v}{\varphi-u}_{\HH^{-1/2}_{00}(\Gamma_{\mathrm{N}}\cup\Gamma_{\mathrm{T}})\times \HH^{1/2}_{00}(\Gamma_{\mathrm{N}}\cup\Gamma_{\mathrm{T}})}\leq
\int_{\Gamma_{\mathrm{T}}}g_{0}(|\varphi|-|u|),
$$
for all $\varphi\in\HH^{1}_{\mathrm{D}}(\Omega)$, and thus also for all $\varphi\in\HH^{1/2}_{00}(\Gamma_{\mathrm{N}}\cup\Gamma_{\mathrm{T}})$. Now let us consider the func\-tion~$\varphi=u+w\in\HH^{1/2}_{00}(\Gamma_{\mathrm{N}}\cup\Gamma_{\mathrm{T}})$ for any $w\in\HH^{1/2}_{00}(\Gamma_{\mathrm{N}}\cup\Gamma_{\mathrm{T}})$. One gets
$$
\left|\dual{\partial_{\nn}v}{w}_{\HH^{-1/2}_{00}(\Gamma_{\mathrm{N}}\cup\Gamma_{\mathrm{T}})\times \HH^{1/2}_{00}(\Gamma_{\mathrm{N}}\cup\Gamma_{\mathrm{T}})}\right|\leq \int_{\Gamma_{\mathrm{T}}}g_{0}|w|\leq\left \| g_{0} \right \|_{\LL^{2}(\Gamma_{\mathrm{T}})}\left \| w \right \|_{\LL^{2}(\Gamma_{\mathrm{N}}\cup\Gamma_{\mathrm{T}})}.
$$
From Proposition~\ref{Ident}, one deduces that $\partial_{\nn}v\in \mathrm{L}^{2}(\Gamma_{\mathrm{N}}\cup\Gamma_{\mathrm{T}})$ and also that
\begin{equation}\label{inegalitesousdiffDNTsensi}
\displaystyle\int_{\Gamma_{\mathrm{N}}\cup\Gamma_{\mathrm{T}}}w\partial_{\nn}v\leq
\int_{\Gamma_{\mathrm{T}}}g_{0}(|u+w|-|u|), 
\end{equation}
for all $w\in\HH^{1/2}_{00}(\Gamma_{\mathrm{N}}\cup\Gamma_{\mathrm{T}})$,
and thus by density for all $w\in\LL^{2}(\Gamma_{\mathrm{N}}\cup\Gamma_{\mathrm{T}})$.
By considering the function~$w\in\LL^{2}(\Gamma_{\mathrm{N}}\cup\Gamma_{\mathrm{T}})$ defined by
$$w:=
\left\{
\begin{array}{rl}
\pm\psi	 & \text{ on } \Gamma_{\mathrm{N}}, \\
0 	 & \text{ on } \Gamma_{\mathrm{T}},
\end{array}
\right.
$$
where $\psi$ is any function in $\LL^{2}(\Gamma_{\mathrm{N}})$, one gets in Inequality~\eqref{inegalitesousdiffDNTsensi} that
$$
\int_{\Gamma_{\mathrm{N}}}\psi\partial_{\nn}v=0,
$$
for all $\psi\in\LL^{2}(\Gamma_{\mathrm{N}})$. Hence $\partial_{\nn}v=0$ almost everywhere on $\Gamma_{\mathrm{N}}$, and Inequality~\eqref{inegalitesousdiffDNTsensi} becomes
\begin{equation}\label{secondeinegalite}
\displaystyle\int_{\Gamma_{\mathrm{T}}}\partial_{\nn}v(\varphi-u)\leq
\int_{\Gamma_{\mathrm{T}}}g_{0}(|\varphi|-|u|),
\end{equation}
for all $\varphi\in\LL^{2}(\Gamma_{\mathrm{N}}\cup\Gamma_{\mathrm{T}})$. Now let $s\in\Gamma_{\mathrm{T}}$ be a Lebesgue point of $\partial_{\nn}v\in\LL^{2}(\Gamma_{\mathrm{N}}\cup\Gamma_{\mathrm{T}})$,  $u\partial_{\nn}v\in\LL^{1}(\Gamma_{\mathrm{N}}\cup\Gamma_{\mathrm{T}})$, $g_{0}\in\LL^{2}(\Gamma_{\mathrm{T}})$ and of~$g_{0}|u|\in\LL^{1}(\Gamma_{\mathrm{T}})$, such that $s\in\mathrm{int}_{\Gamma}({\Gamma_{\mathrm{T}}})$. Let us consider the function $\varphi\in \mathrm{L}^{2}(\Gamma_{\mathrm{N}}\cup\Gamma_{\mathrm{T}})$ defined by
$$\varphi:=
\left\{
\begin{array}{rl}
x   & \text{ on } B_{\Gamma}(s,\varepsilon) , \\
u   & \text{ on } \Gamma_{\mathrm{N}}\cup\Gamma_{\mathrm{T}}\textbackslash B_{\Gamma}(s,\varepsilon) ,
\end{array}
\right.
$$
with $x\in\R$ and $\varepsilon>0$ such that $B_{\Gamma}(s,\varepsilon)\subset\Gamma_{\mathrm{T}}$. Then one has from Inequality~\eqref{secondeinegalite} that
$$
\displaystyle \frac{1}{\left|B_{\Gamma}(s,\varepsilon)\right|}\int_{B_{\Gamma}(s,\varepsilon)}\partial_{\nn}v(x-u)\leq\frac{1}{\left|B_{\Gamma}(s,\varepsilon)\right|}\int_{B_{\Gamma}(s,\varepsilon)}g_{0}|x|-\frac{1}{\left|B_{\Gamma}(s,\varepsilon)\right|}\int_{B_{\Gamma}(s,\varepsilon)}g_{0}|u|,
$$
thus $\partial_{\nn}v(s)(x-u(s))\leq g_{0}(s)(|x|-|u(s)|)$ by letting $\varepsilon\rightarrow0^{+}$. This inequality is true for any~$x \in \R$, therefore $\partial_{\nn}v(s)\in g_{0}(s)\partial|\mathord{\cdot} |(u(s)).
$
Moreover, since almost every point of $\Gamma_{\mathrm{T}}$ are in~$\mathrm{int}_{\Gamma}({\Gamma_{\mathrm{T}}})$ and are Lesbegue points of $\partial_{\nn}v\in\LL^{2}(\Gamma_{\mathrm{N}}\cup\Gamma_{\mathrm{T}})$, $u\partial_{\nn}v\in\LL^{1}(\Gamma_{\mathrm{N}}\cup\Gamma_{\mathrm{T}})$, $g_{0}\in\LL^{2}(\Gamma_{\mathrm{T}})$ and of $g_{0}|u|\in\LL^{1}(\Gamma_{\mathrm{T}})$, one deduces 
$$
\partial_{\nn}v(s)\in g_{0}(s)\partial|\mathord{\cdot} |(u(s)),
$$
for almost all $s\in\Gamma_{\mathrm{T}}$, and this proves the second inclusion.
\end{proof}

The twice epi-differentiability is defined using the second-order difference quotient functions. Therefore let us compute the following second-order difference quotient functions of $\Phi$ at $u\in\HH^{1}_{\mathrm{D}}(\Omega)$ for $v\in\partial\Phi(0,\cdot)(u)$ defined by
$$
\fonction{\Delta_{t}^{2}\Phi(u|v)}{\HH^1_{\mathrm{D}}(\Omega)}{\R}{w}{ \displaystyle\Delta_{t}^{2}\Phi(u|v)(w):=\frac{\Phi(t,u+t w)-\Phi(t,u)-t\dual{ v}{w}_{\HH^{1}_{\mathrm{D}}(\Omega)}}{t^{2}},}
$$
for all $t>0$.
\begin{myProp}\label{epidiffoffunctionG}
For all $t>0$, $u\in \HH^{1}_{\mathrm{D}}(\Omega)$ and $v\in\partial\Phi(0,\cdot)(u)$, it holds that
\begin{equation}\label{Delta2}
      \displaystyle\Delta_{t}^{2}\Phi(u|v)(w)=\int_{\Gamma_{\mathrm{T}}}\Delta_{t}^{2}G(s)(u(s)|\partial_{\nn}v(s))(w(s))\mathrm{d}s,
\end{equation}
for all $w\in \HH^{1}_{\mathrm{D}}(\Omega)$, where, for almost all $s\in\Gamma_{\mathrm{T}}$, $\Delta_{t}^{2}G(s)(u(s)|\partial_{\nn}v(s))$ stands for the second-order difference quotient function of $G(s)$ at $u(s)\in\R$ for $\partial_{\nn}v(s) \in g_{0}(s)\partial|\mathord{\cdot} |(u(s))$, with~$G(s)$ defined by
$$ 
\fonction{G(s)}{\mathbb{R}^{+}\times\mathbb{R}}{\R}{(t,x)}{G(s)(t,x):=g_{t}(s)|x|.}
$$
\end{myProp}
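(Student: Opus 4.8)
The plan is to unfold the definition of $\Delta_{t}^{2}\Phi(u|v)(w)$, rewrite each term of its numerator as an integral over $\Gamma_{\mathrm{T}}$, and then recognize the resulting integrand as $\Delta_{t}^{2}G(s)(u(s)|\partial_{\nn}v(s))(w(s))$. First I would fix $t>0$, $u\in\HH^{1}_{\mathrm{D}}(\Omega)$, $v\in\partial\Phi(0,\cdot)(u)$ and $w\in\HH^{1}_{\mathrm{D}}(\Omega)$. By the definition~\eqref{fonctionnelledeTrescaparacas2} of $\Phi$, the two terms $\Phi(t,u+tw)=\int_{\Gamma_{\mathrm{T}}}g_{t}|u+tw|$ and $\Phi(t,u)=\int_{\Gamma_{\mathrm{T}}}g_{t}|u|$ are already expressed as integrals over $\Gamma_{\mathrm{T}}$. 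Hence the only nontrivial step is to rewrite the duality term $t\dual{v}{w}_{\HH^{1}_{\mathrm{D}}(\Omega)}$ as an integral over $\Gamma_{\mathrm{T}}$ as well.

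The key ingredient is Lemma~\ref{lemmeAnnexeSousdiff}: since $v\in\partial\Phi(0,\cdot)(u)$, the function $v$ solves the auxiliary problem~\eqref{PbannexesousdiffDNT}, so that $-\Delta v=0$ in $\LL^{2}(\Omega)$ and $\partial_{\nn}v\in\LL^{2}(\Gamma_{\mathrm{N}}\cup\Gamma_{\mathrm{T}})$ with $\partial_{\nn}v=0$ almost everywhere on $\Gamma_{\mathrm{N}}$. Recalling that $\dual{v}{w}_{\HH^{1}_{\mathrm{D}}(\Omega)}=\int_{\Omega}\nabla v\cdot\nabla w$ and that $\Delta v=0\in\LL^{2}(\Omega)$, the Green formula (Proposition~\ref{Green}) gives $\dual{v}{w}_{\HH^{1}_{\mathrm{D}}(\Omega)}=\dual{\partial_{\nn}v}{w}_{\HH^{-1/2}(\Gamma)\times\HH^{1/2}(\Gamma)}$. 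Since $w\in\HH^{1}_{\mathrm{D}}(\Omega)$ belongs to $\HH^{1/2}_{00}(\Gamma_{\mathrm{N}}\cup\Gamma_{\mathrm{T}})$ and $\partial_{\nn}v\in\LL^{2}(\Gamma_{\mathrm{N}}\cup\Gamma_{\mathrm{T}})$, Proposition~\ref{Ident} identifies this pairing with the $\LL^{2}$-integral $\int_{\Gamma_{\mathrm{N}}\cup\Gamma_{\mathrm{T}}}\partial_{\nn}v\,w$, which reduces to $\int_{\Gamma_{\mathrm{T}}}\partial_{\nn}v\,w$ because $\partial_{\nn}v=0$ on $\Gamma_{\mathrm{N}}$. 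This is exactly the manipulation already carried out in the first inclusion of the proof of Lemma~\ref{lemmeAnnexeSousdiff}, so it may simply be invoked.

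Putting the three terms together, the numerator of $\Delta_{t}^{2}\Phi(u|v)(w)$ becomes $\int_{\Gamma_{\mathrm{T}}}\left(g_{t}|u+tw|-g_{t}|u|-t\,\partial_{\nn}v\,w\right)$, and dividing by $t^{2}$ and passing the factor $1/t^{2}$ inside the integral yields
$$
\Delta_{t}^{2}\Phi(u|v)(w)=\int_{\Gamma_{\mathrm{T}}}\frac{g_{t}(s)|u(s)+tw(s)|-g_{t}(s)|u(s)|-t\,\partial_{\nn}v(s)\,w(s)}{t^{2}}\,\mathrm{d}s.
$$
By the definition $G(s)(t,x)=g_{t}(s)|x|$ and the definition of its second-order difference quotient function at $u(s)$ for $\partial_{\nn}v(s)\in g_{0}(s)\partial|\mathord{\cdot}|(u(s))$, the integrand is precisely $\Delta_{t}^{2}G(s)(u(s)|\partial_{\nn}v(s))(w(s))$, which is the claimed identity~\eqref{Delta2}.

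The argument is essentially bookkeeping, and I expect the only genuine step to be the conversion of the Hilbertian inner product $\dual{v}{w}_{\HH^{1}_{\mathrm{D}}(\Omega)}$ into a boundary integral over $\Gamma_{\mathrm{T}}$; this rests entirely on the characterization of $\partial\Phi(0,\cdot)(u)$ provided by Lemma~\ref{lemmeAnnexeSousdiff}, in particular on the regularity $\partial_{\nn}v\in\LL^{2}(\Gamma_{\mathrm{N}}\cup\Gamma_{\mathrm{T}})$ and the vanishing of $\partial_{\nn}v$ on $\Gamma_{\mathrm{N}}$. No further difficulty is anticipated; the measurability in $s$ of the integrand, needed to make sense of the right-hand side of~\eqref{Delta2}, follows from the fact that $g_{t}$, $u$, $w$ and $\partial_{\nn}v$ are all $\LL^{2}(\Gamma_{\mathrm{T}})$ functions.
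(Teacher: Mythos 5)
Your proposal is correct and follows essentially the same route as the paper: invoke Lemma~\ref{lemmeAnnexeSousdiff} to identify $v$ with a solution of the auxiliary problem, use the Green formula and Proposition~\ref{Ident} to rewrite $\dual{v}{w}_{\HH^{1}_{\mathrm{D}}(\Omega)}$ as $\int_{\Gamma_{\mathrm{T}}}\partial_{\nn}v\,w$, and then recognize the integrand as $\Delta_{t}^{2}G(s)(u(s)|\partial_{\nn}v(s))(w(s))$. No gaps.
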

\begin{myRem}
Note that, for almost all $s\in\Gamma_{\mathrm{T}}$ and all $t\geq 0$, $G(s)(t,\cdot):=g_{t}(s)|\mathord{\cdot} |$ is a proper lower semi-continuous convex function on $\mathbb{R}$. Moreover, since $g_{0}>0$ almost everywhere on~$\Gamma_{\mathrm{T}}$, it follows that
$$
\partial\left[G(s)(0,\mathord{\cdot})\right](x)=g_{0}(s)\partial |\mathord{\cdot} |(x),
$$
for all $x\in\R$ and for almost all $s\in\Gamma_{\mathrm{T}}$.
\end{myRem}
\begin{proof}[Proof of Proposition~\ref{epidiffoffunctionG}]
Let $t>0$, $u\in \HH^{1}_{\mathrm{D}}(\Omega)$ and $v\in\partial\Phi(0,\cdot)(u)$. From Lemma~\ref{lemmeAnnexeSousdiff} and Green formula (see Proposition~\ref{Green}), one deduces
$$
\displaystyle \dual{ v}{w}_{\HH^{1}_{\mathrm{D}}(\Omega)}=\dual{ \partial_{\nn}v}{w}_{\HH^{-1/2}(\Gamma)\times \HH^{1/2}(\Gamma)},
$$
for all $w\in\HH^{1}_{\mathrm{D}}(\Omega)$. Moreover, similarly to Lemma~\ref{lemmeAnnexeSousdiff}, one gets
$$
\displaystyle \dual{ v}{w}_{\HH^{1}_{\mathrm{D}}(\Omega)}=\int_{\Gamma_{\mathrm{T}}}w\partial_{\nn}v,
$$
for all $w\in\HH^{1}_{\mathrm{D}}(\Omega)$. Thus it follows that
$$
        \displaystyle \Delta_{t}^{2}\Phi(u|v)(w)=\int_{\Gamma_{\mathrm{T}}}\frac{g_{t}(s)|u(s)+t w(s)|-g_{t}(s)|u(s)|-tw(s)\partial_{\nn}v(s)}{t^{2}} \mathrm{d}s,
$$
for all $w\in\HH^{1}_{\mathrm{D}}(\Omega)$. Furthermore, since $\partial_{\nn}v(s)\in g_{0}(s)\partial |\mathord{\cdot} |(u(s))$ for almost all $s\in\Gamma_{\mathrm{T}}$, one deduces that
$$
\displaystyle \Delta_{t}^{2}\Phi(u|v)(w)=\int_{\Gamma_{\mathrm{T}}}\Delta_{t}^{2}G(s)(u(s)|\partial_{\nn}v(s))(w(s))\mathrm{d}s,
$$
for all $w\in \HH^{1}_{\mathrm{D}}(\Omega)$, which concludes the proof.
\end{proof}

From the above proposition we note that the twice epi-differentiability of the parameterized Tresca friction functional is strongly related to the twice epi-differentiability of the function $G(s)$ for almost all~$s\in\Gamma_{\mathrm{T}}$. Therefore the computation of the second-order epi-derivative of $G(s)$ for almost all~$s\in\Gamma_{\mathrm{T}}$ is the next step.
\begin{myProp}\label{épidiffgabs}
Assume that, for almost all $s\in\Gamma_{\mathrm{T}}$, the map $t\in\mathbb{R}^{+}\mapsto g_{t}(s)\in\mathbb{R}^{+}$ is differentiable at $t=0$, with its derivative denoted by $g'_{0}(s)$. Then, for almost all $s\in\Gamma_{\mathrm{T}}$, the map~$G(s)$ is twice epi-differentiable at any~$x\in\mathbb{R}$ and for all $y\in g_{0}(s)\partial |\mathord{\cdot} |(x)$ with
$$
\displaystyle \mathrm{D}_{e}^{2}G(s)(x|y)(z)=\mathrm{I}_{\mathrm{K}_{ x,\frac{y}{g_{0}(s)}}}(z)+g'_{0}(s)\frac{y}{g_{0}(s)}z,
$$
for all $z\in\mathbb{R}$, where $\mathrm{I}_{\mathrm{K}_{ x,\frac{y}{g_{0}(s)}}}$ stands for the indicator function of the set $\mathrm{K}_{ x,\frac{y}{g_{0}(s)}}$ (see Example~\ref{epidiffabs}).
\end{myProp}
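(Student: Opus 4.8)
The plan is to fix a point $s\in\Gamma_{\mathrm{T}}$ at which $t\mapsto g_{t}(s)$ is differentiable and $g_{0}(s)>0$ (which holds for almost every $s$ by assumption), to drop the argument $s$ by writing $g_{t}:=g_{t}(s)$, $g_{0}':=g_{0}'(s)$, and to set $w:=\frac{y}{g_{0}}$, so that $y\in g_{0}\partial|\mathord{\cdot}|(x)$ gives $w\in\partial|\mathord{\cdot}|(x)$. The whole proof rests on the elementary algebraic identity
\[
\Delta_{t}^{2}G(s)(x|y)(z)=g_{t}\,\delta_{t}^{2}|\mathord{\cdot}|(x|w)(z)+\frac{g_{t}-g_{0}}{t}\,w\,z,
\]
valid for all $t>0$ and $z\in\mathbb{R}$, where $\delta_{t}^{2}|\mathord{\cdot}|(x|w)$ is the standard second-order difference quotient of the absolute value (as in Example~\ref{epidiffabs}); it follows by adding and subtracting $tg_{t}wz$ in the numerator and using $g_{0}w=y$. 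This identity isolates the $t$-dependence of $G(s)$ into the scalar factors $g_{t}$ and $\frac{g_{t}-g_{0}}{t}$, reducing the matter to the already-known $t$-independent computation for $|\mathord{\cdot}|$.

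Next I would collect three limiting ingredients. First, differentiability of $t\mapsto g_{t}$ at $0$ yields $g_{t}\to g_{0}$ and $\frac{g_{t}-g_{0}}{t}\to g_{0}'$ as $t\to0^{+}$, so the affine term above converges, for any $z_{t}\to z$, to $g_{0}'wz$. Second, Example~\ref{epidiffabs} guarantees that $\delta_{t}^{2}|\mathord{\cdot}|(x|w)$ Mosco epi-converges to $\mathrm{I}_{\mathrm{K}_{x,w}}$. Third, the subgradient inequality for $w\in\partial|\mathord{\cdot}|(x)$ gives $|x+tz|\geq|x|+twz$, hence $\delta_{t}^{2}|\mathord{\cdot}|(x|w)\geq0$ pointwise, a nonnegativity used to control its product with the bounded factor $g_{t}$. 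Since the underlying space is $\mathbb{R}$, weak and strong convergence coincide and Mosco epi-convergence reduces to ordinary epi-convergence, so it suffices to verify the two conditions of Proposition~\ref{caractMosco} for the candidate limit $\psi(z):=\mathrm{I}_{\mathrm{K}_{x,w}}(z)+g_{0}'wz$.

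Then I would check those two conditions. For the recovery condition and $z\in\mathrm{K}_{x,w}$, I take the recovery sequence $z_{t}\to z$ provided by $\delta_{t}^{2}|\mathord{\cdot}|(x|w)\to\mathrm{I}_{\mathrm{K}_{x,w}}$; since $0\leq\delta_{t}^{2}|\mathord{\cdot}|(x|w)(z_{t})$ and $\limsup\delta_{t}^{2}|\mathord{\cdot}|(x|w)(z_{t})\leq0$, one gets $\delta_{t}^{2}|\mathord{\cdot}|(x|w)(z_{t})\to0$, hence $g_{t}\,\delta_{t}^{2}|\mathord{\cdot}|(x|w)(z_{t})\to0$, and the identity yields $\Delta_{t}^{2}G(s)(x|y)(z_{t})\to\psi(z)$ (while for $z\notin\mathrm{K}_{x,w}$ one has $\psi(z)=+\infty$ and may simply take $z_{t}\equiv z$). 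For the liminf condition and any $z_{t}\to z$, the affine term converges to $g_{0}'wz$, while $g_{t}\,\delta_{t}^{2}|\mathord{\cdot}|(x|w)(z_{t})\geq0$ and $\liminf\delta_{t}^{2}|\mathord{\cdot}|(x|w)(z_{t})\geq\mathrm{I}_{\mathrm{K}_{x,w}}(z)$: when $z\in\mathrm{K}_{x,w}$ this gives $\liminf\Delta_{t}^{2}G(s)(x|y)(z_{t})\geq g_{0}'wz=\psi(z)$, and when $z\notin\mathrm{K}_{x,w}$ the factor $\delta_{t}^{2}|\mathord{\cdot}|(x|w)(z_{t})\to+\infty$ together with $g_{t}\to g_{0}>0$ forces $\Delta_{t}^{2}G(s)(x|y)(z_{t})\to+\infty=\psi(z)$. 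Substituting $w=\frac{y}{g_{0}}$ into $\psi$ recovers the announced formula.

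The only genuinely delicate point is the product $g_{t}\,\delta_{t}^{2}|\mathord{\cdot}|(x|w)$: epi-convergence is not preserved under multiplication by a varying scalar in general, so the argument must exploit both the nonnegativity of $\delta_{t}^{2}|\mathord{\cdot}|$ and the strict positivity of the limit $g_{t}\to g_{0}>0$, which together ensure that the value $+\infty$ of the indicator is preserved and the finite value $0$ is neither inflated nor created. Everything else—the convergence of the affine remainder and the reduction through the identity—is routine. I would finally observe that the argument is pointwise in $s$ and valid for almost every $s\in\Gamma_{\mathrm{T}}$, which is precisely what the statement asserts.
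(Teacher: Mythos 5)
Your proposal is correct and follows essentially the same route as the paper: the same algebraic decomposition $\Delta_{t}^{2}G(s)(x|y)(z)=g_{t}(s)\,\delta_{t}^{2}|\mathord{\cdot}|\left(x\middle|\tfrac{y}{g_{0}(s)}\right)(z)+\tfrac{(g_{t}(s)-g_{0}(s))y}{t\,g_{0}(s)}z$, followed by the characterization of Mosco epi-convergence and Example~\ref{epidiffabs}. The paper leaves the verification of the two conditions of Proposition~\ref{caractMosco} implicit, whereas you spell out the only delicate point (handling the product $g_{t}\,\delta_{t}^{2}|\mathord{\cdot}|$ via nonnegativity of the difference quotient and positivity of $g_{0}(s)$), which is a welcome but not substantively different elaboration.
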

\begin{proof}
We use the same notations as in Definitions~\ref{epidiff} and~\ref{epidiffpara}.
Let $x\in\mathbb{R}$. Then, for almost all~$s\in\Gamma_{\mathrm{T}}$, for all $y\in g_{0}(s)\partial|\mathord{\cdot} |(x)$ and all $z\in\mathbb{R}$, one has
\begin{multline*}
\displaystyle\Delta_{t}^{2}G(s)(x|y)(z)=\frac{g_{t}(s)|x+t z|-g_{t}(s)|x|-tyz}{t^{2}}\\=g_{t}(s)\frac{|x+t z|-|x|-t \frac{y}{g_{0}(s)}z}{t^{2}}+\frac{\left(g_{t}(s)-g_{0}(s)\right)y}{tg_{0}(s)}z,
\end{multline*}
that is
$$
\displaystyle \Delta_{t}^{2}G(s)(x|y)(z)=g_{t}(s)\delta_{t}^{2}|\mathord{\cdot} | \left( x|\frac{y}{g_{0}(s)} \right)(z)+\frac{\left(g_{t}(s)-g_{0}(s)\right)y}{tg_{0}(s)}z,
$$ 
with $\frac{y}{g_{0}(s)}\in\partial|\mathord{\cdot} |(x)$, and where $\delta_{t}^{2}|\mathord{\cdot} |(x|\frac{y}{g_{0}(s)})$ is the second-order difference quotient function of~$|\mathord{\cdot}|$ at $x$ for $\frac{y}{g_{0}(s)}$ (see Definition~\ref{epidiff} since $|\mathord{\cdot}|$ is $t$-independent function). Using the characterization of Mosco epi-convergence (see Proposition~\ref{caractMosco}) and Example~\ref{epidiffabs}, it follows that the map~$G(s)$ is twice epi-differentiable at~$x$ for $y$ with
$$
\displaystyle \mathrm{D}_{e}^{2}G(s)(x|y)(z)=\mathrm{I}_{\mathrm{K}_{ x,\frac{y}{g_{0}(s)}}}(z)+g'_{0}(s)\frac{y}{g_{0}(s)}z,
$$
for all $z\in\mathbb{R}$, and the proof is complete.
\end{proof}
\begin{myRem}
The perturbation of the friction term~$g_t$ in the parameterized Tresca friction problem~\eqref{PbNeumannDirichletTrescaPara} (which is not considered in the previous paper~\cite{4ABC}) generates an additional term in the expression of the second-order epi-derivative of~$G(s)$, for almost all $s\in\Gamma_{\mathrm{T}}$, at all $x \in \R$ for all~$y\in g_{0}(s)\partial |\mathord{\cdot} |(x)$, given by the function~$z\in\R\mapsto g'_{0}(s)\frac{y}{g_{0}(s)}z\in\R$.
\end{myRem}
\subsubsection{The Derivative of the Solution to the Parameterized Tresca Friction Problem}
From the previous results and some additional assumptions detailed below, we are now in a position to state and prove the main result of this paper that characterizes the derivative of the solution to the parameterized Tresca friction problem~\eqref{PbNeumannDirichletTrescaPara}.
\begin{myTheorem}\label{caractu0derivDNT}
Let $u_{t}\in\HH^{1}_{\mathrm{D}}(\Omega)$ be the unique solution to the parameterized Tresca friction problem~\eqref{PbNeumannDirichletTrescaPara} for all~$t \geq 0$. Assume that
\begin{enumerate}
    \item the map $t\in\mathbb{R}^{+}\mapsto f_{t}\in \mathrm{L}^{2}(\Omega)$ is differentiable at $t=0$, with its derivative denoted by~$f'_{0}\in\LL^{2}(\Omega)$;\label{hypo1}
    \item the map $t\in\mathbb{R}^{+}\mapsto k_{t}\in \LL^{2}(\Gamma_{\mathrm{N}})$ is differentiable at $t=0$, with its derivative denoted by~$k'_{0}\in\LL^{2}(\Gamma_{\mathrm{N}})$;\label{hypo2}
    \item for almost all $s\in\Gamma_{\mathrm{T}}$, the map $t\in\mathbb{R}^{+}\mapsto g_{t}(s)\in\mathbb{R}^{+}$ is differentiable at $t=0$, with its derivative denoted by $g'_{0}(s)$, and also $g_{0}'\in \mathrm{L}^{2}(\Gamma_{\mathrm{T}})$;\label{hypo3}
    \item the parameterized Tresca friction functional $\Phi$ defined in~\eqref{fonctionnelledeTrescaparacas2} is twice epi-differentiable (see Definition~\ref{epidiffpara}) at $u_{0}$ for $F_{0}-u_{0}\in\partial \Phi(0,\cdot)(u_{0})$, with\label{hypo4}
\begin{equation}\label{hypoth1}
\displaystyle\mathrm{D}_{e}^{2}\Phi(u_{0}|F_{0}-u_{0})(w)=\int_{\Gamma_{\mathrm{T}}}\mathrm{D}_{e}^{2}G(s)(u_{0}(s)|\partial_{\nn}(F_{0}-u_{0})(s))(w(s))\mathrm{d}s,
\end{equation}    
for all $w\in \HH^{1}_{\mathrm{D}}(\Omega)$, where $F_{0}$ is the unique solution to the parameterized Dirichlet-Neumann problem~\eqref{PbNeumannDirichletPara} for the parameter~$t=0$.
\end{enumerate}
Then the map $t\in\mathbb{R}^{+}\mapsto u_{t}\in\HH^{1}_{\mathrm{D}}(\Omega)$ is differentiable at $t=0$, and its derivative denoted by~$u'_{0}\in\HH^{1}_{\mathrm{D}}(\Omega)$ is the unique weak solution to the Signorini problem
\begin{equation}\tag{SP$_{0}'$}\label{caractu0DNT}
\arraycolsep=2pt
\left\{
\begin{array}{rcll}
-\Delta u_{0}' & = & f_{0}'  & \text{ in } \Omega , \\
u_{0}' & = & 0  & \text{ on } \Gamma_{\mathrm{D}}\cup\Gamma^{u_{0},g_{0}}_{\mathrm{T}_{\mathrm{S}_{\mathrm{D}}}}, \\
\partial_{\nn} u_{0}' & = & k_{0}'  & \text{ on } \Gamma_{\mathrm{N}}, \\
\partial_{\nn} u_{0}' & = & g_{0}'\frac{\partial_{\nn}u_{0}}{g_{0}}  & \text{ on } \Gamma^{u_{0},g_{0}}_{\mathrm{T}_{\mathrm{S}_{\mathrm{N}}}}, \\
 u_{0}'\leq0\text{, } \partial_{\nn}u_{0}'\leq g_{0}'\frac{\partial_{\nn}u_{0}}{g_{0}} \text{ and } u_{0}'\left(\partial_{\nn}u_{0}'- g_{0}'\frac{\partial_{\nn}u_{0}}{g_{0}}\right) & = & 0  & \text{ on } \Gamma^{u_{0},g_{0}}_{\mathrm{T}_{\mathrm{S-}}}, \\
 u_{0}'\geq0\text{, } \partial_{\nn}u_{0}'\geq g_{0}'\frac{\partial_{\nn}u_{0}}{g_{0}} \text{ and } u_{0}'\left(\partial_{\nn}u_{0}'- g_{0}'\frac{\partial_{\nn}u_{0}}{g_{0}}\right) & = & 0  & \text{ on } \Gamma^{u_{0},g_{0}}_{\mathrm{T}_{\mathrm{S+}}},
\end{array}
\right.
\end{equation}
where $\Gamma_{\mathrm{T}}=\Gamma^{u_{0},g_{0}}_{\mathrm{T}_{\mathrm{S}_{\mathrm{N}}}}\cup
\Gamma^{u_{0},g_{0}}_{\mathrm{T}_{\mathrm{S}_{\mathrm{D}}}}\cup\Gamma^{u_{0},g_{0}}_{\mathrm{T}_{\mathrm{S-}}}\cup\Gamma^{u_{0},g_{0}}_{\mathrm{T}_{\mathrm{S+}}}$ with
$$
\begin{array}{l}
\Gamma^{u_{0},g_{0}}_{\mathrm{T}_{\mathrm{S}_{\mathrm{N}}}}:=\left\{s\in\Gamma_{\mathrm{T}} \mid  u_{0}(s)\neq0\right \}, \\
\Gamma^{u_{0},g_{0}}_{\mathrm{T}_{\mathrm{S}_{\mathrm{D}}}}:=\left\{s\in\Gamma_{\mathrm{T}} \mid  u_{0}(s)=0 \text{ and } \partial_{\nn}u_{0}(s)\in\left(-g_{0}(s),g_{0}(s)\right)\right \}, \\
\Gamma^{u_{0},g_{0}}_{\mathrm{T}_{\mathrm{S-}}}:=\left\{s\in\Gamma_{\mathrm{T}} \mid u_{0}(s)=0 \text{ and } \partial_{\nn}u_{0}(s)=g_{0}(s)\right \}, \\
\Gamma^{u_{0},g_{0}}_{\mathrm{T}_{\mathrm{S+}}}:=\left\{s\in\Gamma_{\mathrm{T}} \mid  u_{0}(s)=0 \text{ and } \partial_{\nn}u_{0}(s)=-g_{0}(s)\right \}.
\end{array}
$$
\end{myTheorem}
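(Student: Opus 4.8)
The plan is to apply Theorem~\ref{TheoABC2018} to the map $t\in\R^{+}\mapsto u_{t}=\mathrm{prox}_{\Phi(t,\cdot)}(F_{t})\in\HH^{1}_{\mathrm{D}}(\Omega)$ (recall~\eqref{uproxGcas2}) and then to recognize the resulting parameterized proximal operator as the weak solution to the Signorini problem~\eqref{caractu0DNT}. I would first verify the three hypotheses of Theorem~\ref{TheoABC2018}. Assumptions~\ref{hypo1} and~\ref{hypo2}, together with Proposition~\ref{AnalysesensiDN}, ensure that $t\mapsto F_{t}$ is differentiable at $t=0$ with derivative $F_{0}'$ solving~\eqref{PbNeumannDirichletDerivhomogene}; this is the first hypothesis. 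Since~\eqref{uproxGcas2} at $t=0$ reads $u_{0}=\mathrm{prox}_{\Phi(0,\cdot)}(F_{0})$, the very definition of the proximal operator gives $F_{0}-u_{0}\in\partial\Phi(0,\cdot)(u_{0})$, so that Assumption~\ref{hypo4} is precisely the second hypothesis. It then remains to establish the third hypothesis while computing $\mathrm{D}_{e}^{2}\Phi(u_{0}|F_{0}-u_{0})$ explicitly.

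The crucial step is to identify the subgradient $F_{0}-u_{0}$ at the trace level. As $F_{0}-u_{0}\in\partial\Phi(0,\cdot)(u_{0})$, Lemma~\ref{lemmeAnnexeSousdiff} yields $\partial_{\nn}(F_{0}-u_{0})\in\LL^{2}(\Gamma_{\mathrm{N}}\cup\Gamma_{\mathrm{T}})$ with $\partial_{\nn}(F_{0}-u_{0})(s)\in g_{0}(s)\partial|\cdot|(u_{0}(s))$ for almost all $s\in\Gamma_{\mathrm{T}}$; since $\partial_{\nn}F_{0}=0$ on $\Gamma_{\mathrm{T}}$ by~\eqref{PbNeumannDirichletPara} at $t=0$, this gives $\partial_{\nn}(F_{0}-u_{0})=-\partial_{\nn}u_{0}$ on $\Gamma_{\mathrm{T}}$ and, in particular, $-\partial_{\nn}u_{0}(s)/g_{0}(s)\in\partial|\cdot|(u_{0}(s))\subset[-1,1]$. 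Inserting $x=u_{0}(s)$ and $y=-\partial_{\nn}u_{0}(s)$ into Proposition~\ref{épidiffgabs} and into formula~\eqref{hypoth1}, and reading off $\mathrm{K}_{x,y/g_{0}(s)}$ from Example~\ref{epidiffabs}, I obtain
$$
\mathrm{D}_{e}^{2}\Phi(u_{0}|F_{0}-u_{0})(w)=\mathrm{I}_{\mathcal{K}^{1}(\Omega)}(w)-\int_{\Gamma_{\mathrm{T}}}g_{0}'\frac{\partial_{\nn}u_{0}}{g_{0}}w,\qquad\forall w\in\HH^{1}_{\mathrm{D}}(\Omega),
$$
where $\mathcal{K}^{1}(\Omega)$ is the nonempty closed convex subset of $\HH^{1}_{\mathrm{D}}(\Omega)$ associated, as in Definition~\ref{WeaksolutionSigno}, with the subdivision of $\Gamma_{\mathrm{T}}$ in the statement. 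The four pieces arise from the value of $\mathrm{K}_{u_{0}(s),-\partial_{\nn}u_{0}(s)/g_{0}(s)}$: it equals $\R$ when $u_{0}(s)\neq0$ (no constraint, giving $\Gamma^{u_{0},g_{0}}_{\mathrm{T}_{\mathrm{S}_{\mathrm{N}}}}$), $\{0\}$ when $u_{0}(s)=0$ and $\partial_{\nn}u_{0}(s)\in(-g_{0}(s),g_{0}(s))$ (giving $w=0$ on $\Gamma^{u_{0},g_{0}}_{\mathrm{T}_{\mathrm{S}_{\mathrm{D}}}}$), $\R^{-}$ when $u_{0}(s)=0$ and $\partial_{\nn}u_{0}(s)=g_{0}(s)$ (giving $w\leq0$ on $\Gamma^{u_{0},g_{0}}_{\mathrm{T}_{\mathrm{S-}}}$), and $\R^{+}$ when $u_{0}(s)=0$ and $\partial_{\nn}u_{0}(s)=-g_{0}(s)$ (giving $w\geq0$ on $\Gamma^{u_{0},g_{0}}_{\mathrm{T}_{\mathrm{S+}}}$). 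As $|\partial_{\nn}u_{0}|\leq g_{0}$ almost everywhere on $\Gamma_{\mathrm{T}}$, the linear term is continuous on $\HH^{1}_{\mathrm{D}}(\Omega)$ (since $g_{0}'\partial_{\nn}u_{0}/g_{0}\in\LL^{2}(\Gamma_{\mathrm{T}})$ by Assumption~\ref{hypo3}), whence $\mathrm{D}_{e}^{2}\Phi(u_{0}|F_{0}-u_{0})$ is proper, lower semi-continuous and convex, which is the third hypothesis.

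Theorem~\ref{TheoABC2018} then yields the differentiability of $t\mapsto u_{t}$ at $t=0$ with $u_{0}'=\mathrm{prox}_{\mathrm{D}_{e}^{2}\Phi(u_{0}|F_{0}-u_{0})}(F_{0}')$. To conclude I would unfold this proximal operator: $u_{0}'$ minimizes $w\mapsto-\int_{\Gamma_{\mathrm{T}}}g_{0}'\frac{\partial_{\nn}u_{0}}{g_{0}}w+\frac{1}{2}\|w-F_{0}'\|^{2}_{\HH^{1}_{\mathrm{D}}(\Omega)}$ over $\mathcal{K}^{1}(\Omega)$, so its variational inequality reads $\dual{u_{0}'-F_{0}'}{w-u_{0}'}_{\HH^{1}_{\mathrm{D}}(\Omega)}\geq\int_{\Gamma_{\mathrm{T}}}g_{0}'\frac{\partial_{\nn}u_{0}}{g_{0}}(w-u_{0}')$ for all $w\in\mathcal{K}^{1}(\Omega)$. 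Replacing $\dual{F_{0}'}{w-u_{0}'}_{\HH^{1}_{\mathrm{D}}(\Omega)}$ through the weak formulation of~\eqref{PbNeumannDirichletDerivhomogene} (which carries no $\Gamma_{\mathrm{T}}$ contribution since $\partial_{\nn}F_{0}'=0$ there) transforms this inequality into exactly the weak formulation (Definition~\ref{WeaksolutionSigno}) of the Signorini problem~\eqref{caractu0DNT} with data $f_{0}'$, $k_{0}'$ and $h=g_{0}'\partial_{\nn}u_{0}/g_{0}$. Uniqueness of this weak solution is given by Proposition~\ref{existenceunicitepbDNS}, which identifies $u_{0}'$ as claimed.

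The main obstacle is the identification carried out in the second step: establishing $\partial_{\nn}(F_{0}-u_{0})=-\partial_{\nn}u_{0}$ on $\Gamma_{\mathrm{T}}$ via Lemma~\ref{lemmeAnnexeSousdiff} and~\eqref{PbNeumannDirichletPara}, and then performing the case analysis that splits $\Gamma_{\mathrm{T}}$ into its four pieces with the correct signs. The delicate point is to check that the additional linear term produced by Proposition~\ref{épidiffgabs} reassembles exactly into the Signorini data $h=g_{0}'\partial_{\nn}u_{0}/g_{0}$; once this is in place, the remaining steps follow routinely from the preliminary results of Section~\ref{section5}.
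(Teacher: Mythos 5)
Your proposal is correct and follows essentially the same route as the paper's proof: verify the three hypotheses of Theorem~\ref{TheoABC2018} (using Proposition~\ref{AnalysesensiDN}, Assumption~\ref{hypo4} with Proposition~\ref{épidiffgabs}, and the bound $|\partial_{\nn}u_{0}|\leq g_{0}$ to get properness and lower semi-continuity), then unfold $u_{0}'=\mathrm{prox}_{\mathrm{D}_{e}^{2}\Phi(u_{0}|F_{0}-u_{0})}(F_{0}')$ into the variational inequality and identify it with the weak formulation of~\eqref{caractu0DNT}. The only cosmetic difference is that you substitute $\partial_{\nn}(F_{0}-u_{0})=-\partial_{\nn}u_{0}$ at the outset while the paper carries $\partial_{\nn}(F_{0}-u_{0})$ until the final step; the case analysis of $\mathrm{K}_{u_{0}(s),-\partial_{\nn}u_{0}(s)/g_{0}(s)}$ and the resulting signs match the statement.
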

\begin{myRem}\label{Remarquenotwice}
   From Proposition~\ref{epidiffoffunctionG}, one can naturally expect that the second-order epi-derivative of the parameterized Tresca friction functional $\Phi$ at $u_{0}$ for $F_{0}-u_{0}$ is given by Equality~\eqref{hypoth1}, which corresponds to the inversion of symbols $\mathrm{ME}\text{-}\mathrm{lim}$ and $\int_{\Gamma_{\mathrm{T}}}$  in Equality~\eqref{Delta2}. Nevertheless, to the best of our knowledge, this inversion is an open question. Therefore we do not know, in general, if the parameterized Tresca friction functional is indeed twice epi-differentiable at $u_{0}$ for $F_{0}-u_{0}$. Nevertheless, in Appendix~\ref{casparticulier}, we prove it in several particular cases corresponding to practical situations.
\end{myRem}
\begin{myRem}\label{Remarquepbbienpose}
 The problem~\eqref{caractu0DNT} in Theorem~\ref{caractu0derivDNT} is a well-posed problem since
 $$
 \left|\frac{\partial_{\nn}u_{0}(s)}{g_{0}(s)}\right|\leq1,
 $$
 for almost all $s\in\Gamma_{\mathrm{T}}$, and hence $g_{0}' \frac{\partial_{\nn}u_{0}}{g_{0}}\in \mathrm{L}^{2}(\Gamma_{\mathrm{T}})$ since $g'_{0}\in \LL^{2}(\Gamma_{\mathrm{T}})$.
\end{myRem} 
\begin{myRem}
Consider the framework of Theorem~\ref{caractu0derivDNT}. Note that $u'_{0}$ is the unique weak solution to the Signorini problem~\eqref{caractu0DNT}, but is not necessarily a strong solution. Nevertheless, in the case where $\partial_{\nn}u'_{0}\in \mathrm{L}^{2}(\Gamma_{\mathrm{N}}\cup\Gamma_{\mathrm{T}})$ and the decomposition~$\Gamma=\Gamma_{\mathrm{D}}\cup\Gamma_{\mathrm{N}}\cup\Gamma^{u_{0},g_{0}}_{\mathrm{T}_{\mathrm{S}_{\mathrm{N}}}}\cup
\Gamma^{u_{0},g_{0}}_{\mathrm{T}_{\mathrm{S}_{\mathrm{D}}}}\cup\Gamma^{u_{0},g_{0}}_{\mathrm{T}_{\mathrm{S-}}}\cup\Gamma^{u_{0},g_{0}}_{\mathrm{T}_{\mathrm{S+}}}$ is consistent (see Definition~\ref{regulieresens2}), then~$u'_{0}$ is a strong solution to the Signorini problem~\eqref{caractu0DNT}.
\end{myRem}
\begin{proof}[Proof of Theorem~\ref{caractu0derivDNT}]
From Hypothesis~\ref{hypo4} and Proposition~\ref{épidiffgabs}, it follows that
\begin{equation}\label{otherepi}
\displaystyle \mathrm{D}_{e}^{2}\Phi(u_{0}|F_{0}-u_{0})(w)=\mathrm{I}_{\mathcal{K}_{u_{0},\frac{\partial_{\nn}(F_{0}-u_{0})}{g_{0}}}}(w)+\int_{\Gamma_{\mathrm{T}}}g'_{0}(s)\frac{\partial_{\nn}(F_{0}-u_{0})(s)}{g_{0}(s)}w(s)\mathrm{d}s,
\end{equation}
for all $w\in \HH^{1}_{\mathrm{D}}(\Omega)$, where $\mathcal{K}_{u_{0},\frac{\partial_{\nn}(F_{0}-u_{0})}{g_{0}}}$ is the nonempty closed convex subset of $\HH^{1}_{\mathrm{D}}(\Omega)$ defined by 
$$
\mathcal{K}_{u_{0},\frac{\partial_{\nn}(F_{0}-u_{0})}{g_{0}}}:=\left\{ w\in \HH^{1}_{\mathrm{D}}(\Omega)\mid w(s)\in \mathrm{K}_{u_{0}(s),\frac{\partial_{\nn}(F_{0}-u_{0})(s)}{\scriptstyle{g_{0}(s)}}} \text{ for almost all }s\in\Gamma_{\mathrm{T}} \right\}.
$$
Moreover, since $\partial_{\nn}F_{0}=0$ on $\Gamma_{\mathrm{T}}$, one gets
\begin{multline*}
\displaystyle \mathcal{K}_{u_{0},\frac{\partial_{\nn}(F_{0}-u_{0})}{g_{0}}}
\\ 
=\left\{ w\in \HH^{1}_{\mathrm{D}}(\Omega)\mid w\leq 0 \text{ a.e.\ on }\Gamma^{u_{0},g_{0}}_{\mathrm{T}_{\mathrm{S-}}}\text{, } w\geq 0 \text{ a.e.\ on }\Gamma^{u_{0},g_{0}}_{\mathrm{T}_{\mathrm{S+}}}\text{, } w=0 \text{ a.e.\ on } \Gamma^{u_{0},g_{0}}_{\mathrm{T}_{\mathrm{S}_{\mathrm{D}}}} \right\},
\end{multline*}
where subsets
$\Gamma^{u_{0},g_{0}}_{\mathrm{T}_{\mathrm{S}_{\mathrm{N}}}}$,
$\Gamma^{u_{0},g_{0}}_{\mathrm{T}_{\mathrm{S}_{\mathrm{D}}}}$,
$\Gamma^{u_{0},g_{0}}_{\mathrm{T}_{\mathrm{S-}}}$ and $\Gamma^{u_{0},g_{0}}_{\mathrm{T}_{\mathrm{S+}}}$ are defined in Theorem~\ref{caractu0derivDNT}. One can easily prove, using norms equivalence between $\left \| \cdot \right \|_{\HH^{1}_{\mathrm{D}}(\Omega)}$ and $\left \| \cdot \right \|_{\HH^{1}(\Omega)}$ on $\HH^{1}_{\mathrm{D}}(\Omega)$, that $\mathrm{D}_{e}^{2}\Phi(u_{0}|F_{0}-u_{0})$ is a proper lower semi-continuous convex function on $\HH^{1}_{\mathrm{D}}(\Omega)$. Moreover, from Hypotheses~\ref{hypo1} and~\ref{hypo2}, we know from Proposition~\ref{AnalysesensiDN} that the map $t\in\mathbb{R}^{+}\mapsto F_{t}\in \HH^{1}_{\mathrm{D}}(\Omega)$ is differentiable at $t=0$, with its derivative $F'_{0}\in \HH^{1}_{\mathrm{D}}(\Omega)$ being the unique solution to the Dirichlet-Neumann  problem~\eqref{PbNeumannDirichletDerivhomogene}. Thus, using~\eqref{uproxGcas2} and Theorem~\ref{TheoABC2018}, the map $t\in\mathbb{R}^{+}\mapsto u_{t}\in \HH^{1}_{\mathrm{D}}(\Omega)$ is differentiable at $t=0$, and its derivative $u_{0}'\in\HH^{1}_{\mathrm{D}}(\Omega)$ satisfies
$$
\displaystyle u_{0}'=\mathrm{prox}_{\mathrm{D}_{e}^{2}\Phi(u_{0}|F_{0}-u_{0})}(F_{0}'),
$$
which, from the definition of the proximal operator (see Proposition~\ref{proxi}), leads to
$$
\displaystyle F_{0}'-u_{0}'\in\partial \mathrm{D}_{e}^{2}\Phi(u_{0}|F_{0}-u_{0})(u_{0}'),
$$
which means that
$$
\displaystyle \dual{ F_{0}'-u'_{0}}{v-u_{0}'}_{\HH^{1}_{\mathrm{D}}(\Omega)}\leq \mathrm{D}_{e}^{2}\Phi(u_{0}|F_{0}-u_{0})(v) -\mathrm{D}_{e}^{2}\Phi(u_{0}|F_{0}-u_{0})(u_{0}'),
$$
for all $v\in \HH^{1}_{\mathrm{D}}(\Omega)$. Hence we get that
\begin{multline*}
    \int_{\Omega}\nabla \left(F'_{0}-u'_{0}\right)\cdot\nabla(v-u'_{0})\\ \leq \mathrm{I}_{\mathcal{K}_{u_{0},\frac{\partial_{\nn}(F_{0}-u_{0})}{g_{0}}}}(v)-\mathrm{I}_{\mathcal{K}_{u_{0},\frac{\partial_{\nn}(F_{0}-u_{0})}{g_{0}}}}(u_{0}')+\int_{\Gamma_{\mathrm{T}}}g'_{0}(s)\frac{\partial_{\nn}(F_{0}-u_{0})(s)}{g_{0}(s)}(v(s)-u_{0}'(s))\mathrm{d}s,
\end{multline*}
for all $v\in \HH^{1}_{\mathrm{D}}(\Omega)$. Moreover, since $\partial_{\nn}F_{0}=0$ on $\Gamma_{\mathrm{T}}$ and $F'_{0}$ is the unique solution to the Dirichlet-Neumann problem~\eqref{PbNeumannDirichletDerivhomogene}, it follows that
\begin{multline*}
    \int_{\Omega}\nabla u'_{0}\cdot\nabla(v-u'_{0}) \geq \mathrm{I}_{\mathcal{K}_{u_{0},\frac{\partial_{\nn}(F_{0}-u_{0})}{g_{0}}}}(u'_{0})-\mathrm{I}_{\mathcal{K}_{u_{0},\frac{\partial_{\nn}(F_{0}-u_{0})}{g_{0}}}}(v)\\+\int_{\Omega}f_{0}'(v-u'_{0})+\int_{\Gamma_{\mathrm{N}}}k_{0}'(v-u_{0}')+\int_{\Gamma_{\mathrm{T}}}g'_{0}(s)\frac{\partial_{\nn}u_{0}(s)}{g_{0}(s)}(v(s)-u_{0}'(s))\mathrm{d}s,
\end{multline*}
for all $v\in\HH^{1}_{\mathrm{D}}(\Omega)$. Hence $u_{0}'\in\mathcal{K}_{u_{0},\frac{\partial_{\nn}(F_{0}-u_{0})}{g_{0}}}$ and
$$
 \int_{\Omega}\nabla u'_{0}\cdot\nabla(v-u'_{0}) \geq \int_{\Omega}f_{0}'(v-u'_{0})+\int_{\Gamma_{\mathrm{N}}}k_{0}'(v-u_{0}')+\int_{\Gamma_{\mathrm{T}}}g'_{0}(s)\frac{\partial_{\nn}u_{0}(s)}{g_{0}(s)}(v(s)-u_{0}'(s))\mathrm{d}s,
$$
 for all $v\in\mathcal{K}_{u_{0},\frac{\partial_{\nn}(F_{0}-u_{0})}{g_{0}}}$. From the weak formulation of the Signorini problem (see Definition~\ref{WeaksolutionSigno}), one deduces that $u'_{0}$ is the unique weak solution to the Signorini problem~\eqref{caractu0DNT}. The proof is complete.
\end{proof}

Roughly speaking, Theorem~\ref{caractu0derivDNT} claims that the first-order approximation in $\HH^{1}(\Omega)$ of the solution $u_{t}$ to the parameterized Tresca friction problem~\eqref{PbNeumannDirichletTrescaPara} is given by $u_{0}+tu'_{0}$ for small values of $t\geq0$, where~$u'_{0}$ is the solution to the Signorini problem~\eqref{caractu0DNT}. In the next section, we illustrate this comment with some numerical simulations, by comparing $u_{t}$ and $u_{0}+tu'_{0}$ in $\HH^{1}$-norm for small values of~$t\geq0$.

\section{Numerical Simulations}\label{simunum}
 In this section we illustrate Theorem~\ref{caractu0derivDNT} with some numerical simulations. The same notations used in the previous section are preserved and, for an explicit two-dimensional example described in Section~\ref{presentexample}, we compare in $\HH^{1}$-norm the solution~$u_{t}$ to the parameterized Tresca friction problem~\eqref{PbNeumannDirichletTrescaPara} with its first-order approximation $u_{0}+tu'_{0}$ for small values of $t\geq0$, where $u'_{0}$ is the solution to the Signorini problem~\eqref{caractu0DNT}.
 
 Numerical simulations are performed using Freefem++ software (see~\cite{10HECHT}) and iterative switching algorithms (see \cite{11AIT}). The results are presented in Section~\ref{numresults}. In a nutshell, let us recall that those iterative switching algorithms operate by checking at each iteration if the boundary conditions are satisfied, and if they are not, by imposing them and restarting the computation (see~\cite[Annexe C p.25]{4ABC} for detailed explanations on those algorithms). The convergence proof of these algorithms is not established yet but their performance are experimentally validated. Let us emphasize that our aim in this section is not to study these algorithms rigorously but only to illustrate the main result of this paper with a simple and easily implementable method to solve the Signorini problem and the Tresca friction problem. Let us mention that there exist other algorithms, like for instance Nitsche methods (see, e.g.,~\cite{14CHO,CHO2}), hybrid methods (see, e.g.,~\cite{14BEL}), mixed methods (see, e.g.,~\cite{HAS}) and more, that are not used here, but could be more efficient for future researches.
\subsection{Mathematical Framework}\label{presentexample}
In this section we describe the example used for numerical simulations. This example is inspired from the one introduced in the paper~\cite{4ABC}. Let $d=2$ and~$\Omega$ be the unit disk of $\R^{2}$, and assume that the decomposition of the boundary $\Gamma=\partial{\Omega}$ is given by
$$
\Gamma=\Gamma_{\mathrm{D}}\cup\Gamma_{\mathrm{N}}\cup\Gamma_{\mathrm{T}},
$$
with
$$
\begin{array}{l}
\Gamma_{\mathrm{D}}:=\left\{(\cos\theta,\sin\theta)\in\Gamma \mid \frac{\pi}{4}<\theta\leq\frac{\pi}{2}\right\}, \\
\Gamma_{\mathrm{N}}:=\left\{(\cos\theta,\sin\theta)\in\Gamma \mid \frac{\pi}{2}<\theta<\frac{3\pi}{4}\right\}, \\
\Gamma_{\mathrm{T}}:=\left\{(\cos\theta,\sin\theta)\in\Gamma \mid -\frac{5\pi}{4}\leq\theta\leq\frac{\pi}{4}\right\}.
\end{array}
$$
Let $f\in\LL^{2}(\Omega)$ be the function defined by 
$$
\fonction{f}{\Omega}{\R}{(x,y)}{\displaystyle f(x,y):= -2\xi(x)-2x\xi'(x)-\frac{1}{2}(x^{2}+y^{2}-1)\xi''(x) ,}
$$
where $\xi$ is given by
$$
\fonction{\xi}{[-1,1]}{\R}{x}{\displaystyle\xi(x):=\left\{ \begin{array}{rcll}
-1	&     & \text{ if } & -1\leq x\leq-\frac{1}{2} , \\
\sin(\pi x)	&   & \text{ if } & -\frac{1}{2}\leq x\leq \frac{1}{2},\\
1	&     & \text{ if } & \frac{1}{2}\leq x\leq1 .
\end{array} 
\right.}
$$
Let us introduce, for all $t\geq0$, the function $f_{t}\in\LL^{2}(\Omega)$ defined by
$$
\fonction{f_{t}}{\Omega}{\R}{(x,y)}{\displaystyle f_{t}(x,y):=\exp(t)f(x,y),}
$$
the function $g_{t}\in\LL^{2}(\Gamma_{\mathrm{T}})$ defined by 
$$
\fonction{g_{t}}{\Gamma_{\mathrm{T}}}{\R}{(x,y)}{\displaystyle g_{t}(x,y):=1+t,}
$$
and also the function $k_{t}\in\LL^{2}(\Gamma_{\mathrm{N}})$ defined by
$$
\fonction{k_{t}}{\Gamma_{\mathrm{N}}}{\R}{(x,y)}{\displaystyle k_{t}(x,y):=(1+t)\xi(x).}
$$
This choice of functions is justified by the fact that we are able to determinate explicitly the solution $u_{0}$ to the parameterized Tresca friction problem~\eqref{PbNeumannDirichletTrescaPara} for $t=0$, which is given by 
$$u_{0}(x,y)=\displaystyle  \frac{1}{2}\left(x^{2}+y^{2}-1\right)\xi(x),
$$ 
for all $(x,y)\in\Omega$. The knowledge of the solution $u_{0}$ reduces errors due to the approximations for the numerical computations of $u'_{0}$ and $u_{0}+tu'_{0}$. Indeed, since $\partial_{\nn}u=\xi$ and $u_{0}=0$ almost everywhere on $\Gamma$, we can directly express the decomposition
$$
\Gamma_{\mathrm{T}}=\Gamma^{u_{0},g_{0}}_{\mathrm{T}_{\mathrm{S}_{\mathrm{N}}}}\cup
\Gamma^{u_{0},g_{0}}_{\mathrm{T}_{\mathrm{S}_{\mathrm{D}}}}\cup
\Gamma^{u_{0},g_{0}}_{\mathrm{T}_{\mathrm{S-}}}\cup\Gamma^{u_{0},g_{0}}_{\mathrm{T}_{\mathrm{S+}}},
$$
which is given by
$$
\begin{array}{l}
\Gamma^{u_{0},g_{0}}_{\mathrm{T}_{\mathrm{S}_{\mathrm{N}}}}=\left\{s\in\Gamma_{\mathrm{T}} \mid u_{0}(s)\neq0\right \}=\emptyset,\\
\Gamma^{u_{0},g_{0}}_{\mathrm{T}_{\mathrm{S}_{\mathrm{D}}}}=\left\{(x,y)\in\Gamma \mid -\frac{1}{2}<x<\frac{1}{2}\right\}\cap\Gamma_{\mathrm{T}}=\left\{(\cos\theta,\sin\theta)\in\Gamma \mid \frac{4\pi}{3}<\theta\leq\frac{5\pi}{3}\right\},\\
\Gamma^{u_{0},g_{0}}_{\mathrm{T}_{\mathrm{S-}}}=\left\{(x,y)\in\Gamma \mid x\geq\frac{1}{2}\right\}\cap\Gamma_{\mathrm{T}}=\left\{(\cos\theta,\sin\theta)\in\Gamma \mid -\frac{\pi}{3}<\theta\leq\frac{\pi}{4}\right\},\\
\Gamma^{u_{0},g_{0}}_{\mathrm{T}_{\mathrm{S+}}}=\left\{(x,y)\in\Gamma \mid x\leq-\frac{1}{2}\right\}\cap\Gamma_{\mathrm{T}}=\left\{(\cos\theta,\sin\theta)\in\Gamma \mid \frac{3\pi}{4}<\theta\leq\frac{4\pi}{3}\right\}.
\end{array}
$$
\begin{figure}[ht]
    \centering
\begin{tikzpicture}\label{figure1}
\draw (0,0) node{$\Omega$};
\draw [color=red, very thick] (1.414,1.414) arc (45:90:2);
\draw [color=blue, very thick](0,2) arc(90:135:2);
\draw [color=brown, very thick](-1.414,1.414) arc(135:240:2);
\draw [color=green, very thick](-1,-1.732) arc(240:300:2);
\draw [color=cyan, very thick](1,-1.732) arc(300:405:2);
\draw (1,1.732) [color=red] node[above]{$\Gamma_{\mathrm{D}}$};
\draw (-1,1.732) [color=blue] node[above]{$\Gamma_{\mathrm{N}}$};
\draw (0,-2) [color=green] node[below]{$\Gamma^{u_{0},g_{0}}_{\mathrm{T}_{\mathrm{S}_{\mathrm{D}}}}$};
\draw (2,0) [color=cyan] node[right]{$\Gamma^{u_{0},g_{0}}_{\mathrm{T}_{\mathrm{S-}}}$};
\draw (-2,0) [color=brown] node[left]{$\Gamma^{u_{0},g_{0}}_{\mathrm{T}_{\mathrm{S+}}}$};
\end{tikzpicture}
    \caption{Unit disk $\Omega$ and its boundary $\Gamma=\Gamma_{\mathrm{D}}\cup\Gamma_{\mathrm{N}}\cup\Gamma_{\mathrm{T}}$, with $\Gamma_{\mathrm{T}}=\Gamma^{u_{0},g_{0}}_{\mathrm{T}_{\mathrm{S}_{\mathrm{N}}}}\cup
\Gamma^{u_{0},g_{0}}_{\mathrm{T}_{\mathrm{S}_{\mathrm{D}}}}\cup
\Gamma^{u_{0},g_{0}}_{\mathrm{T}_{\mathrm{S-}}}\cup\Gamma^{u_{0},g_{0}}_{\mathrm{T}_{\mathrm{S+}}}$.}
    \label{figuredeGamma}
\end{figure}\\
Moreover, since $f'_{0}=f$ in $\LL^{2}(\Omega)$, $k'_{0}=\xi$ in $\LL^{2}(\Gamma_{\mathrm{N}})$ and $g'_{0}=1$ in $\LL^{2}(\Gamma_{\mathrm{T}})$,
we are now in a position to compute numerically $u'_{0}$ and $u_{t}$, and then to compare $u_{t}$ with its first-order approximation~$u_{0}+tu'_{0}$ in~$\HH^{1}$-norm for several small values of $t\geq 0$. 
\subsection{Numerical Results}\label{numresults}
Here we present the numerical results obtained for the two-dimensional example described in Section~\ref{presentexample}. Numerical simulations have been made using P2 finite element method and with a discretization of the boundary of 190 points. We concatenate in Table~\ref{tableausansu} some values of~$\left \| u_{t}-u_{0}-tu_{0}' \right \|_{\HH^{1}(\Omega)}$ for several small values of $t\geq0$. Figure~\ref{Cassansu} gives the representation in lo\-garithmic scale of the maps~$t\in\mathbb{R}^{+}\mapsto \left \| u_{t}-u_{0}-tu_{0}' \right \|_{\HH^{1}(\Omega)}\in\R^{+}$ and $t\in\mathbb{R}^{+}\mapsto t^{2}\in\R^{+}$.
Finally, Figure~\ref{fig:figtrescaCasgt} is the illustration of $u_{t}$ and its first-order approximation $u_{0}+tu'_{0}$ for $t=0.1$.

Roughly speaking, we observe from Figure~\ref{Cassansu} that
$$
\left \| \frac{u_{t}-u_{0}}{t}-u_{0}' \right \|_{\HH^{1}(\Omega)}=O(t),
$$
where $O$ stands for the standard Bachmann--Landau notation, which is in accordance with our main result (Theorem~\ref{caractu0derivDNT}).\color{black}
\begin{table}[!ht]
    \center
    \begin{tabular}[b]{|l|c|c|c|c|c|c|c|c|}
    \hline
    Parameter t  & 0.60 & 0.40 & 0.20 & 0.1 & 0.075 & 0.05 & 0.025 & 0.01  \\
    \hline 
    $\left \| u_{t}-u_{0}-tu_{0}' \right \|_{\HH^{1}(\Omega)}$ & 0.6267 & 0.2558 & 0.0590 & 0.0145 & 0.0083 & 0.0042 & 0.0021 & 0.0022 \\
    \hline
    \end{tabular}
    \caption{$\HH^{1}$-norm of the difference between $u_{t}$ and its first-order approximation $u_{0}+tu_{0}'$ for several small values of $t$.}
    \label{tableausansu}
\end{table}
\begin{figure}
    \centering
    \includegraphics[scale=0.5]{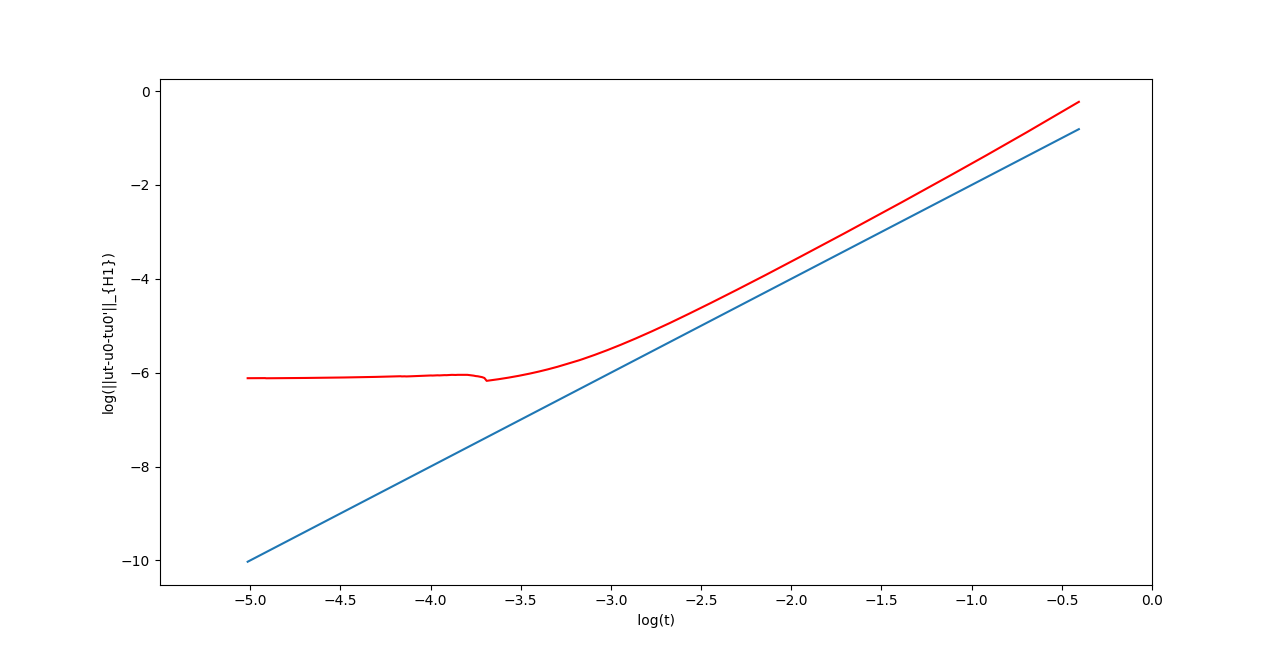}
    \caption{ The representation in logarithmic scale of the map $t\in\R^{+} \mapsto \left \| u_{t}-u_{0}-tu_{0}' \right \|_{\HH^{1}(\Omega)}\in\R^{+}$ (red) and of the map $t\in\R^{+} \mapsto t^{2}\in\R^{+}$ (blue).}
    \label{Cassansu}
\end{figure}
\begin{myRem}
Note that the representation of $\left \| u_{t}-u_{0}-tu_{0}' \right \|_{\HH^{1}(\Omega)}$ with respect to $t$ in logarithmic scale got a threshold for $t\approx0.03$. This is a classical phenomenon due to the numerical approximations we made and the numerical algorithms we used.
\end{myRem}
\begin{figure}[!ht]
    \centering
    \includegraphics[scale=0.35]{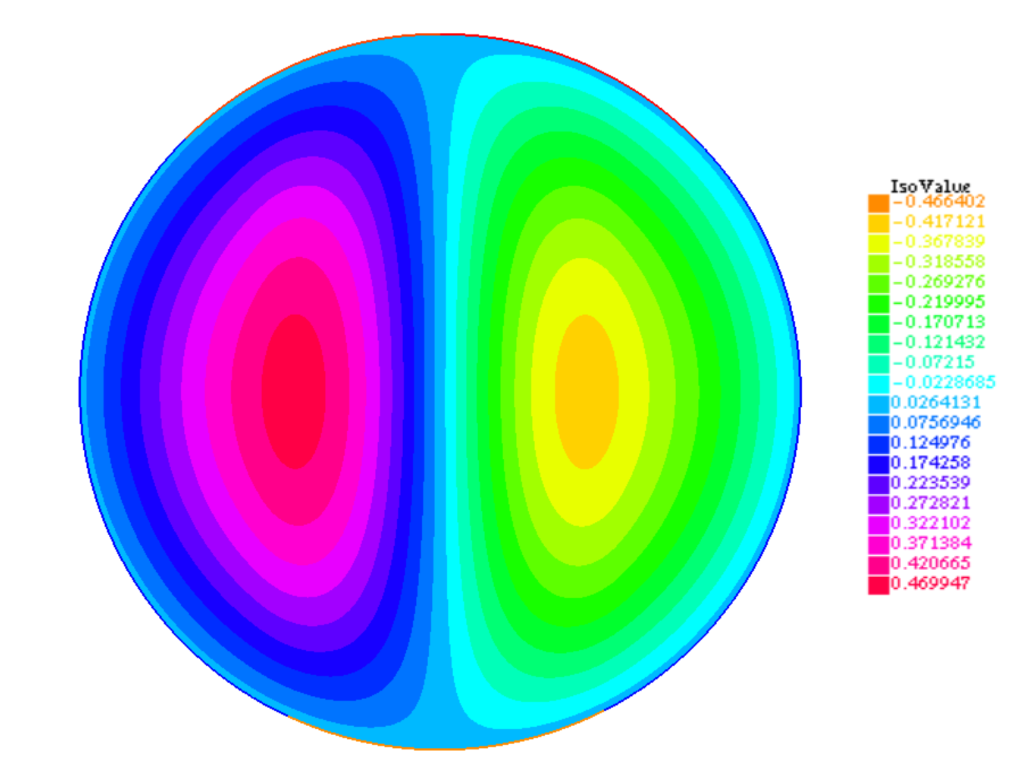}
    \includegraphics[scale=0.57]{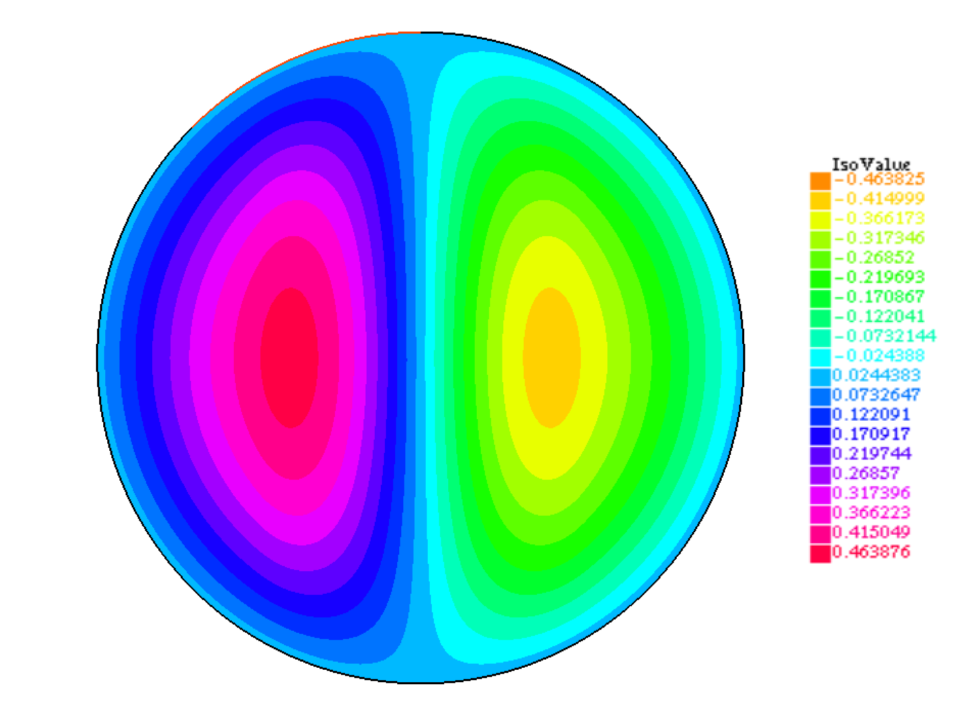}
    \caption{The first figure is the representation of $u_{t}$ and the second its first-order approximation~$u_{0}+~tu_{0}'$ for $t=0.1$.}
    \label{fig:figtrescaCasgt}
\end{figure}

\section{Conclusions}
In this paper we investigated the sensitivity analysis of a scalar mechanical contact problem involving the Tresca's friction law.  This follows the previous paper~\cite{4ABC} where only the right-hand source term was perturbed. In the present work, the friction term associated to the Tresca's friction law was also perturbed which is the main novelty compared to the previous paper. Using tools from convex analysis we proved that the derivative of a parameterized Tresca friction problem is the solution to a problem with Signorini unilateral conditions. This work will be used in order to investigate shape optimization problems involving the Tresca's friction law in a forthcoming article.

\appendix
\section{Sufficient Conditions for the Twice Epi-Differentiability of the Parameterized Tresca Friction Functional}\label{casparticulier}
In this appendix the notations and assumptions introduced in Section~\ref{Mainresult} are preserved. This appendix follows from Remark~\ref{Remarquenotwice}. Our aim here is to prove, in some particular cases which correspond to practical situations, that the parameterized Tresca friction functional $\Phi$ is twice epi-differentiable at $u_{0}$ for $F_{0}-u_{0}\in\partial\Phi(0,\cdot)(u_{0})$, with its second-order epi-derivative given by~\eqref{hypoth1}.
From the characterization of Mosco epi-convergence (see Proposition~\ref{caractMosco}), it is sufficient to prove that, for all~$w\in\HH^{1}_{\mathrm{D}}(\Omega)$, the two conditions
\begin{enumerate}[label=(\roman*)]
    \item for all $(w_{t})_{t>0}\subset\HH^{1}_{\mathrm{D}}(\Omega)$ such that $(w_{t})_{t>0}\rightharpoonup w$ in $\HH^{1}_{\mathrm{D}}(\Omega)$, then 
    $$\mathrm{lim}\inf \Delta_{t}^{2}\Phi(u_{0}|F_{0}-u_{0})(w_{t})\geq \mathrm{I}_{\mathcal{K}_{u_{0},\frac{\partial_{\nn}(F_{0}-u_{0})}{g_{0}}}}(w)+\int_{\Gamma_{\mathrm{T}}}g'_{0}(s)\frac{\partial_{\nn}(F_{0}-u_{0})(s)}{g_{0}(s)}w(s)\mathrm{d}s ;
    $$\label{premiercondit}
    \item there exists $(w_{t})_{t>0}\subset\HH^{1}_{\mathrm{D}}(\Omega)$ such that $(w_{t})_{t>0}\rightarrow w$ in $\HH^{1}_{\mathrm{D}}(\Omega)$ and 
    $$
    \mathrm{lim}\sup \Delta_{t}^{2}\Phi(u_{0}|F_{0}-u_{0})(w_{t})\leq \mathrm{I}_{\mathcal{K}_{u_{0},\frac{\partial_{\nn}(F_{0}-u_{0})}{g_{0}}}}(w)+\int_{\Gamma_{\mathrm{T}}}g'_{0}(s)\frac{\partial_{\nn}(F_{0}-u_{0})(s)}{g_{0}(s)}w(s)\mathrm{d}s ;
    $$
\end{enumerate}
are satisfied.

The condition~\ref{premiercondit} is always satisfied. Indeed, from Proposition~\ref{epidiffoffunctionG}, this condition can be rewritten as
\begin{multline*}
    \mathrm{lim}\inf\int_{\Gamma_{\mathrm{T}}}\Delta_{t}^{2}G(s)(u(s)|\partial_{\nn}(F_{0}-u_{0})(s))(w_{t}(s))\mathrm{d}s\\ \geq \int_{\Gamma_{\mathrm{T}}}\mathrm{D}_{e}^{2}G(s)(u_{0}(s)|\partial_{\nn}(F_{0}-u_{0})(s))(w(s))\mathrm{d}s,
\end{multline*}
which is true thanks to the dense and compact embedding~$\HH^{1}(\Omega)\hookdoubleheadrightarrow \mathrm{L}^{2}(\Gamma)$, to the twice epi-differentiability of the function $G(s)$ for almost all $s\in\Gamma_{\mathrm{T}}$ (see Proposition~\ref{épidiffgabs}) and to the classical Fatou's lemma (see, e.g.,~\cite[Lemma 4.1 p.90]{BREZ}).

The condition (ii) is obviously satisfied if $w\notin\mathcal{K}_{u_{0},\frac{\partial_{\nn}(F_{0}-u_{0})}{g_{0}}}$. Thus, one has only to prove the following assertion:
\begin{enumerate}[label=({\roman*$'$})]
\setcounter{enumi}{1}
\item for all $w\in\mathcal{K}_{u_{0},\frac{\partial_{\nn}(F_{0}-u_{0})}{g_{0}}}$, there exists $(w_{t})_{t>0}\subset\HH^{1}_{\mathrm{D}}(\Omega)$ such that $(w_{t})_{t>0}\rightarrow w$ in $\HH^{1}_{\mathrm{D}}(\Omega)$~and
$$
\mathrm{lim}\sup \Delta_{t}^{2}\Phi(u_{0}|F_{0}-u_{0})(w_{t})\leq \int_{\Gamma_{\mathrm{T}}}g'_{0}(s)\frac{\partial_{\nn}(F_{0}-u_{0})(s)}{g_{0}(s)}w(s)\mathrm{d}s.
$$\label{condrestantefortwice}
\end{enumerate}
Unfortunately we are not able to prove this assertion in a general setting yet, that is without any additional assumption on~$u_{0}$ and on~$\Gamma$, and in any dimension $d\geq1$. Nevertheless, in this appendix, we prove this assertion in some particular cases which correspond to practical situations, providing sufficient conditions. In particular, in the next sections, we consider the additional assumption 
\begin{enumerate}[label=(\Alph*)]
        \item \begin{center}the map $t\in\mathbb{R}^{+}\mapsto g_{t}\in \LL^{2}(\Gamma_{\mathrm{T}})$ is differentiable at $t=0$\label{assumptionuseful}.
         \end{center}
\end{enumerate} 

\subsection{First Example of Sufficient Condition: $u=0$ almost everywhere on~$\Gamma_{\mathrm{T}}$}
In this first example, we assume that $u_{0}=0$ almost everywhere on $\Gamma_{\mathrm{T}}$, therefore~$\Gamma^{u_{0},g_{0}}_{\mathrm{T}_{\mathrm{S}_{\mathrm{N}}}}$ has a null measure. Let $w\in\mathcal{K}_{u_{0},\frac{\partial_{\nn}(F_{0}-u_{0})}{g_{0}}}$. Then, taking the sequence $w_{t}=w$ for all $t>0$, one gets
\begin{multline*}
\Delta_{t}^{2}\Phi(u_{0}|F_{0}-u_{0})(w)=\\\int_{\Gamma^{u_{0},g_{0}}_{\mathrm{T}_{\mathrm{S+}}}\cup\Gamma^{u_{ 0},g_{0}}_{\mathrm{T}_{\mathrm{S-}}}}\frac{g_{t}(s)|u_{0}(s)+t w(s)|-g_{t}(s)|u_{0}(s)|+t\partial_{\nn}(F_{0}-u_{0})(s)w(s)}{t^{2}}\mathrm{d}s \\
= \int_{\Gamma^{u_{0},g_{0}}_{\mathrm{T}_{\mathrm{S+}}}}\frac{g_{t}(s)-g_{0}(s)}{t}w(s)\mathrm{d}s-\int_{\Gamma^{u_{0},g_{0}}_{\mathrm{T}_{\mathrm{S-}}}}\frac{g_{t}(s)-g_{0}(s)}{t}w(s)\mathrm{d}s \\
\longrightarrow \int_{\Gamma_{\mathrm{T}}}g'_{0}(s)\frac{\partial_{\nn}(F_{0}-u_{0})(s)}{g_{0}(s)}w(s)\mathrm{d}s,
\end{multline*}
when $t\rightarrow 0^{+}$ from Assumption~\ref{assumptionuseful}. Therefore Condition~\ref{condrestantefortwice} is satisfied.
\subsection{Second Example of Sufficient Condition: Truncature}
In this second example, we introduce two disjoint subsets of $\Gamma_{\mathrm{T}}$ given by
$$
\Gamma^{u_{0},g_{0}}_{\mathrm{T}_{\mathrm{S}_{\mathrm{N}+}}}:=\left\{s\in\Gamma_{\mathrm{T}} \mid u_{0}(s)>0\right\}
\qquad \mbox{ and } \qquad 
\Gamma^{u_{0},g_{0}}_{\mathrm{T}_{\mathrm{S}_{\mathrm{N}-}}}:=\left\{s\in\Gamma_{\mathrm{T}} \mid  u_{0}(s)<0\right\}.
$$
Hence it follows that $\Gamma^{u_{0},g_{0}}_{\mathrm{T}_{\mathrm{S}_{\mathrm{N}}}}=\Gamma^{u_{0},g_{0}}_{\mathrm{T}_{\mathrm{S}_{\mathrm{N}+}}}\cup\Gamma^{u_{0},g_{0}}_{\mathrm{T}_{\mathrm{S}_{\mathrm{N}-}}}$, $\partial_{\nn}u_{0}=-g_{0}$ almost everywhere on $\Gamma^{u_{0},g_{0}}_{\mathrm{T}_{\mathrm{S}_{\mathrm{N}+}}}$ and that~$\partial_{\nn}u_{0}=g_{0}$ almost everywhere on $\Gamma^{u_{0},g_{0}}_{\mathrm{T}_{\mathrm{S}_{\mathrm{N}-}}}$. Now let us assume that there exists $C>0$ such that~$|u_{0}|\geq C$ on $\Gamma^{u_{0},g_{0}}_{\mathrm{T}_{\mathrm{S}_{\mathrm{N}+}}}\cup\Gamma^{u_{0},g_{0}}_{\mathrm{T}_{\mathrm{S}_{\mathrm{N}-}}}$. Let us consider $w\in\mathcal{K}_{u_{0},\frac{\partial_{\nn}(F_{0}-u_{0})}{g_{0}}}$ and the truncature $w_{t}\in\HH^{1}_{\mathrm{D}}(\Omega)$ of $w$ defined by
$$
w_{t}(x):=
\left\{
\begin{array}{ccll}
\frac{1}{\sqrt{t}} 	&     & \text{ if } w(x)\geq\frac{1}{\sqrt{t}} , \\
w(x) 	&   & \text{ if } |w(x)| \leq\frac{1}{\sqrt{t}}, \\
-\frac{1}{\sqrt{t}} 	&     & \text{ if } w(x)\leq-\frac{1}{\sqrt{t}},
\end{array}
\right.
$$
for almost all $x\in\Omega$ and for all $t>0$. One deduces from Marcus-Mizel theorem (see~\cite{MarcusMizel}) that~$w_{t}\rightarrow w$ in $\HH^{1}_{\mathrm{D}}(\Omega)$ when $t\rightarrow 0^{+}$. Moreover, for all $t\leq C^{2}$, one gets
\begin{multline*}
\Delta_{t}^{2}\Phi(u_{0}|F_{0}-u_{0})(w_{t})=\\\int_{\Gamma^{{u_{0}},g_{0}}_{\mathrm{T}_{\mathrm{S+}}}\cup\Gamma^{u_{0},g_{0}}_{\mathrm{T}_{\mathrm{S}_{\mathrm{N}+}}}}\frac{g_{t}(s)-g_{0}(s)}{t}w_{t}(s)\mathrm{d}s-\int_{\Gamma^{u_{0},g_{0}}_{\mathrm{T}_{\mathrm{S-}}}\cup\Gamma^{u_{0},g_{0}}_{\mathrm{T}_{\mathrm{S}_{\mathrm{N}-}}}}\frac{g_{t}(s)-g_{0}(s)}{t}w_{t}(s)\mathrm{d}s\\\longrightarrow \int_{\Gamma_{\mathrm{T}}}g'_{0}(s)\frac{\partial_{\nn}(F_{0}-u_{0})(s)}{g_{0}(s)}w(s)\mathrm{d}s,
\end{multline*}
when $t\rightarrow0^{+}$ from Assumption~\ref{assumptionuseful}, therefore Condition~\ref{condrestantefortwice} is satisfied.
\subsection{Third Example of Sufficient Condition: Truncature and Dilatation}
In this third example, we take $d=2$ and $\Gamma_{\mathrm{N}}=\emptyset$, and we assume that $u_{0}$ and $\partial_{\nn}u_{0}$ are conti\-nuous on $\Gamma$, and that $\Gamma$ is diffeomorphic to the circle $\mathrm{S}^{1}:=\left\{(x,y)\in\R^{2} \mid x^{2}+y^{2}=1\right\}$. From this last assumption, for simplicity, we assume in the sequel that~$\Gamma=\mathrm{S}^{1}$.
In what follows, the next hypotheses are only useful to simplify the computations. Let us assume that $\Gamma_{\mathrm{T}}=\Gamma^{u_{0},g_{0}}_{\mathrm{T}_{\mathrm{S+}}}\cup\Gamma^{u_{0},g_{0}}_{\mathrm{T}_{\mathrm{S}_{\mathrm{N}+}}}$ (in this particular case, the hypothesis on the continuity of~$\partial_{\nn}u_{0}$ is useless, see Remark~\ref{continuitedednuo}) where~$\Gamma^{u_{0},g_{0}}_{\mathrm{T}_{\mathrm{S}_{\mathrm{N}+}}}$ has already been defined in the previous example, and with the following parameterizations
$$
\displaystyle\Gamma^{u_{0},g_{0}}_{\mathrm{T}_{\mathrm{N}+}}=\left\{ \left(\cos{\theta},\sin{\theta}\right)\in\Gamma \mid \theta\in\left] \gamma_{1},\gamma_{2}\right[ \right\},
$$
$$
\displaystyle\Gamma^{u_{0},g_{0}}_{\mathrm{T}_{\mathrm{S+}}}=\left\{ \left(\cos{\theta},\sin{\theta}\right)\in\Gamma \mid \theta\in\left[ \xi_{1},\gamma_{1}\right]\cup\left[ \gamma_{2},\xi_{2}\right] \right\},
$$
such that $-\pi\leq\xi_{1}<\gamma_{1}<\gamma_{2}<\xi_{2}\leq\pi$ (see Figure~\ref{fig:figuredeGamma}). From the continuity of $u_{0}$, there exists~$c>0$ such that $u_{0}\geq c$ on the set $\{(\cos{\theta},\sin{\theta})\in\Gamma\text{, } \theta\in\left[ \chi_{1},\chi_{2}\right] \}\subset\Gamma^{u_{0},g_{0}}_{\mathrm{T}_{\mathrm{N}+}}$, with $\gamma_{1}<\chi_{1}<\chi_{2}<\gamma_{2}$. Let us consider $\omega_{1}\in\left]\xi_{1},\gamma_{1}\right[$, $\omega_{2}\in\left]\gamma_{2},\xi_{2}\right[$, and also $\alpha_{t}$, $\beta_{t}$ defined, for $t>0$ such that $\sqrt{t}\leq c$, by
$$
\alpha_{t}:=\inf{\left\{ \alpha\in\left[ \gamma_{1},\chi_{1}\right] \mid \forall\theta\in\left[\alpha,\chi_{1}\right]\text{, }u_{0}(\cos{\theta},\sin{\theta})\geq\sqrt{t} \right\}},
$$
$$
\beta_{t}:=\inf{\left\{ \beta\in\left[ \chi_{2},\gamma_{2}\right] \mid \forall\theta\in\left[\chi_{2},\beta\right]\text{, }u_{0}(\cos{\theta},\sin{\theta})\geq\sqrt{t} \right\}}.
$$
From the continuity of $u_{0}$, ones deduces that $\alpha_{t}\rightarrow\gamma_{1}$ and $\beta_{t}\rightarrow\gamma_{2}$ when $t\rightarrow0^{+}$.
\begin{figure}[ht]
    \centering
\begin{tikzpicture}\label{figure1}
\draw (0,0) node{$\Omega$} ;
\draw [color=red, very thick] (1.932,-0.518) arc (345:450:2) ;
\draw [color=blue, very thick](0,2) arc(90:135:2);
\draw [color=green, very thick](-1.414,1.414) arc(135:280:2);
\draw [color=blue, very thick](0.347,-1.97) arc(280:345:2);
\draw (1.8,1.8) [color=red] node[above]{$\Gamma^{u_{0},g_{0}}_{\mathrm{T}_{\mathrm{S}_{\mathrm{N}+}}}$};
\draw (2,0) [color=black] node{$\times$};
\draw (2,0) [color=red] node[right] {$\alpha_{\tau}$};
\draw (0.518,1.932) [color=black] node{$\times$};
\draw (0.518,1.95) [color=red] node[above] {$\beta_{\tau}$};
\draw (1.414,1.414) [color=black] node{$\times$};
\draw (1.414,1.414) [color=red] node[right] {$\chi_{2}$};
\draw (1.879,0.684) [color=black] node{$\times$};
\draw (1.879,0.684) [color=red] node[right] {$\chi_{1}$};
\draw (0,2) [color=black] node{$\times$};
\draw (0,2.1) [color=red] node[above] {$\gamma_{2}$};
\draw (-1.5,1.9) [color=blue] node[above]{$\Gamma^{u_{0},g_{0}}_{\mathrm{T}_{\mathrm{S+}}}$};
\draw (1.932,-0.518) [color=black] node{$\times$};
\draw (2,-0.518) [color=red] node[right] {$\gamma_{1}$};
\draw (0.347,-1.97) [color=black] node{$\times$};
\draw (0.347,-1.97) [color=blue] node[below] {$\xi_{1}$};
\draw (-1.414,1.414) [color=black] node{$\times$};
\draw (-1.414,1.5) [color=blue] node[left] {$\xi_{2}$};
\draw (-0.684,1.879) [color=black] node{$\times$};
\draw (-0.684,1.9) [color=blue] node[above] {$\omega_{2}$};
\draw (1.286,-1.532) [color=black] node{$\times$};
\draw (1.5,-1.6) [color=blue] node[below] {$\omega_{1}$};
\draw (-1.286,-1.53) [color=green] node[left]{$\Gamma_{\mathrm{N}}\cup\Gamma_{\mathrm{D}}$};
\end{tikzpicture}
    \caption{Illustration of the boundary $\Gamma$}
    \label{fig:figuredeGamma}
\end{figure}
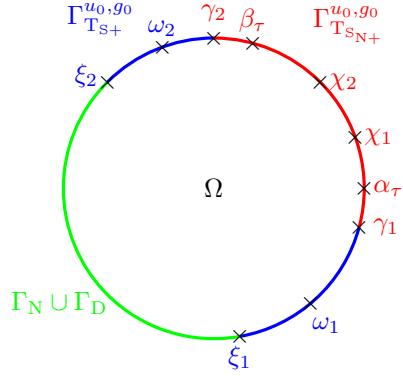

Let $w\in\mathcal{K}_{u_{0},\frac{\partial_{\nn}(F_{0}-u_{0})}{g_{0}}}$, and let $y_{t}\in\HH^{1}_{\mathrm{D}}(\Omega)$ be the truncature of $w$ given by
$$
y_{t}(x):=
\left\{
\begin{array}{ccll}
\frac{1}{\sqrt{t}} 	&     & \text{ if } w(x)\geq\frac{1}{\sqrt{t}} , \\
w(x) 	&   & \text{ if } |w(x)| \leq\frac{1}{\sqrt{t}}, \\
-\frac{1}{\sqrt{t}} 	&     & \text{ if } w(x)\leq-\frac{1}{\sqrt{t}},
\end{array}
\right.
$$
for almost all $x\in\Omega$ and for all $t>0$. As in the previous section, one gets $y_{t}\rightarrow w$ in $\HH^{1}_{\mathrm{D}}(\Omega)$, and thus~$y_{t|\Gamma}\rightarrow w_{|\Gamma}$ in $\HH^{1/2}(\Gamma)$ when~$t\rightarrow0^{+}$.
Let us consider, for $t>0$ sufficiently small, the dilatation~$z_{t}:=y_{t|\Gamma} \circ d_{t}$ of $y_{t|\Gamma}$, with~$d_{t}$ given by
$$
\fonction{d_{t}}{\Gamma}{\Gamma}{(x_{1},x_{2})}{\left\{
\begin{array}{ccll}
\left(x_{1},x_{2}\right) &  & \text{ if }\left(x_{1},x_{2}\right) \in\Gamma_{\mathrm{N}}\cup\Gamma_{\mathrm{D}} , \\
\left(x_{1},x_{2}\right) &  & \text{ if }\left(x_{1},x_{2}\right) \in\Gamma^{u_{0},g_{0}}_{\mathrm{T}_{\mathrm{S+}}}\textbackslash\left\{\left(\cos{\theta},\sin\theta\right)\text{, } \theta\in\left[\omega_{1},\omega_{2}\right]\right\}, \\
d^{\omega_{1},\alpha_{t}}(x_{1},x_{2}) 	&   & \text{ if } 2\arctan{\left(\frac{x_{2}}{x_{1}+1}\right)}\in\left[\omega_{1},\alpha_{t}\right], \\
d^{\alpha_{t},\chi_{1}}(x_{1},x_{2}) 	&   & \text{ if } 2\arctan{\left(\frac{x_{2}}{x_{1}+1}\right)}\in\left[\alpha_{t},\chi_{1}\right], \\
\left(x_{1},x_{2}\right) 	&     & \text{ if } 2\arctan{\left(\frac{x_{2}}{x_{1}+1}\right)}\in\left[\chi_{1},\chi_{2}\right], \\
d^{\chi_{2},\beta_{t}}(x_{1},x_{2}) 	&   & \text{ if } 2\arctan{\left(\frac{x_{2}}{x_{1}+1}\right)}\in\left[\chi_{2},\beta_{t}\right], \\
d^{\beta_{t},\omega_{2}}(x_{1},x_{2}) 	&   & \text{ if } 2\arctan{\left(\frac{x_{2}}{x_{1}+1}\right)}\in\left[\beta_{t}, \omega_{2}\right], \\
\end{array}
\right.}
$$
where 
$$
d^{\omega_{1},\alpha_{t}}(x_{1},x_{2})=\left(\cos{\theta^{\omega_{1},\alpha_{t}}}, \sin {\theta^{\omega_{1},\alpha_{t}}}\right), \text{ with } \theta^{\omega_{1},\alpha_{t}}=\frac{\left(\gamma_{1}-\omega_{1}\right)2\arctan{\left(\frac{x_{2}}{x_{1}+1}\right)}+w_{1}\left(\alpha_{t}-\gamma_{1}\right)}{\alpha_{t}-\omega_{1}},
$$
$$
d^{\alpha_{t},\chi_{1}}(x_{1},x_{2})=\left(\cos{\theta^{\alpha_{t},\chi_{1}}}, \sin {\theta^{\alpha_{t},\chi_{1}}}\right), \text{ with } \theta^{\alpha_{t},\chi_{1}}=\frac{\left(\chi_{1}-\gamma_{1}\right)2\arctan{\left(\frac{x_{2}}{x_{1}+1}\right)}+\chi_{1}\left(\gamma_{1}-\alpha_{t}\right)}{\chi_{1}-\alpha_{t}},
$$
$$
d^{\chi_{2},\beta_{t}}(x_{1},x_{2})=\left(\cos{\theta^{\chi_{2},\beta_{t}}}, \sin {\theta^{\chi_{2},\beta_{t}}}\right), \text{ with } \theta^{\chi_{2},\beta_{t}}=\frac{\left(\gamma_{2}-\chi_{2}\right)2\arctan{\left(\frac{x_{2}}{x_{1}+1}\right)}+\chi_{2}\left(\beta_{t}-\gamma_{2}\right)}{\beta_{t}-\chi_{2}},
$$
$$
d^{\beta_{t},\omega_{2}}(x_{1},x_{2})=\left(\cos{\theta^{\beta_{t},\omega_{2}}}, \sin {\theta^{\beta_{t},\omega_{2}}}\right), \text{ with } \theta^{\beta_{t},\omega_{2}}=\frac{\left(\omega_{2}-\gamma_{2}\right)2\arctan{\left(\frac{x_{2}}{x_{1}+1}\right)}+\omega_{2}\left(\gamma_{2}-\beta_{t}\right)}{\omega_{2}-\beta_{t}}.
$$
Note that, since $-\pi\leq\xi_{1}<\omega_{1}<\omega_{2}<\xi_{2}\leq\pi$ (see Remark~\ref{anglepi}), then $d_{t}$ is a well-defined bijective Lipschitz continuous map, and its inverse is also a bijective Lipschitz continuous map. Thus it follows that $z_{t}\in\HH^{1/2}(\Gamma)$ and also $z_{t}\rightarrow w_{|\Gamma}$ in $\HH^{1/2}(\Gamma)$ when $t\rightarrow0^{+}$. Then, for $t>0$ sufficiently small, we denote by $w_{t}\in\HH^{1}_{\mathrm{D}}(\Omega)$ a lift of $z_{t}\in\HH^{1/2}(\Gamma)$, such that $w_{t}\rightarrow w$ in $\HH^{1}_{\mathrm{D}}(\Omega)$ when~$t\rightarrow0^{+}$. Therefore, by denoting 
$$
m_{t}(s)=\frac{g_{t}(s)|u_{0}(s)+t w_{t}(s)|-g_{t}(s)|u_{0}(s)|-t\partial_{\nn}(F_{0}-u_{0})(s)w_{t}(s)}{t^{2}},
$$
for $t>0$ sufficiently small and for almost all $s\in\Gamma_{\mathrm{T}}$, it follows that
\begin{multline*}
\Delta_{t}^{2}\Phi(u_{0}|F_{0}-u_{0})(w_{t}) = \int_{\left\{ \left(\cos{\theta},\sin{\theta}\right)\text{, } \theta\in\left[\xi_{1},\omega_{1}\right]\right\}}m_{t}(s)\mathrm{d}s \\
+\int_{\left\{ \left(\cos{\theta},\sin{\theta}\right)\text{, } \theta\in\left[\omega_{1},\alpha_{t}\right]\right\}}m_{t}(s)\mathrm{d}s
+\int_{\left\{ \left(\cos{\theta},\sin{\theta}\right)\text{, } \theta\in\left[\alpha_{t},\chi_{1}\right]\right\}}m_{t}(s)\mathrm{d}s \\
+\int_{\left\{ \left(\cos{\theta},\sin{\theta}\right)\text{, } \theta\in\left[\chi_{1},\chi_{2}\right]\right\}}m_{t}(s)\mathrm{d}s 
+\int_{\left\{ \left(\cos{\theta},\sin{\theta}\right)\text{, } \theta\in\left[\chi_{2},\beta_{t}\right]\right\}}m_{t}(s)\mathrm{d}s \\
+\int_{\left\{ \left(\cos{\theta},\sin{\theta}\right)\text{, } \theta\in\left[\beta_{t},\omega_{2}\right]\right\}}m_{t}(s)\mathrm{d}s+\int_{\left\{ \left(\cos{\theta},\sin{\theta}\right)\text{, } \theta\in\left[\omega_{2},\xi_{2}\right]\right\}}m_{t}(s)\mathrm{d}s.
\end{multline*}
Then, from the definition of $d_{t}$ and Assumption~\ref{assumptionuseful}, one deduces that
$$
\Delta_{t}^{2}\Phi(u_{0}|F_{0}-u_{0})(w_{t})\longrightarrow \int_{\Gamma_{\mathrm{T}}}g'_{0}(s)\frac{\partial_{\nn}(F_{0}-u_{0})(s)}{g_{0}(s)}w(s)\mathrm{d}s,
$$
when $t\rightarrow0^{+}$, and thus Condition~\ref{condrestantefortwice} is satisfied. 
\begin{myRem}\label{continuitedednuo}
In the case where $\Gamma_{\mathrm{T}}=\Gamma^{u_{0},g_{0}}_{\mathrm{T}_{\mathrm{S+}}}\cup\Gamma^{u_{0},g_{0}}_{\mathrm{T}_{\mathrm{S}_{\mathrm{N}+}}}$, the hypothesis $\partial_{\nn}u_{0}$ continuous on $\Gamma$ is useless. Nevertheless, in the general case $\Gamma_{\mathrm{T}}=\Gamma^{u_{0},g_{0}}_{\mathrm{T}_{\mathrm{S}_{\mathrm{N}+}}}\cup\Gamma^{u_{0},g_{0}}_{\mathrm{T}_{\mathrm{S}_{\mathrm{N}-}}}\cup
\Gamma^{u_{0},g_{0}}_{\mathrm{T}_{\mathrm{S}_{\mathrm{D}}}}\cup\Gamma^{u_{0},g_{0}}_{\mathrm{T}_{\mathrm{S-}}}\cup\Gamma^{u_{0},g_{0}}_{\mathrm{T}_{\mathrm{S+}}}$, the hypotheses~$u_{0}$ and~$\partial_{\nn}u_{0}$ continuous on $\Gamma$ is sufficient to get the twice epi-differentiability of the parameterized Tresca friction functional: a part of $\Gamma^{u_{0},g_{0}}_{\mathrm{T}_{\mathrm{S-}}}$ (resp.\ $\Gamma^{u_{0},g_{0}}_{\mathrm{T}_{\mathrm{S+}}}$, resp.\ $\Gamma^{u_{0},g_{0}}_{\mathrm{T}_{\mathrm{S}_{\mathrm{N}-}}}$)  is never side to side with a part of $\Gamma^{u_{0},g_{0}}_{\mathrm{T}_{\mathrm{S}_{\mathrm{N}+}}}$ (resp.\ $\Gamma^{u_{0},g_{0}}_{\mathrm{T}_{\mathrm{S}_{\mathrm{N}-}}}$, resp.\ $\Gamma^{u_{0},g_{0}}_{\mathrm{T}_{\mathrm{S}_{\mathrm{N}+}}}$), and thus, using an appropriate dilatation, one can obtain the same result.
\end{myRem}
\begin{myRem}\label{anglepi}
The hypothesis on the angles
$$-\pi\leq\xi_{1}<\omega_{1}<\gamma_{1}<\gamma_{2}<\omega_{2}<\xi_{2}\leq\pi,
$$ avoids the problem of the definition of $d_{t}$ for the point $(x_{1},x_{2})=(-1,0)$. But, in a more general case, since $\Gamma_{\mathrm{D}}$ has a positive measure, it is always possible to translate the angles in order to overcome this difficulty and get a well-defined dilatation $d_{t}$.
\end{myRem}
\begin{myRem}
 The assumption $\Gamma_{\mathrm{N}}=\emptyset$ can be replaced by the assumption that $\Gamma_{\mathrm{N}}$ is never side to side with $\Gamma^{u_{0},g_{0}}_{\mathrm{T}_{\mathrm{S}_{\mathrm{N}+}}}$  and $\Gamma^{u_{0},g_{0}}_{\mathrm{T}_{\mathrm{S}_{\mathrm{N}-}}}$. Without one of those assumptions, the dilatation may not work.
\end{myRem}

\bibliographystyle{abbrv}
\bibliography{biblio}
\end{document}